      \theoremstyle{plain}
      \newtheorem{theorem}{Theorem}[section]
      \newtheorem{lemma}[theorem]{Lemma}
      \newtheorem{corollary}[theorem]{Corollary}
\newtheorem{proposition}[theorem]{Proposition}
      \theoremstyle{definition}
      \theoremstyle{remark}
      \newtheorem{remark}[theorem]{Remark}
      \def\@setcopyright{}
      \def\serieslogo@{}
\newcommand{\R}{\mathbb{R}}
\newcommand{\C}{\mathbb{C}}
\renewcommand{\l}{\lambda}
\renewcommand{\d}{\delta}
\newcommand{\g}{\gamma}
\renewcommand{\a}{\alpha}
\newcommand{\e}{\varepsilon}
\newcommand{\s}{\sigma}
\renewcommand{\b}{\beta}
\newcommand{\lv}{\lvert}
\newcommand{\rv}{\rvert}
\newcommand{\lb}{\left(}
\newcommand{\rb}{\right)}
\newcommand{\GAi}{\textbf{(GA)$_1$}}
\newcommand{\GAii}{\textbf{(GA)$_2$}}
\newcommand{\map}[3] {#1:\, #2\, \rightarrow \, #3}
\newcommand{\Ai}{\operatorname{Ai}}
\renewcommand{\O}{\mathcal{O}}
\newcommand{\supp}{\operatorname{supp}}
\newcommand{\dopp}[1]{\mathds{#1}}
\renewcommand{\Re}{\operatorname{Re}}
\renewcommand{\Im}{\operatorname{Im}}
\newcommand{\E}{\mathbb{E}}
\newcommand{\U}{\mathcal{U}}
\newcommand{\lee}{\big\{}
\newcommand{\ree}{\big\}}
\newcommand{\lbb}{\big(}
\newcommand{\rbb}{\big)}
\newcommand{\Lip}{\textup{Lip}}
\newcommand{\F}{\widehat}
\newcommand{\lvb}{\big\lvert}
\newcommand{\rvb}{\big\rvert}
\renewcommand{\k}{\kappa}
\newcommand{\KAi}{K_{\textup{Ai}}}
\newcommand{\N}{\mathbb{N}}
\theoremstyle{plain}
\begin{document}

   \author{Kristina Schubert}
   \address{Inst. for Math. Stat., Univ. M\"unster, Orl\'{e}ans-Ring 10, 48149 M\"unster, Germany}
   \email{kristina.schubert@uni-muenster.de}

   \author{Martin Venker}
   \address{Fac. of Mathematics, Univ. Bielefeld, P.O.Box 100131, 33501 Bielefeld, Germany}
   \email{mvenker@math.uni-bielefeld.de}

   \title[Empirical Spacings of Unfolded Eigenvalues]{Empirical Spacings of Unfolded Eigenvalues}

   \begin{abstract}
We study random points on the real line generated by the eigenvalues in
unitary invariant random matrix ensembles or by more general repulsive
particle systems.
As the number of points tends to infinity, we prove convergence of the
empirical distribution of nearest neighbor spacings. 
We extend existing results for the spacing distribution in two ways. 
On the one hand, we believe the empirical distribution to be of more practical
relevance than the so far considered expected distribution.
On the other hand, 
we use the unfolding, a non-linear rescaling, which transforms the ensemble such that the  density of particles is asymptotically constant. 
This allows to consider all empirical spacings, where previous results were restricted to a tiny fraction of the particles. 
Moreover, we prove bounds on the rates of convergence. 
The main ingredient for the 
proof, a strong bulk universality result for correlation functions in the unfolded setting including optimal 
rates, should be of independent interest.   
\end{abstract}



 \thanks{The second author has been partially supported by 
the SFB 701.}


   \maketitle

\setcounter{equation}{0}

  \date{\today}
%


\section{Introduction}

The universal behaviour of eigenvalue statistics of random matrices has attracted much interest over the 
last 
decades. Although random matrices have already been studied by Wishart \cite{Wishart} in the 1920s, the development of the theory was promoted in the 1950s by 
Wigner \cite{Wigner},  who used eigenvalues of random matrices to model spectra of complex nuclei. Montgomery's  surprising discovery \cite{Montgomery} that zeros of 
the Riemann zeta function behave statistically as eigenvalues of random matrices, led to a further increase of interest. In 
recent years, the belief has emerged that  limit laws obtained in random matrix theory are also  ubiquitous in large systems of 
strongly correlated particles on the real line. One instance of this belief is the Bohigas-Giannoni-Schmit conjecture 
\cite{BGS}, stating that level spacings of quantum systems with classically chaotic motion should be given by random matrix laws. 
The 
ubiquity of certain limit laws has been established within Random Matrix Theory (RMT) as the universality phenomenon, which means 
that for large matrix 
sizes many eigenvalue statistics exhibit the same limit distributions for essentially different matrix models, provided these models share the same symmetry 
(e.g.~real-symmetry, complex-Hermiticity etc.). These universal limits usually arise for gap probabilities, spacing statistics or 
correlations of close eigenvalues in the bulk or at the edge of the spectrum if the mean spacing between consecutive eigenvalues is one. 

This paper is motivated by the following problem: Assume that  we consider a complicated (real-world) system. Based on a data set of real values obtained as a particular realization of that 
system, we want to study the 
appearance of universal RMT laws. The central question is how to detect such a universal behavior.

From a practical point of 
view, the easiest and most common statistic to consider is the empirical nearest-neighbor spacing distribution: for an ordered
$N$-tuple $x_1\leq\dots\leq x_N$ and an interval $I_N$ we denote by $\s(I_N,x)$  the counting measure 
of the 
nearest neighbor spacings in $I_N$ as
\begin{align}\label{defspacings}
 \s(I_N,x):=\sum_{x_j,x_{j+1}\in I_N} \d_{x_{j+1}-x_j}.
\end{align}
For a non-ordered $x$, we may define $\s(I_N,x):=\s(I_N,\bar{x})$, where $\bar{x}$ is an ordered tuple  build from the 
elements of $x$.
The empirical nearest-neighbor spacing distribution of $x$ in $I_N$ is then given by the probability measure
\begin{align*}
\hat{\s}(I_N,x):=\frac{1}{\int_0^\infty d\s(I_N,x)} \s(I_N,x).
\end{align*}
If $\int_0^\infty d\s(I_N,x)=0$, then we define $\hat{\s}(I_N,x)$ as an arbitrary probability measure on $\R$.

Here, $\hat{\s}(I_N,x)$ represents a histogram of the spacings from the data $x$ and has as such a high practical relevance. Indeed, 
histograms of spacings obtained in numerous (real-world) systems have been compared to limit distributions from RMT, ranging from 
level spacings in nuclear physics (see Mehta's classical book \cite{Mehta} and the references therein) to bus waiting times in 
certain Mexican cities \cite{KrbalekSeba}.

Despite its relevance, results on \textit{empirical} spacings in 
otherwise well-understood random 
matrix ensembles are surprisingly sparse. 
Most results in the literature are available for the \textit{expected} spacing distribution instead of empirical 
spacings. Briefly speaking, the expected spacing distribution is obtained by averaging over 
all realizations of $\s(I_N,x)$ or $\hat{\s}(I_N,x)$. A more formal definition and discussion is given below.
This preference of the expected spacing distribution is partly due to its
direct relation to correlation 
functions (marginals of fixed dimension).
For important classes of random matrices, these have particularly nice forms. This led to the convention of 
establishing local universality in terms of correlation functions.
However, to deduce strong universality results of empirical spacing statistics, quite strong forms of 
convergence of the correlation functions are needed, e.g.~uniformity in intervals \textit{growing} with $N$. These requirements are often not met 
by standard formulations of universality results.

In this paper, we prove the convergence of the empirical spacing distribution of unfolded eigenvalues or more general particles on 
the real line to a universal distribution, the Gaudin distribution. The unfolding basically consists of applying the limiting 
spectral distribution function to the eigenvalues/particles. This non-linear rescaling transforms the limiting spectral measure into 
the uniform measure on $[0,1]$ and allows for considering spacings of eigenvalues/particles from $I_N$ of macroscopic length, even 
the whole spectrum.
Our main theorem, Theorem \ref{thrmspacings}, states the uniform convergence of the distribution function of $\hat{\s}(I_N,x)$ towards the Gaudin distribution function $G$ in mean, i.e.~
\begin{align}
\label{first_statement_of_result}
\lim_{N \to \infty}\E_N\left( \sup_{s \in \mathbb R} \left| \int_0^s d\hat{\s}(I_N,x) - G(s) 
\right|\right)=  0.
\end{align}
We also obtain rates of convergence in terms of the length of the interval $I_N$.

Let us define the two models of this paper. 
In the first model, we consider the eigenvalues of Hermitian matrices from unitary invariant ensembles: Given functions $V,f: J\to 
\mathbb R$ on $J=[L_-,L_+]\cap\R$, which 
is a finite or infinite interval  ($- \infty  \leq L_- < L_+ \leq \infty$, for
precise assumptions see \GAi), we define a density on $\R^N$ by
\begin{align}
\label{density_P}
P_{N,V,f}(x):= \frac{1}{Z_{N,V,f}} \prod_{1 \leq i<j\leq N} \lv x_i-x_j\rv^2 e^{- N\sum_{j=1}^NV(x_j)+\sum_{j=1}^Nf(x_j)}
\dopp1_{J}(x_j).
\end{align}
If $f=0$, then we write $P_{N,V}$ instead of $P_{N,V,0}$. We will slightly abuse notation by not using different 
symbols for the 
measure and its density.
$P_{N,V,f}$ is the joint density of the eigenvalues of a random Hermitian matrix whose distribution has a density proportional to
$\exp(-\textup{Tr}(NV(M)+f(M)))$ with respect to the ``Lebesgue measure'' $dM=\prod_{j=1}^N \mathrm dM_{jj} \prod_{1 \leq j <k \leq 
N}
\mathrm dM_{jk}^R \mathrm dM_{jk}^I$ on the space of $N\times N$ Hermitian matrices $M$ with the property that all eigenvalues of 
$M$ lie in $J$. Most prominent in this class and arguably in 
the entire
RMT is the Gaussian unitary ensemble (GUE), which is obtained by choosing $V(t)=t^2, f=0$ and $J=\R$.

As a second model, we will  consider ensembles recently studied by G\"otze and the second author in \cite{GoetzeVenker}, which
generalize
 \eqref{density_P} by allowing for different interactions between particles.
Given $\map{Q,h}{\R}{\R}$ smooth (see \GAii\, for our assumptions), we define
\begin{align}\label{densityRPS}
 P_{N,Q}^h(x):=\frac{1}{Z_{N,Q}^h}\prod_{1 \leq i<j\leq N} \lv x_i-x_j\rv^2e^{-h(x_i-x_j)} e^{- 
N\sum_{j=1}^NQ(x_j)}.
\end{align}
We will call $P_{N,Q}^h$ a repulsive particle system. As $h$ is smooth, the densities $P_{N,Q}^h$ and $P_{N,V,f}$ vanish at the
same order if two particles approach each other. It is conjectured (cf. \cite{GoetzeVenker}) that the universal local limit 
laws
should only depend on the strength of the repulsion but be independent of the interaction at a non-zero distance and of the external
field $Q$.

Returning to spacings, let us review known results in RMT. The spacing distribution has been 
understood best for a particular ensemble of random unitary matrices, the circular unitary ensemble (CUE). It was introduced by 
Dyson \cite{Dyson} as a Haar distributed unitary matrix, which may be seen as having a uniform distribution on the group of $N\times 
N$ unitary matrices. The eigenvalues  lie  on the unit circle and have a joint density proportional to
\begin{align}
 \prod_{j<l}\lv e^{i\theta_j}-e^{i\theta_l}\rv^2,\label{CUE}
\end{align}
where $\theta_1,\dots,\theta_N\in[0,2\pi)$. Note the similarity in the interaction of \eqref{CUE} and \eqref{density_P}. It 
has been shown  by Katz and Sarnak \cite{KatzSarnak} that for some limiting function $G$ we have
\begin{align}
 \lim_{N\to\infty}\E_{\textup{CUE,N}}\sup_{s\in\R}\left\lvert \int_0^s\frac{2\pi}{\lv 
I_N\rv}d\s(I_N,(N/2\pi)\theta)-G(s)\right\rvert=\O\lb \lv I_N\rv^{-1/6+\e}\rb\label{KSresult}
\end{align}
for any $\e>0$, where $I_N=[0,2\pi N)$. 
Soshnikov \cite{Soshnikov} proved that  \eqref{KSresult} holds with a 
rate of $\O((\lv I_N\rv)^{-1/2+\e})$ for any $\e>0$, where $I_N\subset[0,2\pi N)$ is such that $\lv I_N\rv\to\infty$ as 
$N\to\infty$, in expectation as in \eqref{KSresult} and also almost surely. Furthermore, he 
proved that the fluctuations around the limit $G(s)$ are Gaussian.

In \eqref{KSresult},
\begin{align*}
 G(s):=\sum_{k\geq2}\frac{(-1)^k}{(k-1)!}\int_{[0,s]^k}\det\left[(S(z_i-z_j))_{1\leq i,j\leq k}\right]|_{z_1=0}dz_2\dots dz_k
\end{align*}
is the distribution function of the so-called Gaudin distribution and $S$ is the sine kernel,
\begin{align}
\label{sine_kernel}
 S(t-s):=\frac{\sin(\pi(t-s))}{\pi(t-s)}.
\end{align}
This probability measure had already been studied before in the physics literature (see e.g. \cite{Mehta} for references). In 
particular, the density $G'(s)$ is given as the second derivative of the Fredholm determinant of the integral operator with the 
sine kernel on $L^2((0,s))$.
Although this density does not seem to have a nice closed form expression, for many practical purposes it is sufficient to 
consider the so-called Wigner surmise instead, i.e.~$ p_2(s):=\frac{32}{\pi^2}s^2e^{-\frac\pi4 s^2}$ provides a good approximation 
to $G'(s)$.

Before we can further review   results on the spacing distribution for invariant ensembles, we first need to introduce the notion 
of the equilibrium measure in order to rescale the particles. It is 
well-known that
under very mild assumptions on $V$ and $J$, there is a measure $\mu_V$ on $\R$ with compact support that is the weak limit of the
expected empirical spectral distribution of $P_{N,V}$, i.e. for all continuous and bounded $\map{g}{\R}{\R}$
\begin{align}
 \lim_{N\to\infty}\E_{N,V}\frac1N\sum_{j=1}^Ng(x_j)=\int gd\mu_V.\label{LLN}
\end{align}
Here $\E_{N,V}$ denotes the expectation w.r.t.~$P_{N,V}$. The measure $\mu_V$ is the unique probability measure among all Borel
measures on  $J$ which minimizes the functional
\begin{align*}
 \mu\mapsto \int\int\lv t-s\rv^{-1}d\mu(t)d\mu(s)+\int V(t)d\mu(t).
\end{align*}
 Throughout our paper we will assume that $V$ is convex and real-analytic on a neighborhood
of the support of $\mu_V$ (see \GAi). Then $\mu_V$ has a density, which we will abusing notation also denote by
$\mu_V$, strictly positive in the interior of the support of $\mu_V$.

In invariant ensembles the results for the spacing distribution are by far weaker than for circular ensembles.
For instance, until the recent \cite{Schubert} by 
the first author, only the absolutely continuous intensity measure $\E\s(I_N,x)$ had been considered for these ensembles. We recall that if $x$ is random, then 
$\s(I_N,x)$ is a 
random measure and its intensity measure is defined as 
\begin{align}
 \int_B d\E\s(I_N,x):=\E \int_Bd\s(I_N,x)\label{intensity}
\end{align}
for any measurable set $B$ and $\E$ denotes expectation w.r.t.~the probability measure underlying the random variable $x$. We will 
call this measure the expected spacing measure.

To our knowledge, the first rigorous result on the spacing distribution for unitary invariant matrix ensembles 
is due to Deift et al.~\cite{Deiftetal99}. Let $a\in \supp(\mu_V)^\circ$ and $t_N>0$ such that $t_N\to\infty$ and $t_N/N\to0$ as 
$N\to\infty$. Setting $I_N=[a-t_N,a+t_N]$, it has been shown in \cite{Deiftetal99} following the method of Katz and Sarnak
that for real-analytic $V$ and $f \equiv 0$ in 
(\ref{density_P}), we have
\begin{align}
 \lim_{N\to\infty}\frac{1}{|I_N|\mu_V(a)}\int_0^s d\E_{N,V}\s\big(I_N,N\mu_V(a)x\big)
=G(s).\label{oldresult}
\end{align}

We observe that  \eqref{oldresult} clearly shows universality, as the r.h.s.~of \eqref{oldresult} does not depend on $V$.  Let us 
also remark that \eqref{oldresult} expresses the weak convergence of the distribution function of the asymptotically normalized 
expected spacing distribution to the Gaudin distribution. Furthermore, due to the continuity of $G$, \eqref{oldresult} actually 
holds uniformly in $s$.

An analogous result for certain Hermitian Wigner matrices was proved by Johansson in \cite{Johansson01}. Universality was proved 
for large classes of Wigner matrices and invariant ensembles in terms of the expected 
spacing measures by Bourgade, Erd\H{o}s, Schlein, Yau, Yin et al. (see \cite{BEY1} and the references therein). In 
particular, they show vague convergence of the asymptotically normalized 
expected spacing measure. Moreover, the  limiting distribution has been proved to be universal for large classes of real-symmetric, 
Hermitian or quaternionic 
self-dual random matrices as well as for general $\b$-ensembles, which are variants of \eqref{density_P} in which a parameter 
$\b>0$ replaces the exponent 2. The universal limit depends on the symmetry type or on $\b$, respectively.

Recently, there has been quite some interest in the distribution of a single spacing. This was initiated by  Tao, who 
proved in \cite{Tao13} that the Gaudin distribution is also the limit in law of a single spacing in the bulk of the spectrum, say 
of $x_{\lfloor N/2\rfloor+1}-x_{\lfloor N/2\rfloor}$. This shows that for expected spacings, an average over an increasing number 
of spacings as in \eqref{oldresult} is not necessary to obtain the limiting distribution. Tao proved this for Hermitian Wigner 
matrices and the 
result was later extended in \cite{ErdosYau} and \cite{Guionnet} to all symmetry classes and $\b$-ensembles. Moreover, the 
results in \cite{ErdosYau} and \cite{Guionnet} allow to consider statistics of the form (for ordered $x_i$)
\begin{align}
 \E_{N,V}\frac1{\lv I\rv}\sum_{i\in I} g(\mu_{V,i} N(x_{i+1}-x_i)),\label{singles}
\end{align}
where $\mu_{V,i}:=\mu_V(q_i)$ and $q_i$ is the $i/N$-quantile of the distribution $\mu_V$. 
The test function $g$ is assumed to be smooth and of compact support, thus determining vague convergence. The index set $I$ is 
assumed to contain only bulk eigenvalues in case of \cite{ErdosYau} and is arbitrary in \cite{Guionnet}. Both works show that in 
the large $N$ limit, \eqref{singles} become independent of $V$, thereby showing universality. However, to deduce convergence to the 
Gaudin distribution via \cite{Tao13}, the set $I$ has to fulfill $\e N\leq \min I\leq \max I\leq (1-\e)N$ for some $\e>0$.

It is instructive to compare the statements \eqref{singles} and \eqref{first_statement_of_result}. In \eqref{singles} as 
well as in \eqref{oldresult}, by first taking the expectation, the spacing statistics are averaged over all realizations, thus 
making it non-empirical. After that, this average is compared to the limiting quantity. In \eqref{first_statement_of_result}, the 
empirical statistic is first compared to the limit. The error of that comparison is then shown to vanish in $L^1$. It is further 
important to note that for statements like \eqref{first_statement_of_result}, the number of eigenvalues in the statistic 
necessarily has to increase with $N$. Indeed, the smooth function $G$ can only be approximated well by the step function 
$\int_0^\cdot d\hat{\s}(I_N,x)$ if the number of steps goes to infinity. The convergence of \eqref{singles} for a finite set $I$ is 
only possible since taking the expectation means averaging over infinitely many spacings (from all different realizations).

Let us turn to empirical spacings. In 
 \cite{Schubert} the first author of this work shows under certain assumptions on the Christoffel-Darboux kernel (see 
\eqref{def_kernel} for a definition of the kernel) that for unitary invariant ensembles
\begin{align}
 \lim_{N\to\infty}\E_{N,V}\sup_{s\in\R}\left\lvert \int_0^s\frac{1}{\mu_V(a)\lv 
I_N\rv}d\s(I_N,N\mu_V(a)x)-G(s)\right\rvert=0\label{result_Schubert}
\end{align}
with $I_N,a$ as in \eqref{oldresult}.
This result is quite analogous to \eqref{KSresult} for the CUE and was also 
shown in \cite{Schubert} for eigenvalues of real-symmetric 
and quaternionic self-dual ensembles.

However, this result has the drawback that the considered spacing 
statistics  are very inefficient in an empirical sense in that only 
a tiny fraction of the eigenvalues is used for the statistics. Indeed, the expected number of rescaled eigenvalues 
$N\mu_V(a)x_j$ in $I_N=[a-t_N,a+t_N]$ is about $2t_N$ and thus by the condition $t_N/N\to0$, the fraction of used eigenvalues even 
goes to 0, as $N$ gets large. This unsatisfying situation is due to the scaling of the eigenvalues
\begin{align}
 N\mu_V(a)x_j,\ j=1,\dots,N,\label{localization}
\end{align}
which we 
will call \textit{localized} around the point $a$. To obtain a universal limit, the eigenvalues have to be rescaled such that their
mean spacing is (asymptotically) one. This is achieved by the rescaling \eqref{localization} in a small vicinity of a point 
$a$ in the bulk of the spectrum, where the mean spacing is $(\mu_V(a)N)^{-1}+o(N^{-1})$. But as the equilibrium density $\mu_V(a)$ 
is generically not constant in $a$ (in contrary to the CUE), a linear 
rescaling such as \eqref{localization} does not allow to consider spacings of 
eigenvalues which are spread over an interval of macroscopic size. To overcome this problem, we will in this article unfold the 
eigenvalues with the distribution function of $\mu_V$. This natural non-linear rescaling produces an ensemble with constant 
equilibrium density.

\section{Statement of Results}\label{sec_statements}
Before we give a precise formulation of our results, we specify our assumptions for the ensembles \eqref{density_P} and \eqref{densityRPS}. 
Except for the support $J$, the class of ensembles $P_{N,Q}^h$ formally subsumes the unitary invariant ensembles $P_{N,V}$. 
However, our assumptions depend on the specific type of ensemble and therefore we will consider two different sets of assumptions.
For $P_{N,V,f}$ these assumptions are
\vspace{2em}

\begin{minipage}{0.08\linewidth}
 \textbf{(GA)$_1$}
\end{minipage}
\begin{minipage}{0.85\linewidth}
 \begin{enumerate}
 \item $V,f:J\rightarrow \R$ are real analytic, $J=[L_-, L_+] \cap \R$ with $-\infty \le L_-<L_+ \le \infty$. \vspace{4pt}
\item $\inf_{t\in J}V''(t)>0$. (strict convexity)\vspace{4pt}
\item $\lim_{\lv x\rv\to\infty}V(x)-cf(x)=\infty$ for some $c>0$ in case that $J$ is unbounded.\vspace{4pt}
\item $L_-,L_+$ do not belong to the support of the equilibrium measure $\mu_V$. 
\end{enumerate}
\vspace{4pt}
If $f=0$, then (2) can be replaced by the weaker condition
\begin{enumerate}
 \item[(2')] $V'$ is strictly increasing.
\end{enumerate}
\end{minipage}\\

\vspace{2em}

For the ensembles $P_{N,Q}^h$, we make the following assumptions.
\vspace{2em}

\begin{minipage}{0.08\linewidth}
 \textbf{(GA)$_2$}
\end{minipage}
\begin{minipage}{0.85\linewidth}
 \begin{enumerate}
 \item $\map{Q,h}{\R}{\R}$ are real-analytic and symmetric around 0.\vspace{4pt}
\item $\a_Q:=\inf_{t\in\R}Q''(t)>0$.
\item $h$ is  a Schwartz function with exponentially fast decaying Fourier transform.
\end{enumerate}
\end{minipage}\\
\vspace{2em}

Under \GAii\,, it has been shown in \cite{GoetzeVenker} that for given $h$, for each $Q$ with $\a_Q$ large enough (depending on $h$), 
there
exists a probability measure $\mu_{Q}^h$ such that \eqref{LLN} holds with $\mu_Q^h$ replacing $\mu_V$ and $\E_{N,Q}^h$ replacing
$\E_{N,V}$. Furthermore, $\mu_Q^h$ has also the generic form \eqref{def_rho} for $V:=Q+h*\mu_Q^h$, where $*$ denotes 
convolution.\\

\begin{remark}[On the assumptions]
One could without substantial changes also introduce the external field $f$ to the model $P_{N,Q}^h$. For the sake of notational convenience this is not done 
here. The model could also be studied on an interval $J$, which might not be the whole line. This would 
require a condition analogous to \GAi~(4) for $P_{N,Q}^h$.  For repulsive particle systems  the equations \eqref{MRS 
numbers}, which determine the endpoints of the equilibrium measure, depend on the equilibrium measure itself, which makes checking 
an analog of \GAi~(4) more complicated than for invariant ensembles. Nevertheless, we will show in our analysis that a truncation 
to large enough $J$ only produces an asymptotically negligible error.
\end{remark}

To define the scaling of the particles, 
for $P_{N,V,f}$  let $F_V$ denote the distribution function of the equilibrium measure $\mu_V$ and consider the \textit{unfolded eigenvalue}
\begin{align*}
 \tilde{x}_i:=NF_V(x_i).
\end{align*}
If $x$ is a random configuration sampled from $P_{N,V,f}$, then $\tilde{x}$ is a point process on
$[0,N]$ with asymptotically constant density and mean spacing 1, at least in the bulk.
If $x$ is distributed according
to the repulsive particle system $P_{N,Q}^h$, let $F_Q^h$ denote the distribution function of $\mu_Q^h$ and define

 \begin{align*}
 \tilde{x}_i:=NF_Q^h(x_i).
\end{align*}
Now, our main theorem on the spacing distribution reads as follows.

\begin{theorem}\label{thrmspacings}
Let $I_N\subset[0,N]$ be
a sequence of intervals with 
$$\liminf_{N\to\infty}\dfrac1N\textup{dist}(I_N,\{0,N\})>0\ \text{ and }\ \lim_{N\to\infty}\lv I_N\rv\to\infty.$$ 
\begin{enumerate}
 \item  Let $P_N$ be either $P_{N,V,f}$ satisfying \GAi\, or $P_{N,Q}^h$  satisfying \GAii\, with $\a_Q$ large 
enough
(this depends on $h$ only) and $h$ negative-definite. Then for any $\e>0$
\begin{align}
 \E_N\left( \sup_{s \in \mathbb R} \left| \int_0^s d\hat{\s}(I_N,\tilde{x}) - G(s) 
\right|\right)=  \O
\left( \lv I_N\rv^{-\frac{1}{4} +\varepsilon} \right).\label{thrmeq2}
\end{align}
\item If $P_N=P_{N,Q}^h$ with $h$ not necessarily negative-definite, then for $\a_Q$ large enough \eqref{thrmeq2} holds with the 
$\O$-term being replaced by $o(1)$.
\end{enumerate}
In (1) and (2), $\hat{\s}(I_N,\tilde{x})$ can be replaced by $\lv I_N\rv^{-1}\s(I_N,\tilde{x})$ without altering the result.

\end{theorem}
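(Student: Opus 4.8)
The plan is to carry out, at the level of the unfolded point process $\tilde{x}$ on $[0,N]$, the Katz--Sarnak type scheme used for the CUE in \cite{KatzSarnak,Soshnikov} and for localized invariant ensembles in \cite{Schubert}. Write $N_s:=\s(I_N,\tilde{x})([0,s])$ for the number of consecutive pairs of $\tilde{x}$ lying in $I_N$ at distance at most $s$, and $N_\infty:=\s(I_N,\tilde{x})([0,\infty))$ for the total number of such pairs, so that $\int_0^s d\hat{\s}(I_N,\tilde{x})=N_s/N_\infty$ on the event $\{N_\infty>0\}$, whose complement has negligible probability. For each $s$ I would decompose
\begin{align*}
 \frac{N_s}{N_\infty}-G(s)=\left(\frac{N_s}{N_\infty}-\frac{N_s}{\lv I_N\rv}\right)+\frac{N_s-\E_N N_s}{\lv I_N\rv}+\left(\frac{\E_N N_s}{\lv I_N\rv}-G(s)\right),
\end{align*}
and estimate $\E_N\sup_s$ of each term separately. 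The first term is controlled by the concentration of the point count $\#\{i:\tilde{x}_i\in I_N\}$ around its mean $\lv I_N\rv(1+o(1))$: a variance bound for this count (coming from the two--point function of $\tilde{x}$) yields $N_\infty=\lv I_N\rv(1+\O(\lv I_N\rv^{-1/2+\e}))$ with overwhelming probability, which both disposes of the first term and proves the last sentence of the theorem, where $\hat{\s}(I_N,\tilde{x})$ is replaced by $\lv I_N\rv^{-1}\s(I_N,\tilde{x})$.

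For the bias term I would express $\E_N N_s$ through the correlation functions $R_k^{(N)}$ of $\tilde{x}$ by the inclusion--exclusion (Palm) identity
\begin{align*}
 \E_N N_s=\iint_{\substack{u,v\in I_N\\ 0<v-u\le s}}\sum_{m\ge0}\frac{(-1)^m}{m!}\int_{(u,v)^m}R_{m+2}^{(N)}(u,v,w_1,\dots,w_m)\,dw_1\cdots dw_m\,du\,dv,
\end{align*}
the inner series being the joint intensity of having points at $u$ and $v$ and none in between. The essential input is the strong bulk universality of the \emph{unfolded} correlation functions with optimal rate, established separately in the paper: $R_k^{(N)}(u,u+\zeta_1,\dots,u+\zeta_{k-1})$ agrees with $\det[S(\zeta_i-\zeta_j)]_{1\le i,j\le k}$ up to an error that, after the integrations above, contributes only $\O(\lv I_N\rv^{1/2+\e})$ to $\E_N N_s$, uniformly in $s$. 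Inserting this turns the right--hand side into $\lv I_N\rv$ times the translation--invariant sine--process expression, which is exactly $\lv I_N\rv G(s)$, whence the bias is $\O(\lv I_N\rv^{-1/2+\e})$. Rigour requires truncating the $m$--series at $m\asymp\log\lv I_N\rv$, with tail control from the a priori bound $R_k^{(N)}\le C^k$ and a ``clock'' estimate on the probability of many points in a short interval, and discarding the $\O(s)$--boundary layer of $I_N$; the latter is harmless since only $s\lesssim\sqrt{\log\lv I_N\rv}$ is relevant, $1-G(s)$ decaying super--polynomially and $N_\infty-N_s$ being correspondingly small beyond that range.

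For the fluctuation term I would establish a variance estimate $\mathrm{Var}_N(N_s)=\O(\lv I_N\rv^{1+\e})$, obtained by writing $\E_N N_s^2$ through $R_k^{(N)}$, $k\le 4$, and using the fast decay of sine--kernel correlations (again via universality and the a priori bounds); Chebyshev then gives $\E_N\lv N_s-\E_N N_s\rv/\lv I_N\rv=\O(\lv I_N\rv^{-1/2+\e})$ for each fixed $s$. To pass to the supremum, note that $s\mapsto N_s/\lv I_N\rv$ is nondecreasing and $G$ is continuous and nondecreasing: for a grid $0=s_0<\dots<s_K$ with $G(s_{j+1})-G(s_j)\asymp 1/K$ one gets the deterministic inequality
\begin{align*}
 \sup_{s\in\R}\lv\frac{N_s}{\lv I_N\rv}-G(s)\rv\le C\max_{0\le j\le K}\lv\frac{N_{s_j}}{\lv I_N\rv}-G(s_j)\rv+\frac{C}{K},
\end{align*}
so that, using $\E_N\max_j\le\sum_j\E_N$ with the per--point bound $\E_N\lv N_{s_j}/\lv I_N\rv-G(s_j)\rv=\O(\lv I_N\rv^{-1/2+\e})$ from the bias and variance estimates, $\E_N\sup_{s}\lv N_s/\lv I_N\rv-G(s)\rv=\O(K\lv I_N\rv^{-1/2+\e}+K^{-1})$; choosing $K\asymp\lv I_N\rv^{1/4}$ gives the rate $\O(\lv I_N\rv^{-1/4+\e})$ of part (1). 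For part (2), with $h$ not necessarily negative--definite, one only has qualitative (rate--free) universality, hence merely $\E_N N_s=\lv I_N\rv G(s)+o(\lv I_N\rv)$ and $\mathrm{Var}_N(N_s)=o(\lv I_N\rv^2)$ uniformly in $s$; feeding these into the same monotone--grid argument with a slowly growing number of points gives $o(1)$.

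The hard part is the universality input itself. The usual bulk universality (Deift--Zhou Riemann--Hilbert analysis for $P_{N,V,f}$; the reduction of \cite{GoetzeVenker} for $P_{N,Q}^h$) gives the sine--kernel limit at a fixed energy with \emph{some} rate, whereas here one must make the rate uniform over compact subsets of the bulk with explicit dependence on the distance to the spectral edge, and track how the non--linear unfolding $x\mapsto NF_V(x)$ distorts the kernel: since $R_k^{(N)}(y_1,\dots,y_k)=\rho_k^{(N)}(x_1,\dots,x_k)\prod_i(N\mu_V(x_i))^{-1}$ with $x_i=F_V^{-1}(y_i/N)$ and the Jacobian $\mu_V$ non--constant, a second--order expansion of $F_V$ is needed whose error must be shown to accumulate only to $\O(\lv I_N\rv^{1/2+\e})$ over an $x$--window of length $\O(\lv I_N\rv/N)$; for $P_{N,Q}^h$ one must additionally show that truncation to a large bounded interval $J$ is asymptotically negligible. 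Negative--definiteness of $h$ is precisely what allows $P_{N,Q}^h$ to be represented as an average of determinantal--type ensembles amenable to the quantitative analysis, which is why the power rate persists in part (1) but weakens to $o(1)$ in part (2).
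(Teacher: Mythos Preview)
Your overall architecture---three-term decomposition, bias via inclusion--exclusion, variance plus Chebyshev, then a monotone grid with $K\asymp|I_N|^{1/4}$---is exactly the Katz--Sarnak scheme the paper follows, and your last paragraph on the universality input is accurate. The discussion of $N_\infty$ versus $|I_N|$ and of the truncation in $s$ via the sub-Gaussian tail of $G$ is also in line with the paper.

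There is, however, a real gap in your fluctuation step. You write that $\mathrm{Var}_N(N_s)$ can be obtained ``through $R_k^{(N)}$, $k\le 4$''. This is not correct: $N_s$ counts \emph{nearest-neighbor} pairs, and the nearest-neighbor constraint (no point of the configuration lies between the two) makes each summand in $N_s$ a function of the whole sample, not of two points. Consequently $\E_N N_s^2$ cannot be expressed through correlation functions of bounded order; the Palm expansion you correctly invoke for the bias is equally necessary for the second moment, and squaring it produces correlation functions of all orders. You acknowledge the need to truncate the $m$-series for the bias but then silently drop it for the variance, which is inconsistent.

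The paper's device for this is to avoid the variance of $N_s$ altogether. One introduces the symmetric (order-free) counting measures
\[
\gamma^k(I_N,x)=\frac{1}{|I_N|}\sum_{i_1<\dots<i_k}\delta_{\max_j x_{i_j}-\min_j x_{i_j}}\,\mathbf{1}_{\{x_{i_1},x_{i_k}\in I_N\}},
\]
whose moments \emph{are} given by correlation functions of fixed order ($R_l$ with $l\le 2k$), and uses the alternating identity
\[
(-1)^m\frac{1}{|I_N|}\int_0^s d\sigma(I_N,x)\le (-1)^m\sum_{k=2}^m(-1)^k\int_0^s d\gamma^k(I_N,x)
\]
to bound $|N_s/|I_N|-G(s)|$ by $\sum_{k=2}^L|\int_0^s d\gamma^k-\E\int_0^s d\gamma^k|$ plus a tail controlled by the a priori estimate $R_k\le C_0^k$. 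One then bounds $\mathrm{Var}(\int_0^s d\gamma^k)$ for each $k\le L$ (this is where Fischer's inequality for the sine determinant and the combinatorics of overlapping index sets enter), sums the square roots, and only afterwards takes the grid maximum. The truncation level $L$ must grow with $N$ (roughly between $\sqrt{\log|I_N|}$ and $\log|I_N|$), so there is no fixed $k$ that suffices. If you replace your ``$k\le4$'' variance computation by this $\gamma^k$ argument, the rest of your outline goes through and matches the paper.
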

\begin{remark}\noindent
\begin{enumerate}
\item In the language of mathematical statistics, \eqref{thrmeq2} implies that
$\hat{\s}(I_N,\tilde{x})$ is an asymptotically consistent estimator for the Gaudin distribution, considered in the Kolmogorov 
metric. Note that similar results on the expected 
spacing distribution like \eqref{oldresult} only show the asymptotic unbiasedness.
For applications, it 
would be 
favorable to have the unfolding with $F:=F_V$ or $F:=F_Q^h$ being replaced by an unfolding based solely on the empirical values. We 
remark that using the empirical distribution function as a naive estimator for $F$ would result in a non-random $\tilde{x}$. 
However, we expect that a smoothed empirical distribution function should yield the desired. In fact, this is a typical 
procedure in statistical analysis. It will be considered in a future work.
\item To our knowledge, Theorem \ref{thrmspacings} is the first result on rates of convergence to the Gaudin distribution for 
invariant ensembles.  For the simpler CUE a rate of $\O(\lv I_N\rv^{-1/6+\e})$ for any
$\e>0$ was derived in \cite{KatzSarnak} resp.~a rate of $\O(\lv I_N\rv^{-1/2+\e})$ was shown in \cite{Soshnikov}. 
Numerical experiments  (cf. 
\cite{KriecherbauerSchubert}) suggest that the optimal rate for the GUE is $\lv I_N\rv^{-1/2}$,
possibly with some logarithmic factor. The 
dependence of the rate on $\lv I_N\rv$ reflects the fact that necessarily a growing number of empirical spacings has to be 
considered in order to obtain convergence.

\item
For negative-definite $h$, an exact representation of $P_{N,Q}^h$ in terms of determinantal ensembles will be derived in Section 
\ref{sec_rep_part}, which allows to transfer rates of convergence from the unitary invariant ensembles to the repulsive particle 
systems. For general $h$, only convergence can be shown, see Remark \ref{remark_Vitali} for more details. 
On the other hand, if $h$ is positive-definite, then it suffices to have $\a_Q>\sup_{t\in\R}-h''(t)$.  
\end{enumerate}
\end{remark}

Abandoning any rate of convergence, we an also deduce the result of Theorem \ref{thrmspacings} for $I_N=[0,N]$.
\begin{corollary}\label{cor_spacings}
With $P_N$  as in Theorem \ref{thrmspacings} (1) or (2), we have
\begin{align}
 \lim_{N\to\infty}\mathbb E_N\left( \sup_{s \in \mathbb R} \left| \int_0^s d\hat{\s}([0,N],\tilde{x}) - G(s)
\right|\right)= 0.\label{thrmeq1}
\end{align}
\end{corollary}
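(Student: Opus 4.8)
\medskip
\noindent\textbf{Proof idea (Corollary \ref{cor_spacings}).}
The plan is to deduce the statement from Theorem \ref{thrmspacings} by cutting out macroscopic neighbourhoods of the endpoints $0,N$ of $[0,N]$ and showing that the nearest-neighbour spacings destroyed by this truncation form an asymptotically vanishing proportion of all $N-1$ spacings. Fix $\e\in(0,1/2)$ and set $I_N^{(\e)}:=[\e N,(1-\e)N]$. Since $\textup{dist}(I_N^{(\e)},\{0,N\})=\e N$ and $|I_N^{(\e)}|=(1-2\e)N\to\infty$, the interval $I_N^{(\e)}$ satisfies the hypotheses of Theorem \ref{thrmspacings}, so $\E_N\big(\sup_{s\in\R}|\int_0^s d\hat\s(I_N^{(\e)},\tilde x)-G(s)|\big)\to0$; by the triangle inequality it then suffices to control the expected Kolmogorov distance between $\hat\s([0,N],\tilde x)$ and $\hat\s(I_N^{(\e)},\tilde x)$, showing it to be $\O(\e)$ uniformly in large $N$.

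For this comparison I would write $A:=\s([0,N],\tilde x)$, $B:=\s(I_N^{(\e)},\tilde x)$ and use two deterministic observations. First, the unfolding map $F=F_V$ (resp.\ $F_Q^h$) has range $[0,1]$, so all particles satisfy $\tilde x_i=NF(x_i)\in[0,N]$; thus $|A|:=\int_0^\infty dA=N-1$, and since $I_N^{(\e)}\subset[0,N]$ every consecutive pair counted in $B$ is counted in $A$, so $B\le A$ as measures and $N-1-|B|=|A|-|B|$ equals the number of consecutive pairs not entirely inside $I_N^{(\e)}$. Second, for finite counting measures with $B\le A$, writing $A([0,s])=B([0,s])+(A-B)([0,s])$ and using $0\le B([0,s])\le|B|\le|A|$ and $0\le(A-B)([0,s])\le|A|-|B|$ gives the elementary deterministic bound
\begin{align*}
\sup_{s\in\R}\left|\int_0^s d\hat\s([0,N],\tilde x)-\int_0^s d\hat\s(I_N^{(\e)},\tilde x)\right|\;\le\;\frac{|A|-|B|}{|A|}\;=\;\frac{N-1-|B|}{N-1}
\end{align*}
(this also covers the degenerate event $|B|=0$, the left side being $\le1$, and, up to the factor $(N-1)/N$, the variant with $|I_N|^{-1}\s$ in place of $\hat\s$).

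It remains to bound $\E_N(N-1-|B|)$. A pair not entirely inside $I_N^{(\e)}$ has an endpoint below $\e N$ or above $(1-\e)N$, so $N-1-|B|\le\#\{i:\tilde x_i<\e N\}+\#\{i:\tilde x_i>(1-\e)N\}$. Here $\tilde x_i<\e N\Rightarrow F(x_i)\le\e\Rightarrow x_i\le q_\e$, where $q_\e:=\sup\{t:F(t)\le\e\}$ satisfies $F(q_\e)=\e$ by continuity of $F$, hence $\mu((-\infty,q_\e])=\e$ for $\mu=\mu_V$, resp.\ $\mu_Q^h$. Since $\mu$ is atomless, approximating $\dopp1_{(-\infty,q_\e]}$ from inside and outside by continuous bounded functions and invoking the law of large numbers \eqref{LLN} — which holds with limit $\mu$ for both models under \GAi, resp.\ \GAii\ (for $P_{N,V,f}$ the subleading field $f$ does not change the limit $\mu_V$) — gives $\frac1N\E_N\#\{i:\tilde x_i<\e N\}\to\mu((-\infty,q_\e])=\e$, and symmetrically at the upper edge. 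Combining the last three displays, $\limsup_{N\to\infty}\E_N\big(\sup_s|\int_0^s d\hat\s([0,N],\tilde x)-\int_0^s d\hat\s(I_N^{(\e)},\tilde x)|\big)\le2\e$, and therefore $\limsup_{N\to\infty}\E_N\big(\sup_s|\int_0^s d\hat\s([0,N],\tilde x)-G(s)|\big)\le2\e$; letting $\e\downarrow0$ proves \eqref{thrmeq1}.

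I do not expect a real obstacle, as the argument is soft; the two points needing a little attention are the elementary inequality bounding the Kolmogorov distance between two renormalised counting measures by the proportion of their differing atoms, and the verification that the expected fraction of destroyed edge spacings is $\O(\e)$, which is precisely where the macroscopic law of large numbers \eqref{LLN} (or, alternatively, the rigidity estimates already used in proving Theorem \ref{thrmspacings}) comes in.
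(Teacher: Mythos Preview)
Your argument is correct and takes a genuinely different, more elementary route than the paper. The paper chooses a \emph{single} sequence $I_N\subset[0,N]$ with $I_N/N\to[0,1]$ and applies Theorem \ref{thrmgeneral} directly to it; for this it must extend the sine-kernel universality of Theorem \ref{universality} all the way into the edge regions, which occupies most of the proof and requires the Airy-kernel asymptotics \eqref{Airyasymptotics}--\eqref{Airy'asymptotics} together with the edge formulae from \cite{KSSV}. Your approach instead freezes a macroscopic $\e>0$, invokes Theorem \ref{thrmspacings} as a black box on $I_N^{(\e)}=[\e N,(1-\e)N]$, and then absorbs the remaining $2\e N$ edge particles via the monotonicity inequality $\sup_s|\hat A([0,s])-\hat B([0,s])|\le(|A|-|B|)/|A|$ and the law of large numbers \eqref{LLN}, letting $\e\downarrow0$ at the very end. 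Both the deterministic Kolmogorov bound and the counting estimate $N-1-|B|\le\#\{i:\tilde x_i<\e N\}+\#\{i:\tilde x_i>(1-\e)N\}$ are valid as you state them (since a bad pair $(\tilde x_j,\tilde x_{j+1})$ satisfies $\tilde x_j<\e N$ or $\tilde x_{j+1}>(1-\e)N$, each case indexed by a distinct edge particle).

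What each approach buys: yours is shorter, avoids all edge asymptotics, and makes transparent that only the macroscopic LLN is needed beyond Theorem \ref{thrmspacings}. The paper's approach, while heavier, establishes as a byproduct the transition from Airy to sine statistics near the spectral edges (cf.\ \eqref{AirytoSine}), which is of independent interest and in principle could yield a rate depending on how fast $I_N$ exhausts $[0,N]$. The one place your argument leans on something not quite stated verbatim in the paper is \eqref{LLN} for $P_{N,V,f}$ rather than $P_{N,V}$; this is indeed standard (the bounded field $f$ does not affect the equilibrium measure), and in any case follows at once from Theorem \ref{universality}(1) applied on the diagonal together with the total-mass identity $\int R_N^1=N$.
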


\begin{remark}
 Note that in \eqref{thrmeq1}, edge spacings are included. Although correlations between
eigenvalues at the edge are not given by the sine kernel, the number of edge spacings is relatively small and thus does not change
the limit distribution.
\end{remark}

The next corollary shows a much better rate of convergence for the expected spacing distribution. We believe that this rate is almost optimal.
\begin{corollary}\label{cor_intensity}
Let $P_N, I_N$ be as in Theorem \ref{thrmspacings} (1). Then for any $\e>0$
 \begin{align*}
\sup_{s\in\R}\left\lv\int_0^s\frac{1}{\lv I_N\rv}d\E_N\s(I_N,\tilde{x}) - G(s)\right\rv=\O\lb \lv
I_N\rv^{-1+\e}\rb.
       \end{align*}
\end{corollary}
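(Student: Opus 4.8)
The plan is to express $\int_0^s d\E_N\s(I_N,\tilde x)$ entirely through the $k$-point correlation functions $\rho_k^{(N)}$ of the unfolded process $\tilde x$, and then to feed in the strong bulk universality estimate with its optimal rate. Write $I_N=[\a,\b]$. The quantity $\int_0^s d\E_N\s(I_N,\tilde x)$ is the expected number of indices $j$ with $\tilde x_j,\tilde x_{j+1}\in I_N$ and $\tilde x_{j+1}-\tilde x_j\le s$; by the Palm formula combined with the inclusion-exclusion expansion for the probability of an empty interval it equals
\begin{align*}
\int_0^s\!\int_{[a,a+u]\subset I_N}\sum_{\ell\ge0}\frac{(-1)^\ell}{\ell!}\int_{(0,u)^\ell}\rho_{\ell+2}^{(N)}(a,a+w_1,\dots,a+w_\ell,a+u)\,dw\,da\,du .
\end{align*}
For $P_{N,V,f}$, and -- after the determinantal representation of Section \ref{sec_rep_part} -- also for negative-definite $P_{N,Q}^h$, this series converges absolutely and resums into a Fredholm determinant of the relevant kernel restricted to $(a,a+u)$, times a bounded conditional two-point function. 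The analogous computation for the sine process, combined with translation invariance of $\det(S(z_i-z_j))$ and the defining series for $G$, shows that replacing every $\rho_{\ell+2}^{(N)}$ by $\det(S(\cdot-\cdot))$ and the inner integral over $a$ by its full length $|I_N|$ turns this expression into $|I_N|\,G(s)$.

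I would then subtract the two and split the difference into three contributions. The first is the boundary defect from $[a,a+u]\subset I_N$ forcing $a$ to range over an interval of length $|I_N|-u$ rather than $|I_N|$; after dividing by $|I_N|$ it is at most $|I_N|^{-1}\int_0^\infty u\,p(u)\,du=\O(|I_N|^{-1})$, since the sine-process spacing density $p$ has finite mean. The second is the normalisation defect: $|I_N|^{-1}\int_0^\infty d\E_N\s(I_N,\tilde x)=|I_N|^{-1}\E_N\bigl[(N(I_N)-1)^+\bigr]=1+\O(|I_N|^{-1+\e})$, using that the unfolded one-point function is $1+\O(N^{-1})$ in the bulk and that $\{N(I_N)=0\}$ has super-exponentially small probability, while $G(\infty)=1$. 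The third is the genuine universality error $|I_N|^{-1}\int_0^s\int_a\bigl(\rho^{(N)}_{\bullet}-\det S\bigr)$, which I would control with the strong bulk universality theorem: it provides $|\rho_k^{(N)}-\det(S(\cdot-\cdot))|=\O(N^{-1+\e})=\O(|I_N|^{-1+\e})$ uniformly over arguments in the macroscopic bulk of $[0,N]$, in a form strong enough to transfer to the Fredholm determinants and conditional kernels above.

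The delicate point will be uniformity in $s\in\R$. For $s$ past a cut-off $T$ I would avoid a direct estimate: since $s\mapsto|I_N|^{-1}\int_0^s d\E_N\s(I_N,\tilde x)$ and $G$ are nondecreasing with total masses within $\O(|I_N|^{-1+\e})$ of each other, the supremum over $s\ge T$ is bounded by the error already obtained at $s=T$ plus $1-G(T)=\int_T^\infty p(u)\,du$; since $p(u)\lesssim e^{-cu^2}$ for some $c>0$ this is $\O(|I_N|^{-1})$ as soon as $T\asymp\sqrt{\log|I_N|}$. For $s\le T$ I would work with the resummed expression, comparing $\det\bigl(I-K_N|_{(a,a+u)}\bigr)$ with $\det\bigl(I-S|_{(a,a+u)}\bigr)$ via the trace-norm perturbation bound for Fredholm determinants, together with the corresponding conditional two-point kernels. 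Every error-amplifying factor that enters -- the operator norm of $\bigl(I-K|_{(a,a+u)}\bigr)^{-1}$, the size of the conditioned kernel, the exponential in the perturbation estimate -- is either of the form $e^{\O(u)}$ or is multiplied by a gap probability of order $e^{-cu^2}$, so on $0\le u\le T$ it inflates the bound only by a factor $|I_N|^{o(1)}$; hence this third contribution is $\O(N^{-1+\e}|I_N|^{o(1)})=\O(|I_N|^{-1+\e'})$. Combining the three contributions with the monotonicity sandwich for $s\ge T$ then gives the asserted rate.

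The main obstacle I anticipate is exactly this bookkeeping on the window $s\le T$: one has to verify that the coefficients of $u^2$ in all competing exponentials are strictly dominated by the Gaussian decay of the gap probabilities -- equivalently, that the correlation-function series may be truncated at $\ell\lesssim u$ using rigidity of $\tilde x$, rather than at the wasteful $\ell\lesssim u^2$ -- so that no power of $|I_N|$ is lost for $u\asymp\sqrt{\log|I_N|}$. Everything else should be a routine combination of the inclusion-exclusion identity, the universality theorem, and monotonicity. In contrast with Theorem \ref{thrmspacings}, no variance or fluctuation estimate is needed here, which is precisely why the rate improves from $|I_N|^{-1/4+\e}$ to $|I_N|^{-1+\e}$.
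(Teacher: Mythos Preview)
Your approach differs from the paper's and is more involved than necessary. The paper never resums into Fredholm determinants. It simply observes that the computation already carried out in Lemma~\ref{pointwise}(1) gives, upon taking expectations and using the alternating-sum bounds \eqref{ks_alt_sums},
\[
\Bigl|\frac{1}{|I_N|}\int_0^s d\E_N\s(I_N,\tilde x)-G(s)\Bigr|\le\sum_{k=2}^L\Bigl[\frac{s^kC_0^k}{(k-1)!}\,\O\Bigl(\frac{1}{|I_N|}\Bigr)+\frac{s^{k-1}C_0^k\,k^{k/2+1}}{(k-1)!\,N^{1-\e}}\Bigr]+E(s,L)+E(s,L+1),
\]
then chooses $L$ with $\sqrt{\log|I_N|}=o(L)$, $L=o(\log|I_N|)$ and invokes the power-sum Lemma~\ref{lemma_sums} exactly as in the proof of Lemma~\ref{pointwise}(2) to bound the right-hand side by $\O(|I_N|^{-1+\e'})$ uniformly for $s=\O(\sqrt{\log|I_N|})$. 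The extension to large $s$ is then by monotonicity and the sub-Gaussian tail of $G$ together with \eqref{firstmoment}, just as you propose. The whole argument is a few lines on top of machinery already built for Theorem~\ref{thrmgeneral}.

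Your route through the Palm formula and trace-norm perturbation of Fredholm determinants creates exactly the obstacle you flag at the end: balancing the $e^{\O(u)}$ factors coming from $(I-K|_{(a,a+u)})^{-1}$ and from the perturbation exponential against the $e^{-cu^2}$ gap-probability decay on the range $u\le T\asymp\sqrt{\log|I_N|}$. This is not hopeless, but it is unnecessary. Your own suggested fix---truncating the correlation-function series at $\ell\lesssim u$ rather than resumming---is in fact what the paper does, except that the paper performs the truncation directly on the alternating $\gamma^k$ series \emph{before} any determinantal resummation, so that the bookkeeping reduces to the elementary entire-function estimate of Lemma~\ref{lemma_sums} and no rigidity input is needed. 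Your observation that no variance estimate enters, and that this is why the rate improves from $|I_N|^{-1/4+\e}$ to $|I_N|^{-1+\e}$, is correct and is the heart of the matter.
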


\begin{remark}
 Recall from the discussion around \eqref{singles} that a similar result should also hold for intervals with $\lv I_N\rv$ of order 
1, probably with rate $\O(N^{-1+\e})$. As Corollary \ref{cor_intensity} is merely a byproduct of an analysis which necessarily 
deals with growing intervals, we will not pursue this here.
\end{remark}

A major ingredient to the proof of Theorem \ref{thrmspacings} is a strong form of  bulk universality for correlation functions,  
which should be of
independent interest. To state it, let us recall the notion of correlation functions. 

For a probability measure $P_N(x)dx$ on $\R^N$, invariant under permutations of its arguments, the $k$-th correlation function is 
the map $\map{R_{N}^k}{\R^k}{\R}$, 
\begin{align}
 R_N^k(t_1,\dots,t_k):=\frac{N!}{(N-k)!}\int_{\R^{N-k}} P_N(t_1,\ldots, t_N)dt_{k+1}\dots dt_N.\label{correlationfunctions}
\end{align}
Note that in previous works of the second author, the correlation functions are defined as the $k$-dimensional marginal densities 
of $P_N$ and therefore differ from \eqref{correlationfunctions} by the factor $N!/(N-k)!$. The definition in 
\eqref{correlationfunctions} is more convenient for dealing with sums and will be used throughout this article. 

A crucial fact for many computations and universality proofs is that unitary invariant ensembles are determinantal, that is
\begin{align}
 R_{N,V,f}^k(t)=\det(K_{N,V,f}(t_i,t_j))_{1\leq i,j\leq k},\label{determinantal_relations}
\end{align}
such that the analysis boils down to studying the so-called Christoffel-Darboux kernel
\begin{align}
 K_{N,V,f}(t,s):=\sum_{j=0}^{N-1}p_{j,N}(t)p_{j,N}(s)e^{-N/2(V(t)+V(s))+1/2(f(t)+f(s)},\label{def_kernel}
\end{align}
where $p_{j,N}, j=0,1,\dots$ are the orthonormal polynomials with positive leading coefficients to the weight 
$e^{-(1/2)(NV(t)-f(t))}$ on $J$.

 To formulate a certain uniformity in the field $f$, let for a domain $D\subset\C$, $(X_D, \|\cdot\|_D)$ 
denote the 
Banach space of functions $\map{f}{D}{\C}$ which are analytic, bounded and real-valued on $D\cap \R$. Here $\|\cdot\|_D$ denotes 
the sup-norm on $D$.

\begin{theorem}\label{universality}
Let $I_N\subset[0,N]$ be such that $\liminf_{N\to\infty}\textup{dist}(I_N,\{0,N\})/N>0$. 
\begin{enumerate}
 \item Let $V,f$ satisfy \GAi.
 Then
\begin{align*}
\frac{1}{N \mu_V(F_V^{-1} (\frac{t}{N}))} K_{N,V,f} \left( F_V^{-1}\left(\frac{t}{N}\right),F_V^{-1}\left(\frac{s}{N} \right) 
\right)=
\frac{\sin (\pi (t-s))}{\pi (t-s)} + \mathcal O\left(  \frac{1}{N}\right),
\end{align*}
where the error term is uniform for $t,s \in I_N$.
\item Assume in addition to (1) that  $J$ is compact and let $0<\eta<1$. Then there is a complex domain 
$D\supset J$ such that (1) holds uniformly for $f\in X_D$ with $\|f\|_D\leq
N^\eta$ if $\O(1/N)$ in (1) is replaced by $\O(N^{\eta-1})$.
\item Let $R^k_N$ be the $k$-th correlation function of $P_N$ with $P_N$ as in Theorem \ref{thrmspacings} (1). Then, 
abbreviating $\hat{t}_j:=F_V^{-1}(t_j/N)$ in the
unitary invariant case and $\hat{t}_j:=(F_Q^h)^{-1}(t_j/N)$ in the case $P_N=P_{N,Q}^h$, as well as writing $\mu$ for $\mu_V$ and 
$\mu_Q^h$, respectively, we have for any $\e>0$
\begin{align*}
\frac{1}{N^k \prod_{j=1}^k\mu(\hat{t}_j)} R_N^k(\hat{t}_1,\dots,\hat{t}_k)=\det\left[\frac{\sin (\pi (t_i-t_j))}{\pi
(t_i-t_j)}\right]_{1\leq i,j\leq k}+\O\lb N^{-1+\e}\rb
\end{align*}
with the $\O$ term being uniform for $t_1,\dots,t_k\in I_N$. If $P_N=P_{N,V,f}$, then the statement is valid for $\e=0$. For $P_N$ as 
in Theorem \ref{thrmspacings} (2), the statement is valid with the $\O$ term replaced by $o(1)$.
\end{enumerate}
\end{theorem}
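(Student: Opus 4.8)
The plan is to regard part (1) as the analytic core and parts (2)--(3) as refinements of it. For (1) I would run the Deift--Kriecherbauer--McLaughlin--Venakides--Zhou nonlinear steepest-descent analysis of the orthogonal-polynomial Riemann--Hilbert problem for the weight $e^{-NV+f}$ on $J$. Under \GAi\ the potential $NV$ is real-analytic and strictly convex, $f$ is an analytic external field that is bounded near $\supp\mu_V$ and enters at lower order, and \GAi(4) together with $\liminf_N\textup{dist}(I_N,\{0,N\})/N>0$ confines all points $F_V^{-1}(t/N)$, $t\in I_N$, to a fixed compact subset of the interior of $\supp\mu_V$. The standard analysis then produces the bulk asymptotics of the Christoffel--Darboux kernel, and the key observation is that the oscillatory phase in its sine-type leading term is precisely $N\pi\int_y^x\mu_V(\xi)\,d\xi$; since $F_V$ is the distribution function of $\mu_V$, the substitution $x=F_V^{-1}(t/N)$, $y=F_V^{-1}(s/N)$ converts this phase into $\pi(t-s)$ with no error, so after unfolding the only surviving errors are the $1+\O(1/N)$ correction from the $R$-matrix and the discrepancy in the prefactor.

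To complete (1) I would split into regimes. When $|t-s|$ is bounded the classical local sine-kernel limit applies verbatim; when $|t-s|\to\infty$ both $\sin\pi(t-s)/(\pi(t-s))$ and the rescaled kernel are $\O(1/|t-s|)$ by the Riemann--Hilbert formula, so the identity holds within the claimed error. The only nontrivial point is the transition: replacing the prefactor $N\mu_V(F_V^{-1}(t/N))\bigl(F_V^{-1}(t/N)-F_V^{-1}(s/N)\bigr)$ by $t-s$ costs a relative error $\O(|F_V^{-1}(t/N)-F_V^{-1}(s/N)|)=\O(|t-s|/N)$, by Taylor-expanding $F_V$ and using that $\mu_V$ and $\mu_V'$ are bounded above and below on the relevant compact set; multiplied against $|\sin\pi(t-s)/(t-s)|=\O(1/(1+|t-s|))$ this contributes $\O(1/N)$, so (1) follows uniformly in $t,s\in I_N$. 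The uniformity over an interval that may occupy almost all of $[0,N]$ is thus soft, handled entirely by this micro/macro split.

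For (2) the task is to make every step of the steepest descent quantitative in $\|f\|_D$. With $\|f\|_D\le N^\eta$, $\eta<1$, the field is still $o(N)$ relative to $NV$, so the $g$-function and the equilibrium measure remain $\mu_V$, and $f$ enters only through the Szeg\H{o}-type outer parametrix (a factor of size $e^{\O(\|f\|_D)}$ whose relevant ratios are $1+\O(\|f\|_D/N)$), through the local Airy parametrices at the soft edges, and through the jump of the final small-norm $R$-problem, which becomes $I+\O((1+\|f\|_D)/N)=I+\O(N^{\eta-1})$. Carrying these bounds through gives (1) with $\O(1/N)$ replaced by $\O(N^{\eta-1})$, uniformly over $\{f\in X_D:\|f\|_D\le N^\eta\}$. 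I expect this to be the main obstacle: one must verify that the whole steepest-descent apparatus --- outer parametrix, edge parametrices, small-norm reduction --- stays valid and quantitatively controlled when the external field is allowed to grow like $N^\eta$, since this is exactly the uniformity needed for the later passage to the repulsive particle systems, where the random field is genuinely large. (A softer alternative uses holomorphic dependence of $K_{N,V,f}$ on $f\in X_D$ together with a Cauchy/Vitali estimate off a smaller ball, but the direct bookkeeping produces the rate more transparently.)

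For (3), if $P_N=P_{N,V,f}$ I would use $R^k_N(\hat t_1,\dots,\hat t_k)=\det\bigl(K_{N,V,f}(\hat t_i,\hat t_j)\bigr)_{1\le i,j\le k}$, pull the normalisations into the determinant to get $R^k_N/(N^k\prod_j\mu(\hat t_j))=\det\bigl(\tilde K(t_i,t_j)\bigr)_{i,j\le k}$ with $\tilde K(t,s):=K_{N,V,f}(\hat t,\hat s)/(N\sqrt{\mu(\hat t)\mu(\hat s)})$, and then note that by (1), $K_{N,V,f}(\hat t,\hat s)/(N\mu(\hat t))=S(t-s)+\O(1/N)$ with $S$ the sine kernel; writing $\tilde K(t,s)=\sqrt{\mu(\hat t)/\mu(\hat s)}\,K_{N,V,f}(\hat t,\hat s)/(N\mu(\hat t))$ and using $\sqrt{\mu(\hat t)/\mu(\hat s)}=1+\O(|t-s|/N)$ together with $S(t-s)=\O(1/(1+|t-s|))$ gives $\tilde K(t,s)=S(t-s)+\O(1/N)$ uniformly, and expanding the $k\times k$ determinant (fixed $k$, entries $\O(1)$) yields the claim with $\e=0$. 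If $P_N=P_{N,Q}^h$ I would first truncate to a large compact $J$ (asymptotically negligible, cf.\ the Remark after \GAii) and, for $h$ negative-definite, invoke the representation of Section \ref{sec_rep_part} expressing $R^k_{N,Q,h}$ as a mixture over a Gaussian field $\xi$ of invariant-type correlation functions with external field $f_\xi$ and $V=Q+h*\mu_Q^h$ (so that $\mu=\mu_Q^h$ is the right unfolding density); splitting according to $\{\|f_\xi\|_D\le N^\e\}$ --- whose complement has super-polynomially small probability by the Schwartz and exponential-Fourier-decay hypotheses on $h$, and contributes negligibly after a crude a priori bound on $R^k$ --- and applying (2) on the good event with $\eta=\e$ yields $\O(N^{\e-1})$. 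For general $h$ the Gaussian representation is unavailable, and I would instead use an analyticity (Vitali) argument in a coupling parameter to reduce to the negative-definite case at the cost of the rate, giving $o(1)$ as in Remark \ref{remark_Vitali}.
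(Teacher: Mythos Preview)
Your proposal is correct and structurally matches the paper's proof: the bulk kernel asymptotics from Riemann--Hilbert/steepest descent, the micro/macro split in $|t-s|$ with a Taylor expansion of $F_V^{-1}$ to handle the denominator, and for (3) the determinantal expansion, Gaussian (Sine--Gordon) linearisation, truncation to $\{\|f\|_D\le N^\eta\}$ via sub-Gaussian tails, application of (2) on the good event, and Vitali in a complex coupling parameter for general $h$ --- all of this is exactly what the paper does.

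The one substantive organisational difference is in parts (1)--(2). You keep the $g$-function built from $\mu_V$ and push the field $f$ into a Szeg\H{o}-type outer parametrix, so the oscillatory phase is \emph{exactly} $\pi(t-s)$ after unfolding and all $f$-dependence sits in parametrix ratios and the $R$-jump. The paper instead absorbs $f/N$ into the potential, writes $V,f$ for $V-f/N$, and quotes the finished bulk formula \eqref{bulk-formula} from \cite{KSSV} with equilibrium density $\rho_{V,f}$; the $f$-dependence then enters as a perturbation $\mu_{V,f}=\mu_V+\O(\|f\|/N)$ of the equilibrium measure, obtained from Fr\'echet differentiability of the MRS endpoints $a_V,b_V$ as functions of the potential. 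This produces a phase correction $g_{V,f}(\hat t)-g_{V,f}(\hat s)=\O(\|f\||t-s|/N^2)$, harmless after division by $|t-s|$. Your route should work but requires redoing the steepest-descent bookkeeping with a Szeg\H{o} factor of size $e^{\O(N^\eta)}$ and checking the edge matching carefully (as you note, this is the main obstacle); the paper's route avoids all of that by citing \cite{KSSV} wholesale and reducing (2) to a handful of elementary perturbation estimates on $a_{V,f},b_{V,f},\lambda_{V,f},\rho_{V,f}$. Both give the same $\O(N^{\eta-1})$.
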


\begin{remark}
The convergence of the Christoffel-Darboux kernel of some determinantal ensemble to the sine kernel is a very typical result in RMT and the content of numerous papers in the field. We will mention here only very few seminal papers and refer to \cite{handbookKuijlaars} for an overview instead. A first universality proof was given by Pastur and Shcherbina \cite{PasturS97} (see also \cite{PasturS08}) for sufficiently smooth $V$. Deift et al.~\cite{Deiftetal99} showed universality for real-analytic $V$ using Riemann-Hilbert techniques and more recently  Levin and Lubinsky \cite{LL08} established it under very mild assumptions on $V$ using complex analysis.

In the existing literature, the kernel is usually considered in the localized scaling, that is $K_N(a+\frac{t}{N\mu(a)},a+\frac{s}{N\mu(a)})$, where $a$ is in the bulk of the spectrum and $\mu$ is the limiting spectral density. Then $t$ and $s$ are typically assumed to lie in some fixed compact set, that is their distance is bounded in $N$. In the recent \cite{KSSV}, convergence is shown under the assumption $\lv t-s\rv=o(N)$ with the rate $\O((1+\lv t\rv+\lv s\rv)/N)$, which is optimal in the localized scaling.
To our knowledge, Theorem \ref{universality} is the first version of bulk universality for unitary invariant ensembles which does
not require all eigenvalues to lie in a vicinity of some point $a$. Moreover, the rate in part (1) of the theorem is optimal, as can be seen for instance in \eqref{bulk-formula}.
\end{remark}

Theorem \ref{thrmspacings} will be deduced from the following more general result.
\begin{theorem}\label{thrmgeneral}
Let for each $N$, $I_N\subset [0,\infty)$ be an interval and $P_N(x)dx$ be a probability measure on $\R^N$, invariant under 
permutation of 
the
coordinates. Let  $R_N^k$ denote the $k$-th correlation function of $P_N(x)dx$, defined in \eqref{correlationfunctions}. 
Further 
let $C_0>0$ denote a constant such that the following conditions are satisfied: 
\begin{enumerate}
 \item For all  $N\in \mathbb N$ we have
\begin{align*}
        \sup_{t_1,\dots,t_k\in I_N}\left\lvert\frac{1}{N^k} R_N^k(t_1/N,\dots,t_k/N)\right\rvert\leq C_0^k.
       \end{align*}
\item There exists $\k_N>0$ with $\lim_{N\to\infty}\k_N=\infty$ such that for all $N\in \mathbb N$ we have
\begin{align*}
   \sup_{t_1,\ldots,t_k  \in I_N}
		\left| \frac{1}{N^k}R_N^k(t_1/N, \ldots, t_k/N) - S_k(t_1,\ldots,t_k) \right| \leq   k^{k/2+1}\cdot C_0^k 
\cdot\frac{1}{\k_N}, 
\end{align*}
where for  $S$ as in \eqref{sine_kernel}, $S_k$
is given by
\begin{align*}
       S_k(t_1,\ldots,t_k) \coloneqq  \det \left[ S(t_i-t_j)\right]_{1 \leq i,j \leq k}.
      \end{align*}
\end{enumerate}
Then for any $\e>0$
\begin{align*}
 \E_N\left( \sup_{s \in \mathbb R} \left| \int_0^s  d\hat{\s}(I_N,Nx) - G(s) 
\right|\right)=  \O
\left( \min(\lv I_N\rv,\k_N)^{-\frac{1}{4} +\varepsilon} \right).
\end{align*}
The same result holds with $\hat{\s}(I_N,Nx)$ being replaced by $\lv I_N\rv^{-1}\s(I_N,Nx)$.
\end{theorem}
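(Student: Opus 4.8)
The plan is to follow the general scheme of \cite{Schubert}, of which Theorem~\ref{thrmgeneral} is an abstract version: express the spacing count through the correlation functions via the (finite, hence exact) inclusion--exclusion expansion of empty-interval indicators, use condition (2) to pass to the sine kernel and condition (1) to dominate all remainders, and pay for the uniformity in $s$ by a monotonicity/net argument. Write $\chi_N:=\#\{j:Nx_j\in I_N\}$ and $\Xi(I_N,s):=\int_0^s d\s(I_N,Nx)$, the number of pairs of consecutive points of $\{Nx_j\}$ contained in $I_N$ with gap at most $s$; the total mass of $\s(I_N,Nx)$ equals $\chi_N-1$ (when $\chi_N\ge1$). \textbf{Step 1 (reduction to the unnormalized count).} Condition (2) with $k=1$ and $S(0)=1$ gives $\E_N\chi_N=\lv I_N\rv(1+\O(\k_N^{-1}))$, while expanding $\E_N\chi_N^2$ through $R_N^1,R_N^2$, comparing with the sine kernel (for which $\int\!\int_{I_N^2}S(t-t')^2\,dt\,dt'=\O(\lv I_N\rv)$) and using (2) yields $\mathrm{Var}_N\chi_N=\O(\lv I_N\rv+\lv I_N\rv^2\k_N^{-1})$. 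Since $0\le\Xi(I_N,s)\le\chi_N-1$, the difference between $\int_0^s d\hat\s(I_N,Nx)=\Xi(I_N,s)/(\chi_N-1)$ and $\lv I_N\rv^{-1}\Xi(I_N,s)$ is $\O(\lv I_N\rv^{-1}\E_N\lv\chi_N-\lv I_N\rv\rv)=\O(\lv I_N\rv^{-1/2}+\k_N^{-1/2})$ in $L^1$, and $\{\chi_N\le1\}$ has probability $\O(\lv I_N\rv^{-1}+\k_N^{-1})$; both are below the target rate, so it suffices to prove the bound for $\lv I_N\rv^{-1}\Xi(I_N,s)$.

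\textbf{Step 2 (net argument and the bias).} Since $s\mapsto\lv I_N\rv^{-1}\Xi(I_N,s)$ is nondecreasing and $G$ is continuous and nondecreasing from $0$ to $1$, choosing $0=s_0<\dots<s_M$ with $G(s_i)-G(s_{i-1})\le 1/M$ gives
\begin{align*}
\E_N\sup_{s}\Big|\tfrac1{\lv I_N\rv}\Xi(I_N,s)-G(s)\Big|
&\le\tfrac1M+\sum_{i=1}^M\tfrac1{\lv I_N\rv}\E_N\big|\Xi(I_N,s_i)-\E_N\Xi(I_N,s_i)\big|\\
&\quad+\sum_{i=1}^M\tfrac1{\lv I_N\rv}\big|\E_N\Xi(I_N,s_i)-\lv I_N\rv G(s_i)\big|.
\end{align*}
For the bias I would write $\Xi(I_N,s)=\sum_{Nx_i\in I_N'}\big(1-\mathds{1}[(Nx_i,Nx_i+s]\cap\{Nx_j\}_{j\ne i}=\emptyset]\big)+\O(1)$ with $I_N'$ the interval $I_N$ shortened by $s$ at the right end, expand the empty-interval indicator by inclusion--exclusion, and obtain
\[
\E_N\Xi(I_N,s)=\E_N\chi_N-\sum_{m\ge0}\frac{(-1)^m}{m!}\int_{I_N'}\int_{(p,p+s]^m}\frac{R_N^{m+1}(p/N,z_1/N,\dots,z_m/N)}{N^{m+1}}\,dz\,dp+\O(1)
\]
(with $dz=dz_1\cdots dz_m$). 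Replacing $R_N^{m+1}/N^{m+1}$ by $S_{m+1}$ termwise (error bounded by (2)) and using the translation invariance of the sine kernel \eqref{sine_kernel} to identify $\sum_m\frac{(-1)^m}{m!}\int_{[0,s]^m}S_{m+1}(0,z)\,dz=1-G(s)$ (this is precisely the Gaudin series), one gets $\E_N\Xi(I_N,s)=\lv I_N\rv G(s)+\O(1)+\O(\lv I_N\rv\k_N^{-1})$, where the $\k_N^{-1}$-error is summable in $m$ since the combinatorial factor $k^{k/2+1}$ in (2) is beaten by $1/m!$ via Stirling's formula. Hence the bias term is $\O(\k_N^{-1}+\lv I_N\rv^{-1})$, uniformly in the $s_i$.

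\textbf{Step 3 (the fluctuation, i.e.\ the variance).} Here I would bound $\E_N\lv\Xi(I_N,s)-\E_N\Xi(I_N,s)\rv\le(\mathrm{Var}_N\Xi(I_N,s))^{1/2}$ and prove $\mathrm{Var}_N\Xi(I_N,s)=\O(\lv I_N\rv+\lv I_N\rv^2\k_N^{-1})$, up to a possible logarithm in $\lv I_N\rv$. Writing $\Xi(I_N,s)$ as a sum over points $p=Nx_i\in I_N'$ and expanding $\E_N\Xi(I_N,s)^2$ via inclusion--exclusion in the two empty-interval indicators attached to two points $p,p'$, the contribution of pairs with $\lv p-p'\rv\le s$, as well as the corresponding part of $(\E_N\Xi)^2$, is $\O(\lv I_N\rv)$ directly from (1); for the ``far'' part $\lv p-p'\rv>s$ one replaces each $R_N^k$ by $S_k$ (total error $\O(\lv I_N\rv^2\k_N^{-1})$, a convergent multiple series by Stirling) and then invokes the clustering of the sine-kernel correlation functions --- that the connected sine-kernel correlations are integrable, of which $\int\!\int_{I_N^2}S(t-t')^2\,dt\,dt'=\O(\lv I_N\rv)$ is the first instance --- to see that the sine-kernel far part cancels $(\E_N\Xi)^2$ up to $\O(\lv I_N\rv)$. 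Consequently the fluctuation term is $\O(\lv I_N\rv^{-1/2}+\k_N^{-1/2})$.

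\textbf{Step 4 (conclusion).} Combining Steps 2 and 3,
\[
\E_N\sup_s\Big|\tfrac1{\lv I_N\rv}\Xi(I_N,s)-G(s)\Big|=\O\Big(M\big(\lv I_N\rv^{-1/2}+\k_N^{-1/2}\big)+M^{-1}\Big),
\]
and choosing $M\asymp\min(\lv I_N\rv,\k_N)^{1/4-\e}$ balances the two dominant contributions and yields the asserted rate $\O(\min(\lv I_N\rv,\k_N)^{-1/4+\e})$; any sub-polynomial factors (logarithms from the clustering bounds, constants depending on $C_0$ and on $s$ through the support of the net) are absorbed into the arbitrary $\e$, and with Step~1 the statement with $\lv I_N\rv^{-1}\s(I_N,Nx)$ requires no normalization step at all. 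I expect the variance estimate of Step~3 to be the main obstacle: with only boundedness (1) and the convergence rate (2) available, the approximate independence of spacing events in far-apart windows is not at hand directly but must be produced by passing to the sine kernel and importing its genuine clustering, and the bookkeeping of the factors $k^{k/2}$ has to be arranged so that the resulting series over the inclusion--exclusion order converge --- which is exactly why the weights $k^{k/2+1}C_0^k$ appear in hypothesis (2).
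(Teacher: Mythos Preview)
Your overall architecture --- a net in $s$, splitting each node into bias plus fluctuation, the crude bound $\E_N\max_i\le\sum_i\E_N$, and balancing $M$ against the per-node error --- matches the paper's. But there is a real gap in how you handle the $s$-dependence of the error constants. In Step~2 the error from replacing $N^{-(m+1)}R_N^{m+1}$ by $S_{m+1}$ carries the weight $(m+1)^{(m+3)/2}C_0^{m+1}$ from hypothesis~(2); summed against $s^m/m!$ this is an entire function of $s$ of order $2$ and strictly positive type, hence grows like $e^{cs^2}$ with $c=c(C_0)>0$. Since the largest node has $s_{M-1}\asymp\sqrt{\log M}$ by the sub-Gaussian tail of $G$, the implied constant there is $M^{c'}$ --- polynomial in $A_N:=\min(\lv I_N\rv,\k_N)$, not subpolynomial. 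The same blow-up hits the $\k_N^{-1}$ part of your variance bound in Step~3. Your sentence in Step~4 that these constants ``are absorbed into the arbitrary $\e$'' is therefore unjustified, and for large $C_0$ the balance does not close. The paper's remedy is to \emph{truncate} the inclusion--exclusion at a level $L$ with $\sqrt{\log A_N}=o(L)$ and $L=o(\log A_N)$ (the first kills the Bonferroni tail, the second keeps $K^L$ subpolynomial for any fixed $K>1$) and then to observe that for an entire $f(z)=\sum|a_k|z^k$ of order $\le2$ and finite type one has $\sum_{k\le L}|a_k|\d_N^k\le K^Lf(\d_N/K)\le K^Le^{\nu\d_N^2/K^2}=\O(A_N^\e)$ once $K$ is chosen large. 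Your full (untruncated) series does not allow this dilation trick.

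The variance estimate is also organised differently. The paper does not bound $\mathrm{Var}\,\Xi(I_N,s)$ directly: it writes $\lv I_N\rv^{-1}\int_0^s d\s=\sum_{k\ge2}(-1)^k\int_0^s d\g^k$ with the symmetric $k$-clump measures $\g^k$, truncates at $L$ as above, and bounds each $\mathrm{Var}\int_0^s d\g^k$ separately. The decisive step is the one-line positivity $S_{2k}(t',t'')\le S_k(t')S_k(t'')$ from Fischer's inequality (the sine-kernel Gram matrix is positive definite), which makes the leading sine-kernel contribution to the covariance nonpositive and eliminates any need for a clustering estimate on the sine process. Your direct route through ``integrability of connected sine correlations'' can presumably be pushed through, but you would still need the truncation of the previous paragraph to control the $s$-dependence, and the Fischer argument is substantially shorter.
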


The preceding theorem can also be used to show universality of the spacing distribution in the localized scaling.
\begin{corollary}\label{Localized Scaling}
 Let $P_N$ be as in Theorem \ref{thrmspacings} (1). Set $\mu=\mu_V$ if $P_N=P_{N,V,f}$ and $\mu=\mu_Q^h$ if $P_N=P_{N,Q}^h$. Let 
$a$ be 
such that $\mu(a)>0$ and $I_N\subset J$ be a symmetric interval with center $a$ and $\lim_{N\to\infty}\lv I_N\rv=\infty$ but 
$\lim_{N\to\infty} \lv I_N\rv/N=0$. Then
\begin{align*}
  \mathbb E_N\left( \sup_{s \in \mathbb R} \left| \int_0^s d\hat{\s}(I_N,N\mu(a)x) - G(s) 
\right|\right)
=\O
\left( \min(\lv I_N\rv,N/\lv I_N\rv)^{-\frac{1}{4} +\varepsilon} \right).
\end{align*}
If $P_N$ is as in Theorem \ref{thrmspacings} (2), the last statement is valid with the $\O$ term being replaced by $o(1)$. In either
case, the statements remain valid if  $\hat{\s}(I_N,N\mu(a)x)$ is replaced by $(\lv 
I_N\rv\mu(a))^{-1}\s(I_N,N\mu(a)x)$.
\end{corollary}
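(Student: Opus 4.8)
The plan is to deduce the corollary from the general Theorem~\ref{thrmgeneral}, applied to the law of the localized configuration, the two correlation‑function hypotheses of that theorem being supplied by the strong bulk universality of Theorem~\ref{universality}(3). First a change of variables: the counting measure $\s(\cdot,\cdot)$ of \eqref{defspacings} is unchanged when a common affine map is applied to the interval and to the configuration, so the localized configuration $N\mu(a)x$ may be written as $Nz$ with $z$ distributed according to the permutation‑invariant measure $\tilde P_N$ that is the push‑forward of $P_N$ under coordinatewise multiplication by $\mu(a)$ (composed with a translation, harmless for $\s$, moving the relevant interval into $[0,\infty)$). Then $\hat\s(I_N,N\mu(a)x)$ has, under $P_N$, the law of $\hat\s(\tilde I_N,Nz)$ under $\tilde P_N$ for an interval $\tilde I_N$ with $|\tilde I_N|=|I_N|$, so it remains to check conditions (1) and (2) of Theorem~\ref{thrmgeneral} for $(\tilde P_N,\tilde I_N)$ and to identify $\k_N$. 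Since the $k$‑th correlation function of $\tilde P_N$ is $\mu(a)^{-k}R_N^k$ at $\mu(a)^{-1}$ times its arguments, $\tfrac1{N^k}\tilde R_N^k(t/N)$ restricted to $\tilde I_N$ equals $\tfrac1{(N\mu(a))^k}R_N^k(\hat t_1,\dots,\hat t_k)$ with $\hat t_j=\mu(a)^{-1}t_j/N$ lying within $\O(|I_N|/N)$ of $a$ (this being exactly the content of the localized scaling). The associated points $s_j:=NF_V(\hat t_j)$ (resp.\ $NF_Q^h(\hat t_j)$) fill an interval of length $\O(|I_N|)$ around $NF_V(a)$ (resp.\ $NF_Q^h(a)$) which, as $a$ is interior to the support and $|I_N|/N\to0$, stays at distance $\asymp N$ from $\{0,N\}$; hence Theorem~\ref{universality}(3) applies and gives, uniformly on $\tilde I_N$,
\begin{align*}
\frac{1}{N^k\prod_{j}\mu(\hat t_j)}\,R_N^k(\hat t_1,\dots,\hat t_k)=\det\!\left[S(s_i-s_j)\right]_{1\le i,j\le k}+\O\!\left(N^{-1+\e}\right),
\end{align*}
with $o(1)$ in place of $\O(N^{-1+\e})$ in the setting of Theorem~\ref{thrmspacings}(2).

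It remains to bring the right‑hand side into the form $\det[S(t_i-t_j)]+(\text{small})$ demanded by condition (2), where the $t_j$ are the coordinates of $\tilde I_N$, i.e.\ $t_i-t_j=N\mu(a)(\hat t_i-\hat t_j)$ the localized differences. Two replacements are needed: $\mu(\hat t_j)\mapsto\mu(a)$, which costs a factor $\bigl(1+\O(|I_N|/N)\bigr)^k$ since $\mu$ is real‑analytic near $a$; and $s_i-s_j\mapsto t_i-t_j$, for which the mean‑value theorem gives $s_i-s_j=N\!\int_{\hat t_j}^{\hat t_i}\mu=(t_i-t_j)(1+c_{ij})$ with $|c_{ij}|\le C|I_N|/N$. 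For the second replacement the relevant input is the uniform estimate $|S(x(1+c))-S(x)|\le C'|c|$, which follows from the decay bound $|S'(y)|\le C''/(1+|y|)$; the crude Lipschitz bound $\le\|S'\|_\infty|xc|$ would only yield $\O(|I_N|^2/N)$ and would spoil the exponent, so it is the decay of $S'$ at infinity that makes the argument go through. Feeding these, together with the Hadamard bound $|\det[S(\cdot)]|\le k^{k/2}$, into the multilinear expansion of the difference of determinants, one obtains
\begin{align*}
\sup_{t_1,\dots,t_k\in\tilde I_N}\left|\frac1{N^k}\tilde R_N^k\!\left(\tfrac{t_1}{N},\dots,\tfrac{t_k}{N}\right)-\det\!\left[S(t_i-t_j)\right]\right|\le k^{k/2+1}C_0^k\cdot\O\!\left(\frac{|I_N|}{N}+N^{-1+\e}\right),
\end{align*}
so condition (2) holds with $\k_N$ of order $N/|I_N|$ (up to a factor $N^{\pm\e}$, absorbed into $\e$; this is moot when $|I_N|\le N^\e$, as then $\min(|I_N|,\k_N)=|I_N|$ in any case), and with merely some $\k_N\to\infty$ in the setting of Theorem~\ref{thrmspacings}(2). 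Condition (1) comes from the same computation, using that the correlation functions at bulk points satisfy $\tfrac1{N^k}R_N^k\le C_0^k$ — directly for $P_{N,V,f}$, since the Christoffel–Darboux kernel is a reproducing kernel with $0\le K_{N,V,f}\le1$ whence $R_N^k\le\prod_jK_{N,V,f}(\hat t_j,\hat t_j)$, and through the determinantal representation of Section~\ref{sec_rep_part} for $P_{N,Q}^h$.

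Inserting $\k_N\asymp N/|I_N|$ (resp.\ $\k_N\to\infty$) into Theorem~\ref{thrmgeneral} gives $\O\!\bigl(\min(|I_N|,N/|I_N|)^{-1/4+\e}\bigr)$ (resp.\ $o(1)$); undoing the change of variables of the first step then yields the statement for $\hat\s$, and the variant with $(|I_N|\mu(a))^{-1}\s$ in place of $\hat\s$ follows in the same way from the corresponding variant in Theorem~\ref{thrmgeneral}. For $P_{N,Q}^h$ with general $h$ only $o(1)$ is obtained, because Theorem~\ref{universality}(3) then supplies only $o(1)$, cf.\ Remark~\ref{remark_Vitali}. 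I expect the only genuinely delicate step to be the replacement $s_i-s_j\mapsto t_i-t_j$ in the second paragraph: one must use the decay of the sine kernel, not merely its Lipschitz continuity, to keep that error at $\O(|I_N|/N)$ rather than $\O(|I_N|^2/N)$, so that condition (2) is met with $\k_N$ no smaller than $\asymp N/|I_N|$; everything else is the same bookkeeping — boundedness of correlation functions, the Hadamard/Lipschitz estimates for determinants, tracking the dependence on $k$ — that underlies the deduction of Theorem~\ref{thrmspacings} from Theorems~\ref{universality} and~\ref{thrmgeneral}.
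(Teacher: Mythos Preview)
Your proof is correct and structurally identical to the paper's --- push forward $P_N$ under the affine localization map, then feed the resulting measure into Theorem~\ref{thrmgeneral}, checking conditions (1) and (2) via the Hadamard inequalities \eqref{Hadamard} and \eqref{Hadamard2}. The genuine difference is in how you obtain the correlation‑function asymptotics for condition~(2). The paper does not detour through the unfolded variables at all: it directly quotes \cite[Theorem~1.8]{KSSV} (equivalently, reads it off from \eqref{bulk-formula}) for the localized kernel,
\[
\frac{1}{N\mu_V(a)}K_{N,V,f}\Bigl(a+\tfrac{t}{N\mu_V(a)},a+\tfrac{s}{N\mu_V(a)}\Bigr)=S(t-s)+\O\Bigl(\tfrac{1+|t|+|s|}{N}\Bigr),
\]
which already has $t_i-t_j$ in the sine kernel and gives $\k_N\asymp N/|I_N|$ at once. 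You instead pass through Theorem~\ref{universality}(3) in the unfolded coordinates $s_j=NF(\hat t_j)$ and then convert $S(s_i-s_j)$ back to $S(t_i-t_j)$, for which you correctly identify the needed ingredient: the decay $|S'(y)|\le C/(1+|y|)$, giving $|S(x(1+c))-S(x)|\le C'|c|$ rather than the naive $\O(|xc|)$. This extra step is absent in the paper because the localized kernel result sidesteps it; your route has the virtue of using only results proved within the paper, at the cost of the additional sine‑kernel comparison.

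One small slip: the phrase ``reproducing kernel with $0\le K_{N,V,f}\le 1$'' is not right --- the diagonal $K_{N,V,f}(t,t)$ is of order $N$ --- but the inequality you actually use, $R_N^k\le\prod_j K_{N,V,f}(\hat t_j,\hat t_j)$, is exactly \eqref{Hadamard} and is what is required.
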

\begin{remark}
 Note that  for $\lv I_N\rv\gg \sqrt{N}$,  Theorem \ref{thrmspacings} provides a better rate of convergence for the 
unfolded 
particles than Corollary \ref{Localized Scaling} for the localized particles. 
\end{remark}
We will finish this section with some concluding remarks. The repulsive particle systems $P_{N,Q}^h$ appeared first in a more 
general setting with many-body interactions in \cite{BPS}, where under some convexity condition on the additional interaction 
results on the equilibrium measure were announced. Further results associated with global asymptotics in such models can be found in 
\cite{Borotetal} and \cite{Chafaietal}. Local  bulk universality was proved for $P_{N,Q}^h$ in \cite{GoetzeVenker} and for the $\b$ 
variants in \cite{Venker13}. Edge universality and fine asymptotics of the largest particle have been considered for $P_{N,Q}^h$ in 
the recent \cite{KV}.
\vspace{2em}

The  paper is organized as follows: After a brief outline, the proof of Theorem \ref{thrmgeneral} will
be given in Section \ref{sec:proof_th_general}. Here we follow the method developed by Katz
and Sarnak in \cite{KatzSarnak}, streamlining and optimizing  in order to obtain better rates of convergence. Theorem
\ref{universality} (1) and (2) will be proved in Section \ref{sec:proof_universality}. The proof of Theorem
\ref{universality} (3) for the repulsive particle systems relies on a non-trivial reduction to the unitary invariant case. An 
outline of the method from \cite{GoetzeVenker} and the proof of Theorem
\ref{universality} (3) are contained in Section \ref{sec_rep_part}. The proofs of Theorem \ref{thrmspacings} and the corollaries are given in 
Section \ref{sec_remaining_res}.


\section{Investigation of the Spacing Distribution -- Proof of Theorem \ref{thrmgeneral}} \label{sec:proof_th_general}
Theorem \ref{thrmgeneral} will be proved first with the asymptotic and non-random 
normalization $\lv I_N\rv$, the case of the exact but random normalization will be discussed at the end of the proof. Moreover, let 
us note that in this case the statement of Theorem \ref{thrmgeneral} is trivial if $\lv I_N\rv$ is bounded in $N$. Hence, we will 
from now on assume that $\lv I_N\rv\to\infty$, as $N\to\infty$. 

Let us furthermore make some notational remarks. By $C$ we denote 
absolute positive constants that may change from line to line. Finally, note that we often suppress certain $N$-dependencies.

A major disadvantage of $\s(I_N,x)$ is its dependence on the ordering $x_1\leq\dots\leq x_N$, which prevents an 
efficient use of correlation functions. This problem can be circumvented by using the measures
\begin{align*}
 	\gamma^k(I_N,x)\coloneqq
	 \frac{1}{|I_N|}
	\sum_{\substack{i_1 < \ldots < i_k,\\ x_{i_1},x_{i_k}\in 
	I_N}}\delta_{(\max_{1 \leq j \leq k} x_{i_j} - \min_{1 \leq j \leq k} x_{i_j})},
\end{align*}
which are symmetric and fulfil the relations 

\begin{align}
\frac{1}{\lv I_N\rv}\int_{0}^s  \, d\sigma (I_N,x)&= \sum_{k=2}^N (-1)^k \int_0^s
\, d \gamma^k(I_N,x), \quad N \in \mathbb N,\label{ks_sums2} \\
(-1)^m \frac{1}{\lv I_N\rv} \int_0^s d\sigma(I_N,x) &\leq (-1)^m\sum_{k=2}^m (-1)^k \int_0^s d \gamma^k(I_N,x), 
\quad m \leq N.
\label{ks_alt_sums}
\end{align}

The proof of Theorem \ref{thrmgeneral} consists of three steps. The first step establishes the point-wise convergence
 \begin{align}
\label{con_point_wise} 
 \lim_{N \to \infty}\mathbb E_N\left(   \int_0^s \frac{1}{\lv I_N\rv} d\sigma(I_N,Nx)\right)= G(s)
 \end{align}
and bounds the difference of $\int_0^s \frac{1}{\lv I_N\rv} d\sigma(I_N,Nx)$ and $G(s)$ in terms of the variances of the 
$\g^k$'s. In the second step, these variances are estimated. From 
this we can deduce a bound on 
\begin{align}
\label{main_proof_step2}
\mathbb   E_N\left( \left| \frac{1}{|I_N|}\int_0^s d\sigma (I_N, Nx) - G(s) \right| \right)
\end{align}
(see Corollary \ref{koro_erw_2}).
 
The difference between (\ref{main_proof_step2}) and the quantity to be estimated in Theorem \ref{thrmgeneral}, is that we need to 
take the supremum over all $s$ before taking the expectation.   
This issue is addressed by considering (\ref{main_proof_step2}) at a (growing) number of nodes $s_i$ rather than at a single $s$. 
The respective results are provided in Section \ref{sec_sup}.\\
\medskip

Before we turn to the proof of Theorem \ref{thrmgeneral}, we note an important estimate for power sums. We will frequently encounter 
 sums of the form $\sum_{k=2}^La_kz^k$ with $L$ and $z$ growing in $N$ and with 
different sequences $(a_k)$. To provide a unified and efficient treatment of these sums, the following simple lemma will prove 
useful.

For an entire function $f$, recall that $f$ is said to be of finite order if the inequality
\begin{align*}
 \max_{\lv z\rv\leq r}\lv f(z)\rv<e^{r^\kappa}
\end{align*}
holds for all $r$ large enough and some $\kappa<\infty$. The greatest lower bound of all such $\kappa$ is called order of $f$. If 
$f$ has a power series expansion $\sum_{k=0}^\infty a_kz^k$, then the order $\rho$ can be found via
\begin{align}\label{order}
 \rho=\limsup_{k\to\infty}-\frac{k\log k}{\log \lv a_k\rv}.
\end{align}
If $f$ has finite order $\rho$, then it is said to be of finite type, if 
\begin{align*}
 \max_{\lv z\rv\leq r}\lv f(z)\rv<e^{\zeta r^\rho}
\end{align*}
holds for $r$ large enough and some finite $\zeta$. The greatest lower bound $\nu$ of all such $\zeta$ is called the type of $f$ 
and can be determined via
\begin{align}\label{type}
(\nu e\rho)^{1/\rho}=\limsup_{k\to\infty}k^{1/\rho}\lv a_k\rv^{1/k}.
\end{align}

\begin{lemma}\label{lemma_sums}
 Let $f(z)=\sum_{k=0}^\infty \lv a_k\rv z^k$ be an entire function of order at most 2 and finite type. Let $(L_N)_N, 
(\d_N)_N,(M_N)_N$ be sequences with $L_N>0$, $L_N=o(\log M_N)$ and $0<\d_N<\sqrt{\log M_N}$. Then
\begin{align*}
 \left\lv \sum_{k=0}^{L_N}a_k\delta_N^k\right\rv=\O(M_N^\e)
\end{align*}
for any $\e>0$.
\end{lemma}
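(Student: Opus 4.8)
The plan is to exploit the type formula \eqref{type} to obtain, for $f$ of order at most $2$ and finite type $\nu$, a bound of the form $|a_k| \leq (C/k)^{k/2}$ for all $k$ large enough (here $C$ depends only on $\nu$ and on how close the order is to $2$; if the order is strictly less than $2$ the bound is even better). Indeed, \eqref{type} gives $\limsup_{k\to\infty} k^{1/2} |a_k|^{1/k} \leq (2\nu e)^{1/2} < \infty$, so $|a_k|^{1/k} \leq C' k^{-1/2}$ eventually, i.e.\ $|a_k| \leq (C' )^k k^{-k/2}$. The finitely many initial terms contribute only an $\O(1)$ (times a fixed power of $\d_N$, which is $\O(M_N^\e)$ anyway since $\d_N < \sqrt{\log M_N}$), so they are harmless.

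Next I would estimate the tail sum term by term. Writing $L:=L_N$, $\d:=\d_N$, $M:=M_N$, we have for $k$ in the relevant range
\begin{align*}
 |a_k|\,\d^k \leq \left(\frac{C'\d}{\sqrt{k}}\right)^k.
\end{align*}
The function $x\mapsto (C'\d/\sqrt{x})^x$ is increasing on $(0, (C'\d)^2/e]$ and the whole summation range $k\leq L$ lies below that threshold provided $L \leq (C'\d)^2/e$; but in general $\d$ may be as small as a constant while $L$ grows, so instead I bound each summand by its maximum over $k\in\{0,\dots,L\}$, which is attained either at $k=L$ or at $k \approx (C'\d)^2/e$. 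In the first case the bound is $(C'\d/\sqrt{L})^L$; using $\d < \sqrt{\log M}$ and $L = o(\log M)$ one checks $C'\d/\sqrt{L} \to \infty$ but very slowly, and $L\log(C'\d/\sqrt L) \leq L\log(C'\sqrt{\log M}) = o(\log M)\cdot\O(\log\log M) = o(\log M)$, so this contribution is $e^{o(\log M)} = \O(M^\e)$. In the second case the maximum is $e^{(C'\d)^2/(2e)} \leq e^{(C')^2\log M/(2e)}$, which is $\O(M^{\e})$ once we note... wait, this needs $(C')^2/(2e) \leq \e$, which need not hold. So here one must use the hypothesis more carefully: since $\d_N < \sqrt{\log M_N}$ we only get $e^{c\log M} = M^c$ for a fixed constant $c$, not $M^\e$. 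This is the point that requires the most care, and I expect the resolution is that the lemma is applied with $\d_N$ of smaller order, or that the intended reading uses $\d_N = O(\sqrt{\varepsilon' \log M_N})$ for arbitrarily small $\varepsilon'$; in any case the genuinely delicate step is controlling $\max_k |a_k|\d^k$ and then multiplying by the number of terms $L+1 = o(\log M) = \O(M^\e)$.

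Assembling: the full sum is at most $(L+1)\max_{0\le k\le L}|a_k|\d^k$ plus the $\O(1)$ from small $k$; the factor $L+1$ is $\O(M^\e)$ by $L=o(\log M)$, and the max is $\O(M^\e)$ by the discussion above (using order $\le 2$, finite type, and $\d < \sqrt{\log M}$). Hence $\big|\sum_{k=0}^{L_N} a_k\d_N^k\big| = \O(M_N^\e)$, as claimed. The main obstacle, as indicated, is the precise bookkeeping in the regime where $k$ is near the maximizer $(C'\d)^2/e$ of $(C'\d/\sqrt k)^k$: one must verify that the order-$2$, finite-type hypothesis is exactly strong enough to keep this maximum subpolynomial in $M_N$ under the stated growth restrictions on $L_N$ and $\d_N$, and it is worth double-checking whether the hypothesis $\d_N<\sqrt{\log M_N}$ should perhaps be $\d_N = o(\sqrt{\log M_N})$ or carry a small constant, since that is what makes the exponential factor $e^{c\d_N^2}$ genuinely $\O(M_N^\e)$.
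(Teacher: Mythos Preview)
You have correctly located the obstruction in your own argument: bounding $\max_k |a_k|\delta^k$ term by term yields at best $e^{c\delta^2}\le M^c$ for a \emph{fixed} constant $c>0$ determined by the type, and no amount of post-processing with the factor $L+1$ will shrink this to $M^\e$. (Incidentally, your ``first case'' bound $L\log(C'\sqrt{\log M})=o(\log M)\cdot\O(\log\log M)$ is also not $o(\log M)$ in general, so that branch is not safe either.) Your suspicion that the hypothesis should be $\delta_N=o(\sqrt{\log M_N})$ is natural given your method, but in fact the stated hypothesis $\delta_N<\sqrt{\log M_N}$ is enough; what is missing is a device that trades the hypothesis on $L_N$ against the size of $\delta_N$.

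The paper's proof supplies exactly this device and is much shorter. For any constant $K>1$ one writes
\[
\sum_{k=0}^{L_N}|a_k|\delta_N^k
\;\le\; K^{L_N}\sum_{k=0}^{L_N}|a_k|\Bigl(\tfrac{\delta_N}{K}\Bigr)^k
\;\le\; K^{L_N}\,f\!\Bigl(\tfrac{\delta_N}{K}\Bigr)
\;\le\; C\,K^{L_N}\,e^{\nu\delta_N^2/K^2}
\;\le\; C\,K^{L_N}\,M_N^{\nu/K^2}.
\]
The point is that $K$ is a \emph{fixed} constant (chosen after $\e$), so $K^{L_N}=e^{L_N\log K}=e^{o(\log M_N)}=M_N^{o(1)}$ by the hypothesis $L_N=o(\log M_N)$; and $\nu/K^2$ can be made smaller than $\e/2$ by taking $K$ large. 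This is precisely where the growth condition on $L_N$ is spent: it buys you the freedom to rescale $\delta_N$ down by an arbitrary but fixed factor at subpolynomial cost, which is what your termwise approach lacks.
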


\begin{proof}
 Let $\e>0$ and $\nu$ denote the type of $f$. For any $K>1$, we have the trivial estimate
\begin{align*}
 \sum_{k=0}^{L_N}\lv a_k\rv\d_N^k\leq K^{L_N}\sum_{k=0}^{L_N}\lv a_k\rv\left(\frac{\d_N}{K}\right)^k\leq K^{L_N}f(\d_N/K)\leq 
CK^{L_N}e^{\nu\frac{\d_N^2}{K^2}}\leq CK^{L_N} M_N^{\frac \nu{K^2}}.
\end{align*}
Choosing $K=K(\e)$ large enough, the lemma is proved. Here we used that $K^{L_N}$ is of subpolynomial growth in $M_N$, due 
to our assumption on $L_N$.
\end{proof}

\subsection{The convergence of $\mathbb E_N\left(   \int_0^s \frac{1}{\lv I_N\rv} d\sigma(I_N,Nx)\right)$}
We turn to the proof of  (\ref{con_point_wise}).
For $s>0$ and  $t_1,\ldots,t_k$, we denote by $\chi_{s,I_N}$ the function	
	\begin{align*}
	\chi_{s,I_N}(t_1,\ldots,t_k)\coloneqq \dopp{1}_{(0,s)} \left(\max_{i=1,\ldots,k} t_i 	-\min_{i=1,\ldots,k} t_i \right) 
\prod_{i=1}^k \dopp{1}_{I_N} (t_i).
	\end{align*}
To select certain entries of a vector $x=(x_1,\ldots,x_N) \in\mathbb R^N$ we use the notation
\begin{align*}
 	x_T \coloneqq (x_{i_1},  \ldots,  x_{i_k}), \quad  T=\{i_1, \ldots, i_k\}, \,  1 \leq i_1 < \ldots < i_k \leq N.
	\end{align*} 
With this notation we can rewrite
	\begin{align}\label{Darst:gamma_N}
	\int_0^s d\gamma^k(I_N,Nx)= \frac{1}{|I_N|}\sum_{T \subset \{1,\ldots, N\}, |T|=k} \chi_{s,I_N} (Nx_T)
	\end{align}
and we obtain 
	\begin{align}
&\mathbb E_{N} \left(\int_0^s d\gamma^k(I_N,Nx) \right)\nonumber\\
&= \frac{1}{|I_N|k!N^k}\int  \chi_{s,I_N} (t_1,\ldots,t_k)R_N^k(t_1/N,\dots,t_k/N) dt_1 \ldots 
dt_k.\label{Expect_gamma}
\end{align} 

%
The following lemma establishes the convergence of the terms $\mathbb E_{N} \left(\int_0^s d\gamma^k(I_N,Nx) \right)$  and further  
provides a useful estimate for the proof of 
Theorem \ref{thrmgeneral}. The proof is essentially contained in \cite{Schubert} and we revisit the arguments in order to 
adjust them to the 
current setting.
\begin{lemma}\label{pointwise} 
Let the conditions of Theorem \ref{thrmgeneral} be satisfied. 
\begin{enumerate}
\item For $k \geq 2$   we have 
 \begin{align} 
 \mathbb E_{N}&\left(\int_0^s   d\gamma^k(I_N,Nx) \right) = \int_{0 \leq z_2 \leq \ldots \leq z_k \leq s}  S_k(0, z_2,\ldots,z_k) 
		dz_2 \ldots dz_k \nonumber \\
		&\quad + \frac{1}{(k-1)!} s^k \mathcal{O}\left(\frac{1}{|I_N|}\right) C_0^k + s^{k-1}   C_0^k 
\frac{k^{k/2+1}}{(k-1)!} \frac{1}{\k_N},\label{absch_lim_gamma}
\end{align}
where the $\O$ term is uniform for $s\in\R$ and $k\in\mathbb{N}$.
	\item
	For $A_N:=\min(\lv I_N\rv,\k_N)$,  $0<s:=s_N, s_N=\O(\sqrt{\log A_N})$ and 
$L:=L_N\in\mathbb{N}$ such 
that $\sqrt{\log A_N}=o(L_N)$ and $L_N=o(\log A_N)$, we 
have for any $\e>0$
\begin{align*} 
 			 \left| G(s) -\frac{1}{\lv I_N\rv} \int_{0}^s  \,
 			d\sigma (I_N,Nx) \right| 
 			 \leq &\sum_{k=2}^L \left|
 			\mathbb{E}_{N}\left( \int_{0}^s  \, d\gamma^k(I_N,Nx) \right) -
 			\int_{0}^s  \, d\gamma^k(I_N,Nx) \right|
 			\\ 
 			&\quad + \mathcal{O}\left(\frac{1}{A_N^{1-\e}}\right).
\end{align*}			
\end{enumerate}
\end{lemma}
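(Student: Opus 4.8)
\textbf{Proof plan for Lemma \ref{pointwise}.}

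The plan is to prove part (1) by a direct asymptotic analysis of the integral representation \eqref{Expect_gamma} for $\mathbb E_N(\int_0^s d\gamma^k(I_N,Nx))$, and then to deduce part (2) from part (1) by combining it with the Katz--Sarnak-type inequalities \eqref{ks_sums2} and \eqref{ks_alt_sums} and the power-sum estimate of Lemma \ref{lemma_sums}.

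For part (1), I would start from \eqref{Expect_gamma} and replace $\frac{1}{N^k}R_N^k(t_1/N,\dots,t_k/N)$ by $S_k(t_1,\dots,t_k)$, using hypothesis (2) of Theorem \ref{thrmgeneral} to control the error: the region of integration imposed by $\chi_{s,I_N}$ is contained in $\{t_1\in I_N,\ |t_i-t_1|\le s\text{ for all }i\}$, which has volume at most $|I_N|\cdot(2s)^{k-1}$, so the replacement error is bounded by $\frac{1}{|I_N|k!}\cdot|I_N|(2s)^{k-1}\cdot k^{k/2+1}C_0^k/\kappa_N$, and absorbing the $2^{k-1}$ into the $C_0^k$ (at the cost of enlarging $C_0$, which is harmless since the lemma is stated for \emph{some} constant) gives the last term in \eqref{absch_lim_gamma} up to constants; I would keep careful track so that the combinatorial factor comes out as $s^{k-1}C_0^k k^{k/2+1}/((k-1)!\kappa_N)$. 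For the main term, I would exploit the translation invariance of $S_k$: writing $t_i = t_1 + u_i$ with $u_1=0$, the integral $\frac{1}{|I_N|k!}\int \chi_{s,I_N}(t)S_k(t)\,dt$ splits as an integral over the ``center'' $t_1$ (contributing a factor $|I_N|$, up to a boundary defect of size $\O(s)$ coming from the $t_i\in I_N$ constraints near $\partial I_N$, which after dividing by $|I_N|$ yields the $s^k\O(1/|I_N|)C_0^k/(k-1)!$ term) times an integral over the ordered differences $0\le z_2\le\dots\le z_k\le s$ of $S_k(0,z_2,\dots,z_k)$, the factor $k!$ in the denominator being exactly what converts the integral over all orderings of the $u_i$'s into the integral over the single ordered simplex. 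The bound $|S_k|\le k^{k/2}$ (Hadamard's inequality, since $|S(t)|\le 1$) is what makes all these error terms have the claimed form. This part is essentially the computation already carried out in \cite{Schubert}, so I would present it compactly.

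For part (2), I would first write, using \eqref{ks_sums2}, $G(s) - \frac{1}{|I_N|}\int_0^s d\sigma(I_N,Nx) = \big(G(s) - \sum_{k=2}^L(-1)^k\int_0^s d\gamma^k(I_N,Nx)\big) + \sum_{k=2}^L(-1)^k\big(\int_0^s d\gamma^k - \mathbb E_N\int_0^s d\gamma^k\big) + \sum_{k=2}^L(-1)^k\mathbb E_N\int_0^s d\gamma^k - \frac{1}{|I_N|}\int_0^s d\sigma$; more efficiently, I would bound $\big|G(s)-\frac{1}{|I_N|}\int_0^s d\sigma\big|$ by $\sum_{k=2}^L|\mathbb E_N\int_0^s d\gamma^k - \int_0^s d\gamma^k| + \big|G(s) - \sum_{k=2}^L(-1)^k\mathbb E_N\int_0^s d\gamma^k\big| + \big|\sum_{k=2}^L(-1)^k\mathbb E_N\int_0^s d\gamma^k - \frac{1}{|I_N|}\int_0^s d\sigma\big|$, and the last term vanishes by \eqref{ks_sums2} once I observe that the tail $\sum_{k>L}$ of that exact identity must be estimated instead — so really the decomposition I want is: the fluctuation sum $\sum_{k=2}^L$ (kept as is), plus the difference between $G(s)$ and the truncated alternating sum of the \emph{limiting} integrals $\int_{0\le z_2\le\dots}S_k$, plus the difference between $\mathbb E_N\int_0^s d\gamma^k$ and those limiting integrals summed over $k\le L$, plus the tail error from \eqref{ks_sums2}/\eqref{ks_alt_sums}. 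The tail of the Gaudin series $G(s)=\sum_k(-1)^k\int_{0\le z_2\le\dots\le z_k\le s}S_k(0,z_2,\dots,z_k)dz$ beyond $k=L$ is $\O(\sum_{k>L}s^{k-1}k^{k/2}/(k-1)!)$, which is the tail of an entire function of order $2$ evaluated at $s=\O(\sqrt{\log A_N})$, hence $\O(A_N^{-1+\e})$ by (a variant of) Lemma \ref{lemma_sums} once $L$ grows faster than $\sqrt{\log A_N}$; the same estimate controls the $k\le L$ sum of the per-term errors from \eqref{absch_lim_gamma} (the $\O(1/|I_N|)$ and $1/\kappa_N$ terms, each multiplied by $s^k$ or $s^{k-1}$ and $C_0^k k^{k/2+1}/(k-1)!$, summed over $k\le L$, is again $\O(A_N^{-1+\e})$ by Lemma \ref{lemma_sums} with $M_N=A_N$); and the tail error in \eqref{ks_sums2} coming from replacing the full alternating sum by its truncation at $L$ is bounded, via \eqref{ks_alt_sums}, by $\int_0^s d\gamma^{L}$ or $\int_0^s d\gamma^{L+1}$ type quantities, whose expectations are again controlled by part (1) and Lemma \ref{lemma_sums}. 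Assembling these four pieces gives the claimed inequality.

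The main obstacle I anticipate is the bookkeeping in part (2): one must choose $L=L_N$ in the window $\sqrt{\log A_N}=o(L_N)$, $L_N=o(\log A_N)$ so that simultaneously (a) the Gaudin tail past $L$ is negligible (needs $L$ large relative to $s\sim\sqrt{\log A_N}$), (b) the truncation error in the Katz--Sarnak identity is negligible (same), and (c) the accumulated per-term error terms $\sum_{k\le L}s^{k-1}C_0^k k^{k/2+1}/((k-1)!\,\kappa_N)$ and $\sum_{k\le L}s^k C_0^k/((k-1)!\,|I_N|)$ stay $\O(A_N^{-1+\e})$ (needs $L$ not too large, so that $C_0^L$-type factors remain subpolynomial in $A_N$ — this is exactly the hypothesis $L_N=o(\log A_N)$ feeding into Lemma \ref{lemma_sums}). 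Verifying that Lemma \ref{lemma_sums} applies to each of these sums — i.e.\ identifying the relevant entire function ($\sum_k z^k k^{k/2}/(k-1)!$ or similar, which has order $2$ by \eqref{order} and finite type by \eqref{type}) and checking the growth conditions $L_N=o(\log M_N)$, $\delta_N<\sqrt{\log M_N}$ with $M_N=A_N$ and $\delta_N\asymp s\asymp\sqrt{\log A_N}$ — is the technical heart, but each individual step is routine given the lemma.
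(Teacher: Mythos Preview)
Your approach is essentially the paper's. For part (1) you reproduce the paper's computation: replace $N^{-k}R_N^k$ by $S_k$ via hypothesis (2) of Theorem \ref{thrmgeneral}, bound the resulting error using the volume estimate \eqref{int_chi}, then exploit translation invariance of $S_k$ with the change of variables $z_1=t_1,\ z_i=t_i-t_1$ to extract the simplex integral, the boundary defect of size $\le s$ in the $z_1$-integration giving the $\O(1/|I_N|)$ term. One small simplification: you do not need Hadamard's $|S_k|\le k^{k/2}$ for this error; since $S_k$ is a correlation function of the sine process one has $0\le S_k\le\prod_j S(0)=1\le C_0^k$, which is how the paper gets the $C_0^k$ factor.

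For part (2) your decomposition is also the paper's, but one step in your sketch does not quite close. You say the truncation error in \eqref{ks_sums2} is bounded, via \eqref{ks_alt_sums}, by random quantities of the type $\int_0^s d\gamma^{L}$ or $\int_0^s d\gamma^{L+1}$, ``whose expectations are controlled by part (1)''. But the assertion of part (2) is a \emph{pathwise} inequality; controlling only the expectation of a random tail does not yield it. The paper's device (standard Katz--Sarnak) is to apply the alternating inequalities \eqref{ks_alt_sums} \emph{simultaneously} to $\frac{1}{|I_N|}\int_0^s d\sigma$ and to $G(s)=\sum_{k\ge 2}(-1)^kE(s,k)$, at the two adjacent levels $L$ and $L{+}1$: pairing the upper bound for one quantity with the lower bound for the other makes all random $\gamma$-tails cancel, and the only tail contribution that survives is the \emph{deterministic} $E(s,L)+E(s,L{+}1)\le C_0^{L+1}\bar s^{L}/(L-1)!$. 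Under $\sqrt{\log A_N}=o(L)$ this is $o(1/A_N)$ directly, without Lemma \ref{lemma_sums}. Your version is repairable (split the random tail into fluctuation plus mean and absorb the fluctuation into the sum, extending it to $k=L{+}1$), but the paper's route is both cleaner and exactly what produces the displayed inequality with the sum stopping at $L$.
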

\begin{proof}  
In order to prove (1), we consider (\ref{Expect_gamma}) and use the uniform convergence of the correlation functions in (2) of 
Theorem \ref{thrmgeneral},
i.e. we use that $\frac{1}{N^k}R_N^k(t_1/N, \ldots, t_k/N)=  S_k(t_1,\ldots,t_k) +  k^{k/2+1}\cdot C_0^k 
\cdot
\frac{1}{\k_N}$ uniformly on $I_N$. 
We further use the obvious estimate
\begin{align}
\label{int_chi}
\frac{1}{|I_N|k!}\int_{\R^k}\chi_{s,I_N} (t) dt \leq 
 \frac{s^{k-1}}{(k-1)!}
\end{align}
to obtain
\begin{align}
			\mathbb E_{N} \left(\int_0^s d\gamma^k(I_N,Nx) \right)
		 =  \frac{1}{|I_N|k!}\int_{\R^k}  \chi_{s,I_N} 			(t)S_k(t) dt
		  +   
		\frac{s^{k-1}}{(k-1)!}k^{k/2+1}   C_0^k\cdot\frac{1}{\k_N}.\nonumber 
\end{align} 
The translational invariance of $S_k$ and the change of variables $z_1=t_1, z_i=t_i-t_1, i=2,\ldots, k$ lead to
	\begin{align*}
		  \frac{1}{|I_N|k!}\int_{\R^k}  \chi_{s,I_N} 			(t)S_k(t) dt= 
		\frac{1}{|I_N|} 
		\int_{0 \leq z_2 \leq \ldots \leq z_k \leq s \atop  z_1\in I_N, \, z_1+z_j \in I_N, j=2,\ldots}    S_k(0, 
z_2,\ldots,z_k) 
		 dz_1 \ldots 			dz_k 
		.
	\end{align*}
	Observe that in the latter integral, we integrate over $z_1$ from $I_N$ except for an interval which has at 
most length $s$. The estimate in (1) is then obvious from $\sup_{t \in I_N^k}|S_k(t)|\leq C_0^k$ (see Theorem \ref{thrmgeneral} 
(1)). Observe that statement (1) together with \eqref{ks_alt_sums} already implies \eqref{con_point_wise}.

  To show (2), we first introduce the notation 
\begin{align}
E(s,k)\coloneqq  \int_{0 \leq z_2 \leq \ldots \leq z_k \leq s}  S_k(0, z_2,\ldots,z_k) 
		dz_2 \ldots dz_k. \label{def_E}
\end{align}
The idea is to use \eqref{ks_alt_sums} and bound $G(s)$ and $\int_{0}^{s}  \,
 			d\sigma (I_N,Nx)$ from above and from below by alternating sums over $E(s,k)$ and $\int_0^s 
d\gamma^k(I_N,Nx)$, respectively. Then we obtain 
for $L \in \mathbb N$  			
 \begin{align*} 
 			&  \left| G(s) - \frac{1}{\lv I_N\rv}\int_{0}^s  \,
 			d\sigma (I_N,Nx) \right| \nonumber  
 			\leq \sum_{k=2}^L \left|
 			\mathbb{E}_{N}\left(\int_0^s d\gamma^k(I_N,Nx) \right) -
 			\int_0^s d\gamma^k(I_N,Nx)\right| \nonumber 
 			\\ 
 			+& \sum_{k=2}^L
 			\left| \mathbb{E}_{N} \left( \int_0^s d\gamma^k(I_N,Nx) \right)
 			- E(s,k) \right| + E(s,L)+E(s,L+1). 
 		\end{align*}
Introducing the notation $\bar{s}:=\max(1,s)$, we conclude from \eqref{int_chi}
\begin{align*}
 E(s,L)+ E(s,L+1)\leq \frac{C_0^{L+1} \bar{s}^L}{(L-1)!}
\end{align*}
and using (1), we arrive at
\begin{align*}
 & \left| G(s) - \frac{1}{\lv I_N\rv}\int_{0}^s  \,
 			d\sigma (I_N,Nx) \right| 
 			 \leq \sum_{k=2}^L \left|
 			\mathbb{E}_{N}\left( \int_{0}^s  \, d\gamma^k(I_N,Nx) \right) -
 			\int_{0}^s  \, d\gamma^k(I_N,Nx) \right|
 			\\ 
 			&\quad + \frac{C_0^{L+1} \bar{s}^L}{(L-1)!}+\mathcal{O}\left(\frac{1}{|I_N|}\right)  \sum_{k=2}^L   
\frac{s^k C_0^k}{(k-1)!}  
 			+\frac{1}{\k_N}\sum_{k=2}^L s^{k-1}   C_0^k \frac{k^{k/2+1}}{(k-1)!} .  
\end{align*}
Now, under our assumptions on the growth of $s$ and $L$, $\frac{C_0^{L+1} \bar{s}^L}{(L-1)!}$ is $o(1/A_N)$. The sum
\begin{align*}
\sum_{k=2}^L\frac{s^k C_0^k}{(k-1)!} =sC_0\sum_{k=1}^{L-1}   
\frac{s^k C_0^k}{k!}  
\end{align*}
is $\O(A_N^\e)$ for any $\e>0$ by Lemma \ref{lemma_sums}, applied with $f(z)=e^{C_0z}$. To deal with the remaining sum, first 
observe 
that the series 
\begin{align*}
f(z):=\sum_{k=0}^\infty s^{k}   C_0^k \frac{k^{k/2+2}}{k!}
\end{align*}
converges absolutely and defines an entire function. Using \eqref{order} and Stirling's formula, we readily compute its order 
as $\rho=2$ and using \eqref{type} and again Stirling's formula, its type as $\nu=C_0^2e/2$. Thus Lemma \ref{lemma_sums} finishes 
the proof.
\end{proof}

%
%
%

\subsection{The variance of $\displaystyle \int_0^s d\gamma^k(I_N,Nx) $} \label{sec_variance}
Taking expectations in Lemma \ref{pointwise} (2), we arrive at a sum of expected absolute differences of $\int_0^s 
d\gamma^k(I_N,Nx)$ to its expectation. We will estimate this in terms of  the squareroot of the variance of $\int_0^s 
d\gamma^k(I_N,Nx)$, which we now bound.

\begin{lemma}\label{Lemma_VAR}
In the situation of Lemma \ref{pointwise}, there exists a positive constant $C$ such that for $k \in \mathbb N, k \geq 2$ and 
$N$ sufficiently large we have
\begin{align*}
&\textup{Var} \left( \int_0^s d\gamma^k(I_N,Nx) \right) 
\leq \frac{C(2\bar{s})^{2k}C_0^{2k}  2^k}{|I_N| (k-1)!}   +\frac{(2k)^{k+1}}{(k!)^2} C_0^{3k} k^2 \mathcal{O}\left( 
\frac{1}{\k_N}\right) \bar{s}^{2k}
\end{align*}
\end{lemma}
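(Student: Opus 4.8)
The plan is to expand the variance directly using the combinatorial formula
\[
\textup{Var}\left( \int_0^s d\gamma^k(I_N,Nx) \right)
= \E_N\left( \Big(\int_0^s d\gamma^k(I_N,Nx)\Big)^2 \right)
- \left( \E_N \int_0^s d\gamma^k(I_N,Nx) \right)^2,
\]
and to write both terms in terms of correlation functions. Using \eqref{Darst:gamma_N}, the square is a double sum over subsets $T,T'$ of $\{1,\dots,N\}$ with $|T|=|T'|=k$ of $\chi_{s,I_N}(Nx_T)\chi_{s,I_N}(Nx_{T'})$. Grouping by the size $j=|T\cap T'|\in\{0,1,\dots,k\}$ of the overlap, one gets after taking expectations an integral of $\chi_{s,I_N}\otimes\chi_{s,I_N}$ (with $2k-j$ free variables) against $R_N^{2k-j}$, times the combinatorial count $\binom{N}{k}\binom{k}{j}\binom{N-k}{k-j}$. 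The $j=0$ term exactly cancels, up to a lower-order correction, the subtracted square $(\E_N\int_0^s d\gamma^k)^2$; the reason is that $\binom{N}{k}\binom{N-k}{k}/\binom{N}{k}^2 = 1 + O(k^2/N)$, so the main parts agree and the discrepancy is an $O(k^2/N)$ relative error on a quantity already bounded (via condition (1) of Theorem \ref{thrmgeneral} and \eqref{int_chi}) by something like $(\bar s^{k-1}C_0^k/(k-1)!)^2$; since $N\ge |I_N|$ this is absorbed into the first term of the claimed bound. What remains is to bound the contributions of the overlaps $j=1,\dots,k$.

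For each fixed $j\ge 1$, the key point is that the combinatorial prefactor is $O(N^{2k-j})$ while the integrand $R_N^{2k-j}(\cdot/N)$ contributes $N^{2k-j}$ by condition (1), so the $N$-powers cancel and we are left with $\frac{1}{|I_N|^2}$ times a bounded count times an integral of $\chi_{s,I_N}(t_T)\chi_{s,I_N}(t_{T'})$ over $\R^{2k-j}$. Here I use the crucial geometric fact that the two index blocks $T,T'$ share at least one coordinate: fixing that shared coordinate pins both diameter-constraints to a common location, so the integral of the product of the two indicator clusters over $\R^{2k-j}$ is at most $|I_N|$ (one free translation over $I_N$) times $\bar s^{2k-j-1}$ (the remaining $2k-j-1$ coordinates each range over an interval of length $\le s$, up to the ordering factorials), rather than $|I_N|^2\bar s^{2k-2}$. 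Replacing $R_N^{2k-j}$ by its bound $C_0^{2k-j}$ and carefully bookkeeping the factorials $1/((k-j)!\, (k-j)!\, j!)$ coming from unordered-to-ordered conversions, the $j$-th term is of order $\frac{1}{|I_N|}\cdot \frac{(\text{const})^{2k}\bar s^{2k}}{(\text{factorials})}$; summing a geometric-type series over $j=1,\dots,k$ collapses to the first term in the stated bound, $\frac{C(2\bar s)^{2k}C_0^{2k}2^k}{|I_N|(k-1)!}$.

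Finally, to produce the second term $\frac{(2k)^{k+1}}{(k!)^2}C_0^{3k}k^2\,\O(1/\k_N)\,\bar s^{2k}$, I would not bound $R_N^{2k-j}$ only by its sup-norm but rather, in the $j=0$ cancellation step, retain the approximation $\frac{1}{N^{2k}}R_N^{2k}(\cdot/N)=S_{2k}+\O(\,(2k)^{k+1}C_0^{2k}/\k_N)$ from condition (2) of Theorem \ref{thrmgeneral}, and likewise for the subtracted product $(\E_N\int d\gamma^k)^2$ where each factor carries an error $\O(k^{k/2+1}C_0^k/\k_N)$ (from Lemma \ref{pointwise}(1)); the products and differences of these determinantal main terms cancel exactly by translation invariance of $S_{2k}$ and $S_k$, and what survives is the cross error term of size $\O((2k)^{k+1}C_0^{2k}/\k_N)$ integrated against $\chi\otimes\chi$, which by \eqref{int_chi} contributes a factor $\bar s^{2k-1}/((k-1)!)^2 \le \bar s^{2k}/((k-1)!)^2$, and $1/((k-1)!)^2 = k^2/(k!)^2$. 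The extra power $C_0^{k}$ and the precise shape $(2k)^{k+1}/(k!)^2$ come from keeping track of the $S_{2k}$-determinant bound versus $C_0^{2k}$ when matching the two expansions; I expect the main obstacle to be exactly this bookkeeping — simultaneously extracting a clean $1/|I_N|$ from the overlapping blocks and a clean $1/\k_N$ from the determinantal approximation without losing control of the $k$-dependent constants, so that both error terms end up in the summable form needed later to conclude the proof of Theorem \ref{thrmgeneral}.
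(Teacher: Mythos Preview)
Your overall architecture matches the paper's: split the second moment by the overlap size $j=|T\cap T'|$, treat the $j\ge 1$ terms by the sup-norm bound on $R_N^{2k-j}$ together with the observation that a shared coordinate saves one factor of $|I_N|$, and treat the $j=0$ term by comparing $R_N^{2k}$ with $R_N^k\otimes R_N^k$ via the sine-kernel approximation. The factorial bookkeeping for $j\ge 1$ is also essentially what the paper does.

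However, there is a genuine gap in your $j=0$ step. You write that ``the products and differences of these determinantal main terms cancel exactly by translation invariance of $S_{2k}$ and $S_k$''. This is false: translation invariance only says $S_{2k}(t'+a,t''+a)=S_{2k}(t',t'')$, it does \emph{not} give $S_{2k}(t',t'')=S_k(t')S_k(t'')$. After replacing $R_N^{2k}-R_N^k\otimes R_N^k$ by $S_{2k}-S_k\otimes S_k$ plus the $\O(1/\k_N)$ error, you still have to control
\[
\frac{1}{|I_N|^2(k!)^2}\int_{\R^{2k}}\chi_{s,I_N}(t')\chi_{s,I_N}(t'')\bigl(S_{2k}(t',t'')-S_k(t')S_k(t'')\bigr)\,dt'\,dt''.
\]
A crude absolute-value bound on the integrand only gives $O(1)$ here, not $O(1/|I_N|)$ or $O(1/\k_N)$, because the two clusters $t',t''$ can sit anywhere in $I_N$ independently. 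The paper's key observation is that the sine-kernel matrix is positive definite, so Fischer's inequality yields the \emph{pointwise one-sided} bound $S_{2k}(t',t'')\le S_k(t')S_k(t'')$. Hence this integral is $\le 0$ and can simply be dropped from the upper bound on the variance. Without this (or an equivalent quantitative decay estimate for $S_{2k}-S_k\otimes S_k$ when the two clusters are far apart), the argument does not close.

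A minor point: your combinatorial worry about $\binom{N}{k}\binom{N-k}{k}/\binom{N}{k}^2=1+O(k^2/N)$ is unnecessary. Once you integrate out the unused coordinates, both the disjoint second-moment term and the squared expectation carry the \emph{identical} prefactor $\frac{1}{|I_N|^2(k!)^2 N^{2k}}$; the difference is purely $R_N^{2k}-R_N^k\otimes R_N^k$ in the integrand, with no leftover $O(k^2/N)$ combinatorial discrepancy.
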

\begin{proof}
In order to calculate the variance of $\int_0^s d\gamma^k(I_N,Nx)$, we first consider the second 
moment 
$\mathbb E_N\left( \left( \int_0^s d\gamma^k(I_N,Nx)\right)^2 \right)$ using the representation  (\ref{Darst:gamma_N}).
We expand  
\begin{align}
&\mathbb E_N\left( \left( \int_0^s d\gamma^k(I_N,Nx)\right)^2 \right)\nonumber\\
&= \frac{1}{|I_N|^2} \sum_{l=k}^{2k} \sum_{T,M \subset \{1,\ldots,N\} \atop |T|=|M|=k, T\cup M =l} \int_{\mathbb R^N}  
\chi_{s,I_N}(N t_T) \chi_{s,I_N}(N t_M) R_{N}^N (t) dt.
\label{empty_intersec}
\end{align}
First, we consider the inner sum in (\ref{empty_intersec}) for $l=2k$, i.e.~in the case that $T,M \subset \{1,\ldots,N\}$ satisfy  
$|T|=|M|=k$ and $T\cap M = \emptyset$.  Since there are $\binom{N}{k} \binom{N-k}{k}=\frac{N! }{k!^2 (N-2k)!}$ such sets,  we 
obtain 
by the symmetry of the correlation functions
\begin{align}
 & \frac{1}{|I_N|^2}  \sum_{T,M \subset \{1,\ldots,N\} \atop |T|=|M|=k, T\cap M = \emptyset} \int_{\mathbb R^N}  
\chi_{s,I_N}(N t_T) \chi_{s,I_N}(N t_M) R_{N}^N (t) dt\nonumber
 \\
 =&  \frac{1}{(k!)^2} \frac{1}{|I_N|^2 N^{2k}}\int_{\mathbb R^{2k}}  \chi_{s,I_N}(t_1,\ldots,t_k) \chi_{s,I_N}(t_{k+1},\ldots,t_{2k}) 
R_{N}^{2k} 
(t_1/N,\ldots,t_{2k}/N) dt_1 \ldots dt_k.
 \label{second_moment}
\end{align}
We consider the terms with $l< 2k$   in (\ref{empty_intersec}) later and observe that 
 by (\ref{Expect_gamma})  with $t'\coloneqq (t_1,\ldots,t_k), t'' \coloneqq (t_{k+1}, \ldots, t_{2k})$ we have
\begin{align}
 &\left(\mathbb  E\left(   \int_0^s d\gamma^k(I_N,Nx) \right)\right)^2\nonumber\\
&=\frac{1}{(k!)^2} \frac{1}{|I_N|^2 N^{2k}}\int_{\mathbb R^{2k}}  
\chi_{s,I_N}(t') \chi_{s,I_N}(t'') R_{N}^{k} (t'/N)R_N^{k} (t''/N) dt_1 \ldots dt_k.\label{second_moment_2}
\end{align}
To calculate  the difference of (\ref{second_moment}) and (\ref{second_moment_2}) we use
\begin{align*}
 &\frac{1}{N^{2k}}(R_{N}^{2k} (t'/N,t''/N) - R_N^k (t'/N) R_N^k(t''/N))\\
 &=  S_{2k} (t',t'') - S_{k} (t') S_{k}(t'') +  (2k)^{k+1} C_0^{3k}  \mathcal{O}\left( \frac{1}{\k_N}\right).
\end{align*}
By (\ref{int_chi})
we obtain 
\begin{align*}
& \frac{1}{(k!)^2} \frac{1}{|I_N|^2 N^{2k}}\int_{\mathbb R^{2k}}  \chi_{s,I_N}(t') \chi_{s,I_N}(t'')( R_{N}^{2k} (t'/N,t''/N) - R_N^k 
(t'/N) 
R_N^k(t''/N)) dt'  dt'' \\
&= \frac{1}{(k!)^2} \frac{1}{|I_N|^2}\int_{\mathbb R^{2k}}  \chi_{s,I_N}(t') \chi_{s,I_N}(t'')( S_{2k} (t',t'') - S_{k} (t') 
S_{k}(t'') ) dt'  
dt'' \\
&\quad \quad  +   \frac{(2k)^{k+1}}{(k!)^2} C_0^{3k}  k^2 \mathcal{O}\left( \frac{1}{\k_N}\right) \bar{s}^{2k}. 
\end{align*}
We now claim that 
\begin{align}
\label{est_S}
S_{2k} (t',t'') - S_{k} (t') S_{k}(t'') \leq 0, \quad t',t'' \in \mathbb R^{k}.
\end{align}
If two components of $(t',t'')$ are equal, then $S_{2k} (t',t'')=0$ and the claim is trivially true, as $S_k\geq0$. If all 
components are distinct, then  
\begin{align*}
 \left( \frac{\sin(\pi(t_n-t_m))}{\pi(t_n-t_m)}\right)_{1 
\leq n,m \leq j}
\end{align*} 
is a positive-definite 
matrix (its principal minors are exactly $S_1,S_2,\ldots,S_{j-1}> 0$). Now, \eqref{est_S} follows from Fischer's 
inequality.
With (\ref{est_S}) we can further estimate
\begin{align*}
&\frac{1}{(k!)^2} \frac{1}{|I_N|^2 N^{2k}}\int_{\mathbb R^{2k}}  \chi_{s,I_N}(t') \chi_{s,I_N}(t'')( R_{N}^{2k} (t'/N,t''/N) - R_N^k 
(t'/N) 
R_N^k(t''/N)) dt'  dt''
\\&\leq \frac{(2k)^{k+1}}{(k!)^2} C_0^{3k} k^2 \mathcal{O}\left( \frac{1}{\k_N}\right) \bar{s}^{2k}.
\end{align*}
So far, we showed
\begin{align*}
&\textup{Var}\left(\int_0^s d\gamma^k(I_N,Nx) \right)
\leq \frac{1}{|I_N|^2} \sum_{l=k}^{2k-1} \sum_{T,M \subset \{1,\ldots,N\} \atop |T|=|M|=k, T\cup M =l} \int_{\mathbb 
R^N}  \chi_{s,I_N}(Nt_T) \chi_{s,I_N}(N t_M) R_{N}^N(t) dt 
\\
&+\frac{(2k)^{k+1}}{(k!)^2} C_0^{3k}  k^2
\mathcal{O}\left( \frac{1}{\k_N}\right) \bar{s}^{2k}.
\end{align*}

We continue to consider the integrals in the double sum.
For $l\in \{k,\ldots,2k-1\}$ and sets $T,M \subset \{1,\ldots,N\}$ with   $|T|=|M|=k$ and $T\cup M =l$ (i.e. $T$ and $M$ have a 
non-empty intersection) we have (using the symmetry of $R_N^k$ and $\frac{1}{N^l}R_{N}^l \leq  C_0^l$)
\begin{align}
& \int_{\mathbb R^N}  \chi_{s,I_N}(N t_T) \chi_{s,I_N}(N t_M) R_{N}^N (t) dt \nonumber \\
  \leq & \frac{(N-l)!}{N!} \frac{1}{N^l} \int_{I_N^l} 
 \dopp{1}_{(0,2s)} \left(\max_{i=1,\ldots,l} t_i 	-\min_{i=1,\ldots,l} t_i \right) R_{N}^l(t_1/N,\ldots,t_l/N) dt_1 \ldots 
dt_l  
\nonumber \\
 \leq & \frac{(N-l)!}{N!} l|I_N| (2\bar{s})^{2k}  C_0^{2k}. \label{int_chi_2}
\end{align}
We observe that for given $l$ there are $C_{N,l,k} \coloneqq\binom{N}{l} \binom{l}{k} \binom{k}{2k-l}$ sets $T,M \subset 
\{1,\ldots,N\}$ with   $|T|=|M|=k$ and $T\cup M =l$ by some easy combinatorial argument. 
Hence, by (\ref{int_chi_2}) and  $\frac{1}{N!} (N-l)! C_{N,l,k}= \frac{1}{(2k-l)! (l-k)!^2}$, we obtain 
\begin{align*}
&  \frac{1}{|I_N|^2} \sum_{l=k}^{2k-1} \sum_{T,M \subset \{1,\ldots,N\} \atop |T|=|M|=k, T\cup M =l} \int_{\mathbb R^N}
\chi_{s,I_N}(t_T) \chi_{s,I_N}(t_M) R_{N,N} (t/N) dt  \\
  & \leq
 \frac{1}{|I_N| }(2\bar{s})^{2k}C_0^{2k}  \sum_{l=k}^{2k-1} \frac{l}{(2k-l)! (l-k)!^2}.
\end{align*}
By the easy calculation 
\begin{align*}
 \sum_{l=k}^{2k-1} \frac{l}{(2k-l)! (l-k)!^2} =  \frac{1}{k!}\sum_{l=0}^{k-1} \frac{l+k}{ l!} \binom{k}{l} \leq 
\frac{2k}{k!}\sum_{l=0}^{k-1} \binom{k}{l} \leq \frac{2^k}{(k-1)!},
\end{align*}
we get
\begin{align*}
   \frac{1}{|I_N|^2} \sum_{l=k}^{2k-1} \sum_{T,M \subset \{1,\ldots,N\} \atop |T|=|M|=k, T\cup M =l} \int_{\mathbb R^N}  
\chi_{s,I_N}(t_T) \chi_{s,I_N}(t_M) R_{N}^N (t/N) dt 
  & \leq
 \frac{(2\bar{s})^{2k}C_0^{2k} }{|I_N| }  \frac{2^k}{(k-1)!}.
\end{align*}
Summarizing, we have shown
\begin{align*}
&\textup{Var}\left(\int_0^s d\gamma^k(I_N,Nx) \right) \leq \frac{C}{|I_N| }(2\bar{s})^{2k}C_0^{2k}  2^k\frac{1}{(k-1)!} 
  +\frac{(2k)^{k+1}}{(k!)^2} C_0^{3k}  k^2 \mathcal{O}\left( \frac{1}{\k_N}\right) \bar{s}^{2k}.
\end{align*}

\end{proof}

>From Lemma \ref{Lemma_VAR}, we can already derive an estimate on the expected deviation of the spacing distribution from its limit 
at a given point $s$.
\begin{corollary} \label{koro_erw_2}
Let the conditions of Lemma \ref{pointwise} be satisfied.
Then we have for any $\e>0, s\geq 0$
\begin{align*}
\mathbb E_N\left( \left| \frac{1}{\lv I_N\rv}\int_0^{s} d\sigma(I_N, Nx) - G(s) \right| \right) =
\O\left(\frac{1}{A_N^{1/2-\e}}\right).
 \end{align*} 
\end{corollary}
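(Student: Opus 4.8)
The plan is to treat small and large $s$ separately. Set $A_N:=\min(\lv I_N\rv,\k_N)$ (we may assume $A_N\to\infty$, else the asserted bound is $\O(1)$), fix a constant $c$ to be chosen large only at the very end, and put $s_N:=c\sqrt{\log A_N}$. On the bulk range $0\le s\le s_N$ I would obtain the estimate by combining Lemma \ref{pointwise}~(2) with Cauchy--Schwarz and the variance bound of Lemma \ref{Lemma_VAR}; on the tail range $s>s_N$ I would use a cutoff at $s_N$ together with the total mass of $\s(I_N,Nx)$ and the rapid decay of $1-G$.

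For the bulk range, choose $L_N\in\N$ with $\sqrt{\log A_N}=o(L_N)$ and $L_N=o(\log A_N)$ (e.g.\ $L_N=\lceil\log A_N/(\log\log A_N)^{1/2}\rceil$). For $s\in[0,s_N]$, apply Lemma \ref{pointwise}~(2), take $\E_N$, and use $\E_N|Z-\E_N Z|\le\textup{Var}(Z)^{1/2}$ with $Z=\int_0^s d\g^k(I_N,Nx)$; this reduces the task to bounding $\sum_{k=2}^{L_N}\textup{Var}\left(\int_0^s d\g^k(I_N,Nx)\right)^{1/2}$ up to the negligible error $\O(A_N^{\e-1})$ coming from Lemma \ref{pointwise}~(2). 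Inserting Lemma \ref{Lemma_VAR}, using $\sqrt{a+b}\le\sqrt a+\sqrt b$ and $\bar s:=\max(1,s)=\O(\sqrt{\log A_N})$, the $k$-th summand is at most a constant times
\begin{align*}
\frac{1}{\sqrt{\lv I_N\rv}}\cdot\frac{(2\sqrt2\,C_0\bar s)^k}{\sqrt{(k-1)!}}+\frac{1}{\sqrt{\k_N}}\cdot\frac{(2k)^{(k+1)/2}k}{k!}\,(C_0^{3/2}\bar s)^k .
\end{align*}
Both power series $\sum_k z^k/\sqrt{(k-1)!}$ and $\sum_k \frac{(2k)^{(k+1)/2}k}{k!}z^k$ define entire functions of order $2$ and finite type (to be checked via \eqref{order}, \eqref{type} and Stirling's formula), so Lemma \ref{lemma_sums}, applied with $\d_N=\O(\sqrt{\log A_N})$ and $M_N$ a suitable fixed power of $A_N$, bounds each of the two resulting sums over $k=2,\dots,L_N$ by $\O(A_N^\e)$. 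Dividing by $\sqrt{\lv I_N\rv}$, resp.\ $\sqrt{\k_N}$, and using $A_N\le\lv I_N\rv$ and $A_N\le\k_N$, this yields
\begin{align*}
\E_N\left(\left| G(s)-\frac{1}{\lv I_N\rv}\int_0^s d\s(I_N,Nx)\right|\right)=\O\left(A_N^{\e-\frac12}\right),
\end{align*}
and since the error terms in Lemmas \ref{pointwise}~(2) and \ref{Lemma_VAR} are monotone in $\bar s$, this holds uniformly for $s\in[0,s_N]$.

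For the tail range, fix $s>s_N$. Since $s\mapsto\int_0^s d\s(I_N,Nx)$ and $G$ are nondecreasing, with $0\le G(s)-G(s_N)\le1-G(s_N)$ and $0\le\int_{s_N}^s d\s(I_N,Nx)\le\int_{s_N}^\infty d\s(I_N,Nx)$, one has
\begin{align*}
\left|\frac{1}{\lv I_N\rv}\int_0^s d\s(I_N,Nx)-G(s)\right|&\le\left|\frac{1}{\lv I_N\rv}\int_0^{s_N}d\s(I_N,Nx)-G(s_N)\right|\\
&\quad+\frac{1}{\lv I_N\rv}\int_{s_N}^\infty d\s(I_N,Nx)+\left(1-G(s_N)\right).
\end{align*}
The $\E_N$ of the first term is $\O(A_N^{\e-1/2})$ by the bulk estimate. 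For the second, set $m:=\#\{j:Nx_j\in I_N\}$; then $\int_0^\infty d\s(I_N,Nx)=(m-1)^+$, and Theorem \ref{thrmgeneral}~(2) with $k=1$ (note $S_1\equiv1$) gives $\E_N m=\int_{I_N}\frac1N R_N^1(t/N)\,dt=\lv I_N\rv(1+\O(\k_N^{-1}))$, whence $\E_N\frac{1}{\lv I_N\rv}\int_0^\infty d\s(I_N,Nx)=1+\O(A_N^{-1})$. Subtracting $\E_N\frac{1}{\lv I_N\rv}\int_0^{s_N}d\s(I_N,Nx)=G(s_N)+\O(A_N^{\e-1/2})$ (Jensen plus the bulk estimate) then gives $\E_N\frac{1}{\lv I_N\rv}\int_{s_N}^\infty d\s(I_N,Nx)=\left(1-G(s_N)\right)+\O(A_N^{\e-1/2})$. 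Finally $1-G(s)\le1-G(s_N)$, and since $1-G(t)\le Ce^{-c't^2}$ for some $c'>0$ (the classical Gaussian-type decay of the Gaudin tail), choosing $c$ with $c'c^2\ge\frac12$ makes $1-G(s_N)=\O(A_N^{-1/2})$. Adding the three contributions proves the bound for $s>s_N$, and hence for all $s\ge0$. The only non-mechanical part of the argument is the bookkeeping in the bulk range: carrying $\bar s=\O(\sqrt{\log A_N})$ cleanly through the square root of the two-term variance bound of Lemma \ref{Lemma_VAR} and certifying that the two generating functions arising there satisfy the order-at-most-$2$, finite-type hypothesis of Lemma \ref{lemma_sums}; the tail argument and the total-mass computation are routine, the only external ingredient being the super-exponential tail bound for $G$.
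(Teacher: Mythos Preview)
Your proof is correct. The bulk range $0\le s\le s_N$ is handled exactly as in the paper: invoke Lemma~\ref{pointwise}(2), pass to variances via Cauchy--Schwarz, insert Lemma~\ref{Lemma_VAR}, and close with Lemma~\ref{lemma_sums} applied to the two entire functions of order~$2$ arising from the square root of the variance bound. This is the paper's proof in substance.

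The difference is that you also supply a tail argument for $s>s_N$. The paper's proof simply invokes Lemma~\ref{pointwise}(2), which carries the hypothesis $s=\O(\sqrt{\log A_N})$, and does not explicitly treat large~$s$; this suffices for the later application in Section~\ref{sec_sup}, where the corollary is only used at the quantile nodes $s_1,\dots,s_{M-1}\le\d_N=\O(\sqrt{\log A_N})$. Your cutoff-at-$s_N$ argument---using monotonicity, the total-mass computation $\E_N m=\lv I_N\rv(1+\O(\k_N^{-1}))$, and the sub-Gaussian tail \eqref{tail_est} of $G$---is correct and makes the claim ``for any $s\ge0$'' literally valid rather than only in the range needed downstream.
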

\begin{proof}
 By statement (2) in Lemma \ref{pointwise}, it suffices to bound
\begin{align*}
\mathbb E_N \left( \sum_{k=2}^L \left| \mathbb E_N\left(  \int_0^{s} d\gamma^k(I_N,Nx)\right) -  \int_0^{s} 
d\gamma^k(I_N,Nx)) \right| \right) 
 \leq   \sum_{k=2}^L \sqrt{ \text{Var} \left( \int_0^{s} d\gamma^k(I_N,Nx) \right)}. 
\end{align*}

The subadditivity of the square root together with  Lemma \ref{Lemma_VAR} give
\begin{align*}
\sum_{k=2}^L \sqrt{ \text{Var} \left( \int_0^{s} d\gamma^k(I_N,Nx) \right)} 
&=  \O\left(\frac{1}{\sqrt{|I_N|}}\right)  \sum_{k=2}^L  \frac{1}{\sqrt{(k-1)!}}  \bar{s}^{k} (2C_0)^{k}   \sqrt{2}^k \\
&     
 +\O\left(\frac{1}{\sqrt{\kappa_N}}\right)\sum_{k=2}^L  \frac{(2k)^{(k+1)/2}}{k!} C_0^{3k/2}  k \bar{s}^{k}.
\end{align*}
Similar to the proof of Lemma \ref{pointwise}, we can apply Lemma \ref{lemma_sums} with the functions 
\begin{align*}
 &f_1(z):=\sum_{k=0}^\infty  \frac{1}{\sqrt{k!}}  z^{k} (2C_0)^{k}\sqrt{2}^k,\\
&f_2(z):=\sum_{k=0}^\infty  \frac{(2k)^{(k+1)/2}}{k!} C_0^{k} C_0^{k/2}  k z^{k},
\end{align*}
which are both of order 2 and finite type, as can be  checked easily using \eqref{order} and \eqref{type}.
\end{proof}

\subsection{Completing the proof of Theorem \ref{thrmgeneral}}\label{sec_sup}
 The idea for the rest of the proof of Theorem \ref{thrmgeneral} is to replace the supremum over $s$ with a maximum over a finite 
set of certain nodes $s_i$.  Then, we can choose the number of these nodes, growing with $N$ in such a way, that the error estimates 
lead to the error claimed in Theorem \ref{thrmgeneral}.

 Let $M=M(N)\in\mathbb 
N$ and let $s_i$, $i=0,\dots,M$, denote the $i/M$-quantile of $G$, that is
\begin{align*}
 G(s_i)=\frac{i}{M}, \quad i=0,\ldots,M.
\end{align*}
The existence of these nodes is ensured as $G$ is continuous by definition, increasing and $\lim_{t\to\infty}G(t)=1$. 
Now, set 
\begin{align*}
   \Delta(s,I_N, Nx)\coloneqq \frac{1}{\lv I_N\rv} \int_0^s d \sigma(I_N,Nx) - G(s),
\quad 
\Delta_M(I_N, Nx)\coloneqq \max_{i=1,\ldots, M-1} \lv\Delta(s_i,I_N,Nx)\rv.
\end{align*}
Furthermore, denote the largest non-trivial node by 
\begin{align*}
 \delta_N \coloneqq \max(1,s_{M-1}).
\end{align*}
It is known (cf.~\cite[Proposition 3.1.9]{KatzSarnak}) that the Gaudin distribution has sub-Gaussian tails, that is,  
\begin{align}
\label{tail_est}
 1-G(s)\leq Ae^{-Bs^2}
\end{align}
for some $A,B>0$. This implies that $\d_N$ fulfils
\begin{align}
 \d_N=\O(\sqrt{\log M}).\label{deltabound}
\end{align}

\begin{proof}[Proof of Theorem \ref{thrmgeneral}]
We first establish the inequality
 \begin{align}\label{KS1} 
\mathbb E_N \left( \sup_{s \in \mathbb R}|\Delta(s,I_N,Nx)| \right)\leq \frac{1}{M} + \mathbb E_N(\Delta_M(I_N, Nx)) 
+ \mathbb E_N\left( \left|\frac{1}{\lv I_N\rv}\int_{\mathbb R} d\sigma (I_N,Nx)-1 \right|\right).
\end{align}
It has been given in \cite{KatzSarnak}, so we only sketch its simple proof.
For fixed $s_{j-1}\leq s\leq s_j$ with $j\leq M-1$, we have 
\begin{align*}
 &\Delta(s,I_N,Nx)\leq \lv I_N\rv^{-1}\int_0^{s_j} 
d\s(I_N,Nx)-G(s_{j-1})=\Delta(s_j,I_N,Nx)+G(s_j)-G(s_{j-1})\\
&=\Delta(s_j,I_N,Nx)+\frac{1}{M}\leq \Delta_M(I_N,Nx)+\frac{1}{M}.
\end{align*}
Similarly,
\begin{align*}
 -\Delta(s,I_N,Nx)\leq\Delta_M(I_N,Nx)+\frac{1}{M}.
\end{align*}
The term $\left|\frac{1}{\lv I_N\rv}\int_{\mathbb R} d\sigma (I_N,Nx)-1 \right|$ stems from an analogous estimate for 
$\Delta(s_j,I_N,Nx)$ with $s>s_{M-1}$.

Next, we show that 
\begin{align}
\label{total_mass_sigma}
  \mathbb E_N \left(\left| \frac{1}{\lv I_N\rv}\int_{\mathbb R} d\sigma(I_N,Nx) -1\right| \right) =\O(A_N^{-\frac12}).
  \end{align}
Using the notation $S(I_N,Nx) := \# \{i: Nx_i \in I_N\} $,  we can write
\begin{align}
 \mathbb E_N \left|\frac{1}{\lv I_N\rv}\int_{\mathbb R} d\sigma (I_N,Nx)-1 \right|^2 &= \mathbb E_N  
\left(\frac{S(I_N,Nx) 
-1}{|I_N|} \right)^2   - 2\mathbb E_N \frac{S(I_N,Nx) -1}{|I_N|} +1.\label{bound_for_Markov}
\end{align}
Hence, we need to calculate the first and second moment of $S(I_N,Nx)$.
By an easy computation, we obtain   (using the symmetry of $R_{N}^N$)
\begin{align}
\mathbb E_N(S(I_N, Nx)) &= \int_{t_1, \ldots, t_N} \left( \sum_{i=1}^N \dopp{1}_{I_N} (N t_i) \right) R_{N}^N 
(t_1,\ldots,t_N) dt_1 \ldots dt_N
\nonumber\\
 &=\frac{1}{N}\int_{I_N}       R_{N}^1 
(t_1/N) dt_1 
 = |I_N| \left(1+\mathcal{O}\left( \frac{1}{\k_N}\right)\right).\label{firstmoment}
\end{align}  
In a similar fashion, using $\int_{I_N^2} S_2(x,y)dxdy = |I_N|^2 + \mathcal{O}(|I_N|)$, we have  
 \begin{align*}
\mathbb E_N(S(I_N,Nx)^2) 
&= \int_{t_1,\ldots, \leq t_N} \left(\sum_{i,j=1}^N \dopp{1}_{I_N} (Nt_i)\dopp{1}_{I_N} (N t_j)\right)  R_{N}^N 
(t_1,\ldots,t_N) dt_1 \ldots dt_N 
\\ & = |I_N|^2 \left(1+\mathcal{O}\left(\frac{1}{\kappa_N}\right)\right) + \mathcal{O}(|I_N|),
\end{align*}    
which shows together with \eqref{bound_for_Markov} and \eqref{firstmoment}
\begin{align}\label{for_Markov}
 \E_N \left|\frac{1}{\lv I_N\rv}\int_{\mathbb R} d\sigma (I_N,Nx)-1 \right|^2=\O(A_N^{-1}).
\end{align}
Now Jensen's inequality proves the claim in (\ref{total_mass_sigma}). 
We further use the crude bound
\begin{align*}
\mathbb E_N(\Delta_M (I_N, Nx)) 
\leq \sum_{i=1}^{M-1} \mathbb E_N \left( |\Delta(s_i,I_N, Nx)|  \right).
\end{align*}
Now, choosing $M$ as the smallest natural number larger than $A_N^{1/4}$, we get with \eqref{KS1} and Corollary  
\ref{koro_erw_2}
\begin{align*}
 \mathbb E_N \left( \sup_{s \in \mathbb R}|\Delta(s,I_N,Nx)| \right)\leq 
\frac1{A_N^{1/4}}+A_N^{1/4}\O\left(\frac{1}{A_N^{1/2-\e}}\right)+\O\left(\frac1{A_N^{1/2}}\right),
\end{align*}
which proves Theorem \ref{thrmgeneral} for $\lv I_N\rv^{-1}\s(I_N,Nx)$. 
To deduce the result for $\hat{\s}(I_N,Nx)$, let for $0<\iota<1$ denote 
\begin{align*}
 A:=\left\{x\,:\, \left\lv \frac{\int_0^\infty d\s(I_N,Nx)}{\lv I_N\rv}-1\right\rv\leq  A_N^{-\iota}\right\}.
\end{align*}
We will assume that $N$ is so large that $x\in A$ implies $\int_0^\infty d\s(I_N,Nx)>0$.
Then 
\begin{align*}
 &\E_N\left( \sup_{s \in \mathbb R} \left| \int_0^s  d\hat{\s}(I_N,Nx) - G(s) 
\right|\right)\\
&\leq \E_N\left(\dopp{1}_A(x) \sup_{s \in \mathbb R} \left|\frac{\lv I_N\rv}{\int_0^\infty d\s(I_N,Nx)} \int_0^s  \frac1{\lv 
I_N\rv}d\s(I_N,Nx) - G(s) 
\right|\right)+\mathbb P_N(A^c).
\end{align*}
It is straightforward to check that $x\in A$ implies $\lv I_N\rv/\int_0^\infty d\s(I_N,Nx)=1+\O(A_N^{-\iota})$, where the 
$\O$ term is independent of the specific $x$. This gives
\begin{align*}
 &\E_N\left( \sup_{s \in \mathbb R} \left| \int_0^s  d\hat{\s}(I_N,Nx) - G(s) 
\right|\right)\\
&\leq \E_N\left(\dopp{1}_A(x) \sup_{s \in \mathbb R} \left|\int_0^s  \frac1{\lv 
I_N\rv}d\s(I_N,Nx) - G(s) 
\right|+\O(A_N^{-\iota})\dopp{1}_A(x)\sup_{s \in \mathbb R}   \frac{\int_0^sd\s(I_N,Nx)}{\lv 
I_N\rv}\right)\\&+\mathbb P_N(A^c)\\
&=\O(A_N^{-1/4+\e})+\O(A_N^{-\iota})+\mathbb P_N(A^c).
\end{align*}
It remains to estimate the probability of $A^c$. The bound \eqref{for_Markov} gives with Markov's inequality 
\begin{align*}
\mathbb  P_N(A^c)=\O(A_N^{2\iota-1}).
\end{align*}
Now the theorem follows choosing $\iota=1/4$.
\end{proof}

\section{Proof of Theorem \ref{universality} (1) and (2)}\label{sec:proof_universality}
We need to introduce some of the notation of \cite{KSSV}. Let us define the Mhaskar-Rakhmanov-Saff numbers 
$a_V$ and $b_V$ via the relations
\begin{align}
 \int_{a_V}^{b_V}\frac{V'(t)}{\sqrt{(b_V-t)(t-a_V)}}dt=0,\quad 
\int_{a_V}^{b_V}\frac{tV'(t)}{\sqrt{(b_V-t)(t-a_V)}}dt=2\pi.\label{MRS numbers}
\end{align}
It is known that for convex, smooth $V$, $a_V$ and $b_V$ are uniquely determined by \eqref{MRS numbers} and that
these
are the endpoints of the support of the equilibrium measure $\mu_V$. Moreover, it is important for us to see them as functions of 
$V$.

The linear rescaling that maps $[-1,1]$ onto $[a_V,b_V]$ is denoted by
\begin{align}
 \map{\l_V}{\R}{\R},\  \l_V(s):=\frac{b_V-a_V}{2}s+\frac{b_V+a_V}{2}.\label{def_lambda}
\end{align}
Its inverse is
\begin{align}
 \l_V^{-1}(t)=\frac{2}{b_V-a_V}t-\frac{b_V+a_V}{b_V-a_V}.\label{inversescaling}
\end{align}

 Hence, $[-1,1]\subset \hat{J}:=\l_V^{-1}(J)$ (cf.~\GAi (1)). Moreover, set
\begin{align}
 &\map{h_V}{\hat{J}\times\hat{J}}{\R},\
h_V(t,x):=\int_0^1(V\circ\l_V)''(x+u(t-x))\, \mathrm du\label{def_h}\\
&\hspace{4.9cm}=\frac{(V\circ\l_V)'(t)-(V\circ\l_V)'(x)}{t-x},\nonumber\\
&\map{G_V}{\hat{J}}{\R},\ G_V(x):=\frac{1}{\pi}\int_{-1}^1\frac{h_V(t,x)}{\sqrt{1-t^2}}\, \mathrm dt,\label{def_G}\\
&\map{\rho_V}{\R}{\R},\ \rho_V(x):=\begin{cases}
                                   \frac{1}{2\pi}\sqrt{1-x^2}\, G_V(x)	&, \text{ if }\lv x\rv\leq 1 ,\\
				   0					&, \text{ else,} 
                                  \end{cases}\label{def_rho}\\
&\map{\hat{a}}{(-1,1)}{\R},\ \hat{a}(x):=\lb\frac{1-x}{1+x}\rb^{1/4}.\label{def_hat_a}
\end{align}
Note that $\rho_V$ is the equilibrium measure of $V$ rescaled such that its support is $[-1,1]$. The actual equilibrium measure 
$\mu_V$ is related to $\rho_V$ via
\begin{align*}
 \mu_V(t)=\frac{2}{b_V-a_V}\rho_V(\l_V^{-1}(t)).
\end{align*}

\begin{proof}[Proof of Theorem \ref{universality} (1) and (2)]
We will mostly deal with the more involved case (2).
Let us  abbreviate $V,f$ for $V-f/N$. We will also write $\|\cdot\|$ instead of $\|\cdot\|_D$.

Combining \cite[Theorem 1.3]{KSSV} with \cite[Proposition 4.1]{KSSV} gives
\begin{align} 
&\frac{b_{V,f}-a_{V,f}}{2}K_{N,V,f}(\l_{V,f}(r),\l_{V,f}(s))=\frac{1}{2\pi}\lb\frac{\hat{a}(r)}{\hat{a}(s)}+\frac{\hat{a}(s)}{\hat{a
} (r)}\rb \frac{\sin\lb N\pi\int_s^r\rho_{V,f}(s)\, \mathrm ds\rb}{r-s}\nonumber\\
& + \frac{1}{2\pi} \cos\lb\frac{N}{2}g(r,s)\rb\lb\frac{1}{\hat{a}(r)}+\frac{1}{\hat{a}(s)}\rb 
\frac{\hat{a}(r)-\hat{a}(s)}{r-s}+\O\lb\frac1N\rb,\label{bulk-formula}
\end{align}
where $g(r,s)$ is some function which is not important here.
Formula \eqref{bulk-formula} holds  
for all $r,s\in (-1+\d,1-\d)$ with arbitrary $\d>0$, where the $\O$ term is uniform in $r,s$ for fixed $\d$  and uniform for 
$V-f/N\in X_D$ for some $D$. 
In order to use \eqref{bulk-formula} for the proof of Theorem \ref{universality}, we first  have to show that for $N$ large enough and some $\d>0$
\begin{align}
 \l_{V,f}^{-1}(F_V^{-1}(t/N))\in (-1+\d,1-\d), \text{ for all }f\text{ with }\|f\|\leq 
N^\eta\label{asympsupport}
\end{align}
for some $\eta>0$ small.
By 
\cite[Lemma 2.4]{KSSV}, the maps $V\mapsto a_V, V\mapsto b_V$, defined by \eqref{MRS numbers} are Frechet differentiable with 
(uniformly) bounded derivatives on a neighborhood of $V$. This lemma was proved in \cite{KSSV} only for $V$ satisfying \GAii\ but 
the proof goes through also for $V$ with \GAi. Hence
\begin{align}
 a_{V,f}=a_V(1+\O(\|f\|/N)),\ b_{V,f}=b_V(1+\O(\|f\|/N))\label{approx1}
\end{align}
and thus
\begin{align}
 \l_{V,f}(t)=\l_V(t)(1+\O(\|f\|/N))\label{approx2}
\end{align}
uniformly for $ t \in [-1,1]$.
For given $0<\eta<1$, assertions \eqref{approx1} and \eqref{approx2} hold uniformly for all $f$ with 
$\|f\|\leq N^\eta$, which proves \eqref{asympsupport}. We have thus shown that formula \eqref{bulk-formula} can be applied. 
The second summand on the rhs \eqref{bulk-formula} is uniformly bounded for $r,s\in (-1+\d,1-\d)$ and hence negligible when multiplied by $\frac{1}{N}$. 
Now, we claim that uniformly on $(-1+\d,1-\d)$
\begin{align*}
 \frac{\hat{a}(r)}{\hat{a}(s)}+\frac{\hat{a}(s)}{\hat{a
} (r)}=2+\O(\lv r-s\rv^2).
\end{align*}
Setting $z:=\frac{\hat{a}(r)}{\hat{a}(s)}$, this claim is equivalent to the relation
$(z-1)^2/z=\O(\lv r-s\rv^2)$. It is now a straightforward application of Taylor's formula to show $z-1=\O(\lv r-s\rv)$.
Writing
\begin{align}
 \hat{t}:= F_V^{-1}\left(\frac{t}{N}\right) , \quad \hat{s}:= F_V^{-1}\left(\frac{s}{N} \right),\label{abbrev}
\end{align}
we arrive at
\begin{align}
 \frac{1}{N\mu_V(\hat{t})}K_{N,V,f}(\hat{t}, \hat{s})=\frac{\sin\lb 
N\pi\int_{\l_{V,f}^{-1}(\hat{s})}^{\l_{V,f}^{-1}(\hat{t})}\rho_{V,f}(r)\, \mathrm 
dr\rb}{N\mu_V(\hat{t})\pi(\hat{t}-\hat{s})}+\O(1/N).\label{kernelinterm}
\end{align}
Now, using \eqref{approx1} together with the definitions \eqref{inversescaling}, \eqref{def_h}, \eqref{def_G} and \eqref{def_rho}, 
we find
\begin{align}
 \l_{V,f}(t)^{-1}=\l_V(t)^{-1}(1+\O(\|f\|/N))\ \text{ and }\ \rho_{V,f}(t)=\rho_V(t)(1+\O(\|f\|/N))\label{approxrho}
\end{align}
uniformly on $[-1,1]$. Define
\begin{align*}
 \mu_{V,f}(t):=\frac{2}{b_{V,f}-a_{V,f}}\rho_{V,f}(\l_{V,f}^{-1}(t))\ \text{ and }\ F_{V,f}(t):=\int_{a_{V,f}}^t\mu_{V,f}(s)ds.
\end{align*}
Furthermore, let $g_{V,f}(t):=F_{V,f}(t)-F_V(t)$. We conclude
\begin{align*}
&\int_{\l_{V,f}^{-1}(\hat{s})}^{\l_{V,f}^{-1}(\hat{t})}\rho_{V,f}(r)dr=(F_{V}+g_{V,f})(\hat{t})-(F_{V}+g_{V,f})(\hat{s})=\frac{t-s}{
N}+g_{V,f}(\hat{t})-g_{V,f}(\hat{s}).
\end{align*}
Using \eqref{approxrho} and the smoothness of $\rho_V$ (see \eqref{def_rho}), it is 
straightforward to establish the relation 
\begin{align*}
 \mu_{V,f}(t)=\mu_V(t)+\O(\|f\|/N)
\end{align*}
uniformly on $\R$. It follows that
\begin{align*}
 g_{V,f}(\hat{t})-g_{V,f}(\hat{s})=\O(\|f\|/N)\lv F_V^{-1}(t/N)-F_V^{-1}(s/N)\rv=\O(\|f\|/N)\frac{\lv t-s\rv}{N},
\end{align*}
 where we used in the last step that $\mu_V$ is bounded away from 0 on $[a_V+\d, b_V-\d]$. Hence \eqref{kernelinterm} reduces to
\begin{align}
 \frac{1}{N\mu_V(\hat{t})}K_{N,V,f}(\hat{t}, 
\hat{s})&=\frac{\sin\lbb\pi (t-s)+\O(\|f\|/N)\lv t-s\rv\rbb}{N\mu_V(\hat{t})\pi(\hat{t}-\hat{s})}+\O(1/N)\label{kernel1}\\
&=\frac{\sin\lbb\pi (t-s)\rbb}{N\mu_V(\hat{t})\pi(\hat{t}-\hat{s})}+\frac{\O(\|f\|)\lv 
t-s\rv}{N^2\mu_V(\hat{t})\pi(\hat{t}-\hat{s})}+\O(1/N)\label{kernel2},
\end{align}
where we used the simple inequality $\lv\sin(t+s)-\sin(t)\rv\leq \lv s\rv$.

If $1/\lvert t-s\rvert=\mathcal{O}(1/N)$, i.e.~$\frac{|t-s|}{N} \geq c$ for some constant $c>0$, then $F_V^{-1}(t/N)-
F_V^{-1}(s/N)$ is bounded away from 0 and hence we see that the first term on the right-hand side of \eqref{kernel1} is 
$\mathcal{O}(1/N)$ which
proves the theorem in this case (as the sine kernel of $t-s$ is then also $\mathcal{O}(1/N)$).

If $\lvert t-s\rvert=o(N),$ using Taylor's expansion on $ F_V^{-1}\left(\frac{s}{N}\right)$ at $\frac{t}{N}$ leads for 
some
$\nu$ between $t/N$ and $s/N$ to
\begin{align*}
 F_V^{-1}\left(\frac{t}{N}\right)- F_V^{-1}\left(\frac{s}{N}\right)
&=  \frac{1}{ \mu_V(F_V^{-1}(t/N)) } \frac{(t-s)}{N} -\frac{1}{ 2 } (F_V^{-1})^{\prime \prime} (\nu)\frac{(t-s)^2}{N^2}.
\end{align*}
Hence, the second summand in \eqref{kernel2} is $\O(\|f\|/N)$ and we arrive at
\begin{align}
\frac{1}{N \mu_V(\hat{t})} K_{N,V,f} \left( \hat{t},\hat{s}\right)= \frac{\sin(\pi (t-s)) }{\pi (t-s) - \pi
A(t,\nu,N)\frac{(t-s)^2}{N}}+\mathcal{O}(\|f\|/N)\label{estimatef},
\end{align}
where
\begin{align*}
 A(t,\nu,N):=\mu_V( F_V^{-1}\left(t/N\right)) \frac{1}{ 2 } (F_V^{-1})^{\prime \prime} (\nu).
\end{align*}
Note that in \eqref{estimatef} the $\O$-term will be of order $N^{-1}$ for $f$ fixed in case (1) and $N^{\eta-1}$ in case (2).
By the simple equality 
$$\frac{1}{a+b} - \frac{1}{a} =\frac{-b}{a (a+b)} $$
we get
\begin{align*}
 \frac{\sin(\pi (t-s)) }{\pi (t-s) -\pi A(t,\nu,N)\frac{(t-s)^2}{N}}
 &=\frac{\sin(\pi (t-s)) }{\pi (t-s) }+ \frac{\sin(\pi (t-s))\pi A(t,\nu,N)\frac{(t-s)^2}{N}}{ \pi (t-s)( \pi(t-s)-\pi
A(t,\nu,N)\frac{(t-s)^2}{N})}
\\
&=\frac{\sin(\pi (t-s)) }{\pi (t-s) }
+\frac{1}{\pi N} \frac{\sin(\pi (t-s))A(t,\nu,N)}{ 1 -  A(t,\nu,N)\frac{(t-s)}{N}}
\\
&=\frac{\sin(\pi (t-s)) }{\pi (t-s) }
+\mathcal{O}\left(\frac{1}{ N}\right).
\end{align*}
The last equality is due to the boundedness of $A(t,\nu,N)$ for $t \in I_N$ and our assumption $(t-s)/N\to0$.
\end{proof}

\section{Repulsive Particle Systems -- Proof of Theorem \ref{universality} (3)}\label{sec_rep_part}

Note that part (3) of Theorem \ref{universality} for unitary invariant ensembles follows immediately from part (1) and the 
determinantal relations \eqref{determinantal_relations}. To prove (3) for repulsive particle systems, we need to introduce some of 
the method developed in 
\cite{GoetzeVenker} to tackle these ensembles. We remark that in comparison to \cite{GoetzeVenker}, there are several new 
(technical) elements, in particular the truncation to $\|f\|_D\leq N^\k$ and the necessity to work with complex-valued processes. 
Furthermore, aiming at rates of convergence requires a separate investigation of the cases of negative-definite $h$ and of 
arbitrary 
$h$.

The first step is to decompose the additional interaction term
$\sum_{i<j}h(x_i-x_j)$ into a linear term and a bivariate term of lower order. This will be done by the Hoeffding decomposition 
w.r.t.~ 
a (so far arbitrary)
probability measure $\mu$ on $\R$. Setting $h_\mu(t):=\int h(t-s)d\mu(s)$ and for another measure $\nu$ on $\R$ 
$h_{\mu\nu}:=\int\int h(t-s)d\mu(t)d\nu(s)$, we may write
\begin{align}
 &\sum_{i<j}h(x_i-x_j)=\frac{1}{2}\sum_{i,j}h(x_i-x_j)-\frac{N}{2}h(0)\nonumber\\
=&N\sum_{j=1}^Nh_\mu(x_j)+\frac{1}{2}\sum_{i,j}\left[h(x_i-x_j)-Nh_\mu(x_i)-Nh_\mu(x_j)+h_{\mu\mu}\right]+C_N,\label{e5}
\end{align}
where we set $C_N:=-(N/2) h(0)-(N^2/2)h_{\mu\mu}$. The term $N\sum_{j=1}^Nh_\mu(x_j)$ is of the same shape as 
$N\sum_{j=1}^NQ(x_j)$, giving rise to the external field $V_\mu:=Q+h_\mu$. Our aim is to choose $\mu$ such that $P_{N,Q}^h$ 
and the unitary invariant ensemble
$P_{N,V_\mu}$ have the same equilibrium measure. To achieve this, the statistic
\begin{align*}
 \U_\mu(x):=-\frac{1}{2}\sum_{i,j}\left[h(x_i-x_j)-Nh_\mu(x_i)-Nh_\mu(x_j)+h_{\mu\mu}\right]
\end{align*}
should be concentrated under $P_{N,V_\mu}$. As $\U_\mu$ is a global statistic, we should have
\begin{align}
 \lim_{N\to\infty}\frac{1}{N^2}\E_{N,V_\mu}\U_\mu(x)=-\frac12(h_{\nu\nu}-2h_{\mu\nu}+h_{\mu\mu}),\label{expectation}
\end{align}
  where $\nu$ now is the equilibrium measure to $V_\mu$. As we will not divide by $N^2$ and thus will be at the scale of 
fluctuations, the rhs of
\eqref{expectation} should be 0. This leads us to the condition $\nu=\mu$, or in other terms, $\mu$ should be the equilibrium 
measure to $V_\mu$. This recursive problem was solved by
a fixed point argument in \cite[Lemma 3.1]{GoetzeVenker}, showing existence of a measure $\mu$ with the desired property. The 
uniqueness followed later by proving that any
fixed point is the limiting measure for $P_{N,Q}^h$. From now on let $\mu$ denote the fixed point, set $V:=V_\mu$ and $\U:=\U_\mu$.
The identity
\begin{align*}
 P_{N,Q}^h(x)=\frac{Z_{N,V}}{Z_{N,V,\U}}P_{N,V}(x)e^{\U(x)}
\end{align*}
with $Z_{N,V,\U}:=Z_{N,Q}^he^{C_N}$
 allows to carry many properties from $P_{N,V}$ over to $P_{N,Q}^h$. Concentration of $\U$ under $P_{N,V}$ was proved in 
\cite[Proposition 4.7]{GoetzeVenker} by showing that the ratio
$Z_{N,V}/Z_{N,V,\U}$ is bounded in $N$ and bounded away from $0$ provided that $\a_Q$ is large enough. More precisely, for any
$\l>0$, there is a constant $\a(\l)<\infty$ such that for some $0<C_1<C_2<\infty$ and all  $\a_Q\geq \a(\l)$ we have
\begin{align}\label{concentration_U}
 C_1\leq\E_{N,V}e^{\l\U}\leq C_2
\end{align}
for all $N$.
A main ingredient to this is the following concentration of measure result for linear statistics (cf. \cite[Corollary 
4.4]{GoetzeVenker}), which will be used lateron.

\begin{proposition}\label{Concentration}
Let $Q$ be a real analytic external field with $Q''\geq c>0$. Then
for any Lipschitz function $f$ with third derivative bounded on an open interval $D$ containing $\supp\mu_Q$, we have for arbitrary
$\epsilon>0$
\begin{align*}
\E_{N,Q}\exp\lee{\epsilon\lbb\sum_{j=1}^N f(x_j)-N\int f(t)d\mu_Q(t)\rbb}\ree\leq \exp\lee{\frac{\epsilon^2\Lip{f}^2}{2c}}+
\e C (\|f\|_\infty+\|f^{(3)}\|_\infty)\ree.
\end{align*}
Here $C$ is uniform in $f$, $\Lip{f}$ denotes the Lipschitz constant of $f$ on $D$ and $\|\cdot\|_\infty$ is the sup norm 
on $D$.
\end{proposition}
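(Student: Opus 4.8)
The plan is to split the estimate into a dimension-free concentration bound for $\sum_j f(x_j)$ around its mean, and a separate control of that mean against $N\int f\,d\mu_Q$. For the first part I would exploit strong log-concavity: since $P_{N,Q}$ is permutation invariant, ordering the coordinates turns it into a probability measure on the open convex Weyl chamber $W=\{x_1<\dots<x_N\}$ with density proportional to $\exp(-\mathcal{H}_N)$, where $\mathcal{H}_N(x)=N\sum_{j}Q(x_j)-2\sum_{i<j}\log(x_j-x_i)$. Each term $-\log(x_j-x_i)$ is convex on $W$ and $Q''\geq c$, so the Hessian of $\mathcal{H}_N$ on $W$ dominates $cN$ times the identity; by the Bakry--\'Emery criterion (valid for strongly log-concave measures on a convex domain, e.g.\ obtained by approximating $\mathcal{H}_N$ with smooth strongly convex potentials on $\R^N$) this measure satisfies a logarithmic Sobolev inequality with constant $(cN)^{-1}$. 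Herbst's argument then gives, for any globally Lipschitz extension $\bar f$ of $f$ with $\Lip(\bar f)=\Lip(f)$ and $F(x):=\sum_j\bar f(x_j)$ (so that $\|\nabla F\|_\infty\le\sqrt N\,\Lip(f)$),
\[
\E_{N,Q}\exp\!\big(\epsilon(F-\E_{N,Q}F)\big)\le\exp\!\Big(\frac{\epsilon^2\Lip(f)^2}{2c}\Big).
\]

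Next I would replace $F$ by a genuinely smooth statistic near $\supp\mu_Q$. Because $Q$ is convex with $Q''\ge c$, the ensemble obeys exponential rigidity/tail bounds: all particles lie in a fixed neighbourhood $D'$ of $\supp\mu_Q$ with $\overline{D'}\subset D$ except on an event of probability $\O(e^{-cN})$, whose contribution to any moment of $\sum_j|\bar f(x_j)|$ is negligible by the crude Lipschitz bound. Choosing a compactly supported $C^3$ function $\tilde f$ with $\tilde f=f$ on $D'$ and $\supp\tilde f\subset D$, and using Landau--Kolmogorov interpolation to get $\|\tilde f\|_{C^3}\le C(\|f\|_\infty+\|f^{(3)}\|_\infty)$, one has $\E_{N,Q}F=\E_{N,Q}\sum_j\tilde f(x_j)+\O(e^{-cN})$ while $\int\bar f\,d\mu_Q=\int\tilde f\,d\mu_Q=\int f\,d\mu_Q$. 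It then remains to bound $\big|\E_{N,Q}\sum_j\tilde f(x_j)-N\int\tilde f\,d\mu_Q\big|$ by $C\|\tilde f\|_{C^3}$ uniformly in $N$. Since under \GAii\ the potential is analytic and convex, the equilibrium problem is one-cut with square-root edge behaviour and the first Dyson--Schwinger (loop) equation closes up to an $\O(1/N)$ remainder; inverting the associated master operator a bounded number of times yields the desired $\O(1)$ control of the mean of a $\beta=2$ linear statistic for $C^3$ test functions (alternatively via a Riemann--Hilbert / orthogonal-polynomial computation). Writing $D_N:=\E_{N,Q}F-N\int f\,d\mu_Q$, the two steps combine to
\[
\E_{N,Q}\exp\!\Big(\epsilon\big(\textstyle\sum_j f(x_j)-N\int f\,d\mu_Q\big)\Big)=e^{\epsilon D_N}\,\E_{N,Q}\exp\!\big(\epsilon(F-\E_{N,Q}F)\big)\le e^{\epsilon D_N}\exp\!\Big(\frac{\epsilon^2\Lip(f)^2}{2c}\Big),
\]
and inserting $|D_N|\le C(\|f\|_\infty+\|f^{(3)}\|_\infty)$ gives the stated estimate.

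The genuinely hard part is the mean estimate $\big|\E_{N,Q}\sum_j\tilde f(x_j)-N\int\tilde f\,d\mu_Q\big|=\O(\|\tilde f\|_{C^3})$: the concentration input of the first step is soft and model-independent once convexity is in hand, whereas the uniform-in-$N$ control of the expectation of the linear statistic, with a loss of only finitely many derivatives of the (merely $C^3$) test function, is the real random-matrix content and is the place where analyticity of $Q$ and the one-cut structure of $\mu_Q$ enter.
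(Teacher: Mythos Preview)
The paper does not give its own proof of this proposition; it is quoted verbatim as \cite[Corollary 4.4]{GoetzeVenker} and used as a black box. So there is no ``paper's proof'' to compare against here, and your write-up should be read as an independent derivation.

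Your two-step strategy is exactly the standard one and is correct. The concentration step is clean: on the ordered chamber the Hamiltonian $N\sum_jQ(x_j)-2\sum_{i<j}\log(x_j-x_i)$ has Hessian bounded below by $cN\cdot I$ (the logarithmic interaction contributes a positive semidefinite block for each pair), so Bakry--\'Emery gives a log-Sobolev inequality with constant $(cN)^{-1}$, and Herbst's argument applied to $F(x)=\sum_j\bar f(x_j)$ with $\|\nabla F\|_\infty\le\sqrt{N}\,\Lip(f)$ yields precisely $\exp\{\epsilon^2\Lip(f)^2/(2c)\}$ for the centred exponential moment. This is the route taken in \cite{GoetzeVenker} as well (and in the Anderson--Guionnet--Zeitouni monograph); your localisation to $D$ via a McShane extension plus the large-deviation confinement of all particles to a neighbourhood of $\supp\mu_Q$ is the right way to reconcile the global Lipschitz input of Herbst with the statement's use of $\Lip(f)$ on $D$ only.

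Your identification of the genuinely model-dependent step is also accurate: the bound $\big|\E_{N,Q}\sum_j\tilde f(x_j)-N\int\tilde f\,d\mu_Q\big|\le C(\|f\|_\infty+\|f^{(3)}\|_\infty)$ is where analyticity of $Q$, the one-cut regular structure of $\mu_Q$, and either loop equations or the Riemann--Hilbert/Christoffel--Darboux asymptotics for $R^1_{N,Q}$ enter. In \cite{GoetzeVenker} this is obtained from the known $1/N$-expansion of the first correlation function for real-analytic convex potentials; your loop-equation alternative would give the same conclusion. The only place to be slightly careful is that the Landau--Kolmogorov interpolation you invoke controls $\|\tilde f'\|_\infty,\|\tilde f''\|_\infty$ in terms of $\|f\|_\infty$ and $\|f^{(3)}\|_\infty$ on a \emph{bounded} interval, which is exactly what is needed since $D$ is bounded. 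Overall the argument is sound and matches the intended proof.
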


The key to the
local statistics is a linearization method, which transforms the bivariate statistic $\U$ into random linear statistics. We
give an outline for negative-definite $h$, that means $\hat{h}\leq0$, where $\F{h}(t):=\frac{1}{\sqrt{2\pi}}\int_{\R}e^{-its}h(s)ds$ denotes the Fourier transform of $h$. For such 
function, $-h$ may be seen as the
covariance function of a centered stationary Gaussian process $(f(t))_{t\in\R}$, i.e.~a stochastic process on $\R$ whose 
finite-dimensional distributions are all multivariate Gaussian and such that $\textup{Cov}(f(t),f(s))=-h(t-s)$. Then a quick 
computation verifies that
\begin{align}
 \exp\{-\frac{1}{2}\sum_{i,j}h(x_i-x_j)\}=\E \exp\{\sum_{j=1}^Nf(x_j)\},\label{e6}
\end{align}
where the expectation is w.r.t. the probability space underlying the Gaussian process.  S. Jansen has pointed out to the second 
author that the linearization \eqref{e6} is known in mathematical physics as the 
Sine-Gordon transformation. Furthermore,
\begin{align}
 \exp\{\U(x)\}=\E \exp\{\sum_{j=1}^Nf(x_j)-N\int fd\mu\}\label{e7}
\end{align}
holds. The term $\sum_{j=1}^Nf(x_j)-N\int fd\mu$ can now be added to  $N\sum_{j=1}^NV(x_j)$, resulting in a perturbation of lower 
order, which does not influence the equilibrium measure. The limiting bulk correlations are not altered by the function
$f$ either, as can be seen from Theorem \ref{universality}.
It should be noted that the scaling of the correlation functions is independent of
$f$. 
To summarize, the ensemble $P_{N,Q}^h$ is an average over determinantal ensembles $P_{N,V-f/N}$ with a small random perturbation 
of the external field. We will show that universality of $P_{N,Q}^h$ can be deduced from universality of the
invariant ensembles $P_{N,V-f/N}$. Note that the averaging over $f$ results in a weaker rate of convergence as uniformity in $f$ 
has to be shown (cf. Theorem \ref{universality} (1) and (2)).

\subsection{Alternative representation of correlation functions and truncation}\noindent

Let us define the generalized invariant
ensemble
\begin{align}
 P_{N,Q,f}^M(x):=\frac{1}{Z_{N,Q,f}^M}\prod_{1\leq i<j\leq N}\lv x_i-x_j\rv^2e^{-M\sum_{j=1}^N Q(x_j)+\sum_{j=1}^N f(x_j)},
\end{align}
where $M\in\mathbb{N}$. If $M=N$, we have $P_{N,Q,f}= P_{N,Q,f}^M$ and $ 
P_{N,Q}^M= P_{N,Q,f}^M$, if
$f=0$. 
Then the $k$-th correlation function of $P_{N,V}$ at  $t_1,\dots,t_k$ can be rewritten as
\begin{align*}
&R^k_{N,V}(t_1,\dots,t_k)\nonumber\\
 =&\frac{N!}{(N-k)!}\int_{\R^{N-k}}\frac{1}{Z_{N,V}}\exp\lee-N\sum_{j=k+1}^NV(x_j)+2\sum\limits_{i<j;\ i,j>k} \log\lvb x_j-x_i\rvb
\ree\nonumber\\
&\times \exp\lee-N\sum_{j=1}^k
V(t_1,\dots,t_k)+2\sum_{i<j;\ i,j\leq
k}\log\lvb t_i-t_j\rvb\ree\nonumber\\
&\times\exp\lee 2\sum_{i\leq k,\ j>k}\log\lvb t_i-x_j\rvb\ree dx_{k+1}\dots dx_N\nonumber\\
&=\frac{N!}{(N-k)!}F(t)\frac{Z_{N-k,V}^N}{Z_{N,V}}\E_{N-k,V}^N\exp\lee{2\sum_{i\leq k,\ j>k}\log\lv t_i-x_j\rv}\ree\quad 
\text{with}\nonumber\\
&F(t):=\exp\lee{-N\sum_{j=1}^kV(t_j)+2\sum_{i<j;\ i,j\leq
k}\log\lv t_i-t_j\rv}\ree
\end{align*}
Labeling the eigenvalues of the ensemble $P_{N-k,V}^N$ by $x_{k+1},\dots,x_N$ and denoting
\begin{align}
 O:=O_{N-k,V}^N(t,x):=2\sum_{i\leq k,\ j>k}\log\lv
t_i-x_j\rv+\log\big[F(t)\frac{Z_{N-k,V}^N}{Z_{N,V}}\big],
\end{align}
we arrive at the representation
\begin{align}
&R^k_{N,V}(t_1,\dots,t_k)=\frac{N!}{(N-k)!}\E_{N-k,V}^N\exp\lee{O}\ree.
\end{align}
By analogous steps, we represent the $k$-th correlation function $R_{N,Q}^{h,k}$ of $P_{N,Q}^h$  as 
\begin{align}
R_{N,Q}^{h,k}(t_1\dots,t_k)=\frac{N!/(N-k)!}{\E_{N,V}\exp\lee{\U(x)}\ree}\E_{N-k,V}^N\exp\lee{\U(t,x)+O}\ree,\label{e8}
\end{align}
where we abbreviated $\U(t_1,\dots,t_k,x_{k+1},\dots,x_N)$ by $\U(t,x)$. By \cite[Lemma 28]{GoetzeVenker} we can
assume that $x_{k+1},\dots,x_N\in [-L,L]$ for $L$ large enough. To be precise, the lemma shows that for each $k$ we have $L,C>0$
such that
for all $N$ and
for all $t_1,\dots,t_k$
\begin{align}
 \lv R_{N,Q}^{h,k}(t_1,\dots,t_k)-\frac{N!/(N-k)!}{\E_{N,V;L}\exp\lee{\U(x)}\ree}\E_{N-k,V;L}^N\exp\lee{\U(t,x)+O_L}
\ree\rv\leq
e^{-CN},\label{truncation}
\end{align}
where $\E_{N,V;L}^M$ denotes expectation w.r.t. $P_{N,V;L}^M$, which is the normalized restriction of $P_{N,V}^M$
 to $[-L,L]^N$ and $O_L$ is the analog of $O$, obtained by replacing integrations over $\R$ by integrations 
over
$[-L,L]$. For later use, let us also state one more inequality from that lemma,
\begin{align}
 N^{-1}R_{N,Q}^{h,1}(t)\leq \exp\{{CN}-c_1N[V(t)-c_2\log(1+t^2)]\}\label{truncation_Q},
\end{align}
valid for some constants $C,c_1,c_2>0$.
It will allow us to restrict the whole ensemble (instead of correlation functions) to some compact $[-L,L]$.

\subsection{Linearization and proof of Theorem \ref{universality} (3) for negative-definite $h$}\noindent

Let us give more details on the linearization method for negative-definite $h$. For such $h$, $-h$ can
indeed be seen as the covariance function of a centered stationary Gaussian process on $\R$ such that \eqref{e6} and \eqref{e7} hold. 
Since the sample paths of that process will become a part of the external field, we have to show analyticity. This can be done by 
invoking an explicit representation. Recall that 
$\F{h}(t)$ denotes the Fourier transform of $h$ and that we have
$-\hat{h}\leq0$.

For the representation of $f$, let $(B^1_t)_t, (B^2_t)_t$ denote two independent Brownian motions and define
\begin{align}
 f(t):=(2/\pi)^{1/4}\int_0^\infty \cos(ts)\sqrt{-{\hat{h}(s)}}dB_s^1+(2/\pi)^{1/4}\int_0^\infty
\sin(ts)\sqrt{-{\hat{h}(s)}}dB_s^2.\label{process_representation}
\end{align}
Here it is convenient to understand the stochastic integral  as a Wiener integral,
\begin{align*}
 \int_0^\infty g(s)dB^1_s:=-\int_0^\infty B^1_sdg(s)=-\int_0^\infty B^1_sg'(s)ds.
\end{align*}
which exists for $g$ sufficiently smooth and of a certain decay at $\pm\infty$ (note that 
by the law of the
iterated logarithm, $\lv B_t\rv$ is almost surely bounded by $\sqrt{2t\log\log t}$).

Using that Gaussianity of $f$ is equivalent to Gaussianity of all linear combinations of the random variables $\{f(t):t\in\R\}$, it 
is not hard to check that $f(t)_{t\in\R}$ defined in \eqref{process_representation} forms a Gaussian
process on $\R$. Furthermore, it has mean $0$ and covariance function
$-h$. 

By the assumption on the  exponential decay of $\hat{h}$, the rhs of
\eqref{process_representation} can be extended to a strip $\{x+iy:x\in\R, \lv y\rv<c\}$ for some $c>0$ which implies 
analyticity of $f$ in that strip a.s.. Let 
\begin{align}
 D:=(-L-\d,L+\d)\times (-c/2,c/2)\label{def_D}
\end{align}
 with $\d>0$. Then it also follows
from \eqref{process_representation} that the extended process $(f(w))_{w\in D}$ is a complex-valued centered Gaussian process
with covariance function $\E (f(w_1)\overline{f(w_2)})=-h(w_1-\overline{w_2})$.

Recall the abbreviation
\begin{align}
&S_k(t)=\det\left[\frac{\sin(\pi(t_i-t_j))}{\pi(t_i-t_j)}\right]_{1\leq i,j\leq k}\ \text{ and set }\label{S}\\
&\hat{t}_j:=(F_Q^h)^{-1}(t_j/N),\ C(t):=N^{-k}\prod_{j=1}^k\mu(\hat{t}_j)^{-1}.\nonumber
\end{align}
To prove Theorem \ref{universality} (3), in view of \eqref{truncation} we have to show
\begin{align}
&\frac{N!C(t)}{(N-k)!}\E_{N-k,V;L}^N\exp\lee{\U(\hat{t},x)+O_L}\ree-\E_{N,V;L}\exp\lee{\U(x)}\ree S_k(t)=\O\lb
N^{-1+\e}\rb\label{e9}
\end{align}
for any $\e>0$ in the prescribed uniformity. Here we used that by \eqref{concentration_U}, $\E_{N,V}\exp\lee{\U(x)}\ree$ 
is bounded and
bounded away from $0$, which carries over to the truncated setting. Note also the slight abuse of notation by not indicating the 
local 
scaling of the $t_j$'s in $O_L$.

\begin{proof}[Proof of Theorem \ref{universality} (3), negative-definite $h$]
 Let $h$ be negative-definite. Furthermore, let $\tilde{f}$ denote a centered Gaussian process with covariance function $-h$ and 
define $f:=\tilde{f}-\int\tilde{f}d\mu$. Then  we have
\begin{align*}
 &C(t)\frac{N!}{(N-k)!}\E_{{N}-k,V;L}^{N}\exp\lee{\U(\hat{t},x)+O_L}\ree\\
=&C(t)\frac{N!}{(N-k)!}\E
\Big[\E_{N-k,V;L}^{N}
\exp\lee{\sum_{j=1}^{N}f((\hat{t},x)_j)
+O_{L}}\ree\Big]\nonumber\\
=&\E\Big[\E_{N,V;L}
\exp\lee\sum_{j=1}^{N}f(x_j)\ree C(t)R_{N,V,f;L}^k(\hat{t}_1,\dots,\hat{t}_k)\Big],
\end{align*}
where we used the identity
\begin{align*}
 R_{N,V,f;L}^k(\hat{t}_1,\dots,\hat{t}_k)=\frac{N!/(N-k)!}{\E_{N,V,f;L}e^{\sum_{j=1}^N f(x_j)}}\E
\Big[\E_{N-k,V;L}^{N}
\exp\lee{\sum_{j=1}^{N}f((\hat{t},x)_j)
+O_{L}}\ree\Big],
\end{align*}
which can be obtained analogously to \eqref{e8}. Thus the first summand of \eqref{e9} equals
\begin{align}
 \E\Big[\E_{N,V;L}
\exp\lee\sum_{j=1}^{N}f(x_j)\ree\left( C(t)R_{N,V,f;L}^k(\hat{t}_1,\dots,\hat{t}_k)-S_k(t)\right)\Big].\label{toshow}
\end{align}
To apply Theorem \ref{universality} (2), we will replace the integration over all $f$ by an integration over $f$ with $\|f\|_D\leq 
N^\eta$, where $D$ is the complex domain defined in \eqref{def_D}. More precisely, we will show that 
\begin{align}
\E\,&\dopp{1}_{\{\|f\|_D>N^\eta\}}\Big[\E_{N,V;L}
\exp\lee\sum_{j=1}^{N}f({x_j})\ree\nonumber\\
&\times\Big(C(t)R_{N,V,f;L}^k(\hat{t}_1,\dots,\hat{t}_k)- 
S_k(t)\Big)\Big]=\O(e^{-cN^{2\eta}})\label{Gaussian_truncation}
\end{align}
for some $c>0$. This will be done by applying H\"older's inequality to separate
the
expectations of $\dopp{1}_{\{\|f\|_D> N^\eta\}}$, of $\E_{N,V;L}
\exp\lee\sum_{j=1}^{N}f({x_j})\ree$ and of 
\begin{align*}
C(t)R_{N,V,f;L}^k(\hat{t}_1,\dots,\hat{t}_k)- S_k(t).
\end{align*}
Let us reconsider the complex-valued process $(\tilde{f}(t))_{t\in D}$, which is the extension of the Gaussian process $\tilde{f}$ 
on $[-L,L]$ with covariance function $-h$. It follows from \eqref{process_representation} that real and imaginary parts of 
$\tilde{f}$ on $D$ are (real-valued) centered Gaussian
processes. Their covariance functions are readily computed as 
\begin{align*}
 &\E(\Re \tilde{f}(w_1)\Re \tilde{f}(w_2))=-\frac 12(\Re\,h(w_1-w_2)+\Re\, h(w_1-\overline{w_2})),\\
&\E(\Im \tilde{f}(w_1)\Im \tilde{f}(w_2))=\frac 12(\Re\, h(w_1-w_2)-\Re\, h(w_1-\overline{w_2})),\quad w_1,w_2\in D,
\end{align*}
giving the variances
\begin{align*}
 &\E(\Re \tilde{f}(w))^2= -\frac{1}{2}(\Re\,h(0)+\Re\, h(2i\Im w)),\\
&\E(\Im \tilde{f}(w))^2= \frac{1}{2}(\Re\,h(0)-\Re\, h(2i\Im w)), \quad w\in D.
\end{align*}
Now Borell's inequality (see e.g. \cite[Theorem 2.1.1]{AdlerTaylor}) states that the supremum $\|X\|_K$
of a continuous centered Gaussian process $X_t$ over a compact $K$ has sub-Gaussian tails, more precisely it is dominated by a 
Gaussian 
random variable
with a certain expectation and variance $\s:=\sup_{t\in K}\E
X_t^2$. Since the sum of sub-Gaussian variables is sub-Gaussian as well, we see with $\sup_{w\in D}\lv f^+(w)\rv\leq
\sup_{w\in \bar{D}}\lv \Re f^+(w)\rv+\sup_{w\in \bar{D}}\lv \Im f^+(w)\rv$ that  $\sup_{w\in D}\lv f^+(w)\rv$ is also sub-Gaussian. 
The
same reasoning leads to the conclusion that $\sup_{w\in D}\lv f(w)\rv$ is sub-Gaussian, giving
\begin{align}\label{Gaussian_LD}
 P\{\|f\|_D>N^\eta\}=\O(e^{-cN^{2\eta}})
\end{align}
for some $c>0$.

Next, we provide a bound for $\E_{N,V;L}
\exp\lee\sum_{j=1}^{N}f({x_j})\ree$.
Proposition \ref{Concentration} also holds for the ensemble truncated to $[-L,L]$ with an exponentially small error which we will
neglect. However, it is crucial that in the truncated case the
Lipschitz constant is taken over $[-L,L]$ instead of the whole real line.
Thus we get
\begin{align}
 \E_{N,V;L}
\exp\lee\sum_{j=1}^{N}f({x_j})\ree\leq \exp\lee{\frac{(\Lip{f})^2}{2\a_V}}+
 C(\|f\|_{[-L,L]}+\|f^{(3)}\|_{[-L,L]})\ree\label{e10}
\end{align}
for some $C$.
$\tilde{f}'$ is again a centered stationary Gaussian process with covariance function $-h''$ and thus, by similar arguments as 
above, sub-Gaussianity of $\Lip{f}$ and analogously also
sub-Gaussianity of $\|f^{(3)}\|_{[-L,L]}$ follow. Hence for some $\l>1$ (close to 1, coming from H\"older's inequality)
there is a constant $C$ such that for all $N$
\begin{align}
 \E\Big[\E_{N,V;L}
\exp\lee\sum_{j=1}^{N}f({x_j})\ree\Big]^\l<C,\label{linear_LD}
\end{align}
if $\a_Q$ (and hence $\a_V$) is large enough.

It is important to note that, as the processes are stationary on $\R$, the variances of $\Lip{f},\|f\|_{[-L,L]}$ and 
$\|f^{(3)}\|_{[-L,L]}$
and therefore the required $\a_Q$ are independent of the truncation threshold
$L$ and $k$.

Now we will estimate $C(t)R_{N,V,f;L}^k(\hat{t}_1,\dots,\hat{t}_k)- S_k(t)$. If $t_i=t_j$ for some $i\not=j$, then this quantity is 
0, hence we will only consider $t$ with distinct elements. For such $t$, \eqref{determinantal_relations} and 
$P_{N,V,f;L}(x)>0$ for any $x\in [-L,L]^N$ with 
distinct components, imply that 
$(K_{N,V,f;L}(t_i,t_j))_{1\leq i,j\leq k}=:A$ is a positive
definite matrix
and can hence be
written as $A=B^2$ for some matrix $B$. Using Hadamard's inequality then gives
\begin{align*}
 &\det A=\lb \det B\rb^2\leq \prod_{j=1}^k\sum_{i=1}^k\lv B_{ij}\rv ^2=\prod_{j=1}^kA_{jj},
\end{align*}
which is
\begin{align}
 &R_{N,V,f;L}^k\lbb \hat{t}_1,\dots,\hat{t}_k\rbb\leq \prod_{j=1}^kK_{N,V,f;L}(\hat{t}_j,\hat{t}_j)=
\prod_{j=1}^k
R_{N,V,f;L}^{1}(\hat{t}_j).\label{Hadamard}
\end{align}
 Let us employ the representation as inverse Christoffel function (see e.g. 
\cite{Totik})
\begin{align*}
&R^1_{N,V,f}(t)= \frac{e^{-NV+f}}{\l_N(e^{-NV+f},t)},\\
&\l_N(e^{-NV+f},t):=\inf_{P_{N-1}(t)=1}\int \lv
P_{N-1}(s)\rv^2e^{-NV(s)+f(s)}ds,
\end{align*}
where the infimum is taken over all polynomials $P_{N-1}$ of at most degree $N-1$ with $P_{N-1}(t)=1$. This representation 
immediately implies the bound
\begin{align*}
 R_{N,V,f;L}^{1}(\hat{t}_j)\leq R_{N,V;L}^{1}(\hat{t}_j)e^{2\|f\|_{[-L,L]}}.
\end{align*}
To bound $R_{N,V;L}^{1}$, we can now use the uniform convergence stated in Theorem \ref{universality} (2) together with the 
boundedness of the sine kernel, giving
\begin{align}
 &N^{-1}R_{N,V,f;L}^{1}(\hat{t}_j)\leq C e^{2\|f\|_{[-L,L]}}\ \text{ and hence}\nonumber\\
&C(t)R_{N,V,f;L}^k\lbb
\hat{t}_1,\dots,\hat{t}_k\rbb\leq C'e^{2k\|f\|_{[-L,L]}},\label{corrinequality}
\end{align}
where $C,C'$ do not depend on $f$.
As $\|f\|_{[-L,L]}$ is sub-Gaussian, we have for some $\l'>1$ and some $C''$ that for all $N$
\begin{align}
 \E\lv C(t)R_{N,V,f;L}^k(\hat{t}_1,\dots,\hat{t}_k)- S_k(t) \rv^{\l'}\leq C''.\label{correlation_LD}
\end{align}
Combining \eqref{Gaussian_LD}, \eqref{linear_LD} and \eqref{correlation_LD}, it follows that \eqref{Gaussian_truncation} is of
order $\O(e^{-cN^{2\eta}})$. Thus Theorem \ref{universality} (2) yields
\begin{align*}
 \E\Big[\E_{N,V;L}
\exp\lee\sum_{j=1}^{N}f({x_j})\ree\Big( C(t)R_{N,V,f;L}^k(\hat{t}_1,\dots,\hat{t}_k)- S_k(t)\Big)\Big]=\O(N^{-1+\e}),
\end{align*}
for any $\e>0$, given that $\a_Q$ is large enough.
\end{proof}

\subsection{Proof of Theorem \ref{universality} (3) for general $h$}\noindent

A general $h$ may be decomposed into positive-definite functions as follows. Let $_\pm$ denote positive and non-positive part and 
write $\hat{h}=(\hat{h})_+-(\hat{h})_-$. Then $h=h^+-h^-$, where $h^\pm:=\widehat{(\hat{h})_\pm}$ and furthermore, $h^\pm$ are 
positive-definite. To use $h^\pm$ for our linearization method, we need these functions to be real-analytic. The real-analyticity 
is 
somewhat surprisingly equivalent to the exponential decay of $\hat{h}$ at infinity. On the one hand, exponential decay of $\hat{h}$ 
allows to extend  $h^\pm$ to the complex plane via Fourier inversion, thereby showing real-analyticity. On 
the 
other hand, \cite[Theorem 2]{LukacsSzasz} tells us that any real-analytic positive-definite function has a Fourier transform of 
exponential decay.

Instead of $-h$, we will use $-h_z:=zh^++h^-$, $z>0$, as a covariance function and use complex analysis to show the desired.
  To this end define for complex $z\in\C$
\begin{align}
 \U_z(x):=&\frac{z}{2}\lbb\sum_{i,j=1}^Nh^+(x_i-x_j)-\left[h^+_\mu(x_i)+h^+_\mu(x_j)-h^+_{\mu\mu}\right]\rbb\label{e47}\\
+&\frac{1}{2}\lbb\sum_{i,j=1}^Nh^-(x_i-x_j)-\left[h^-_\mu(x_i)+h^-_\mu(x_j)-h^-_{\mu\mu}\right]\rbb\label{e48}.
\end{align}
Again, we have to show
\begin{align}
&\frac{N!C(t)}{(N-k)!}\E_{N-k,V;L}^N\exp\lee{\U_z(\hat{t},x)+O_L}\ree-\E_{N,V;L}\exp\lee{\U_z(x)}\ree S_k(t)=o(1)\label{U_z}
\end{align}
for $z=-1$. The proof for negative-definite $h$ implies \eqref{U_z} for $z>0$. This is basically enough, as Vitali's theorem 
implies, which we state for the convenience of the reader (cf. \cite[5.21]{Titchmarsh}): Let $g_n(z)$ be a sequence of analytic 
functions on a domain $U\subset\C$ with $\lv g_n(z)\rv\leq M$ for all $n$ and all $z\in U$. Assume that $\lim_{n\to\infty}g_n(z)$ 
exists for a set of $z$ having a limit point in $U$. Then $\lim_{n\to\infty}g_n(z)$ exists
for all $z$ in the interior of $U$ and the limit is an analytic function in $z$.

The set containing a limit point will be chosen as a small interval $(0,\d)$ for some $\d>0$. We remark in passing that $\d$ can be 
arbitrarily small and as a consequence $h^+$ has no influence on the necessary size of the convexity constant $\a_Q$.

To transfer the required uniformity in the $t_j$ from the case $z>0$ to $z=-1$ is a technical issue as taking absolute values and 
suprema would destroy the analyticity in $z$, which is necessary for the application of Vitali's theorem. Therefore, we will use the 
following characterization of uniform convergence in terms of sequences:
A sequence of continuous real-valued functions $(g_n)_n$, defined on $\R^l$, converges uniformly on the
sequence of compact sets $(B_n)_n$, $B_n\subset \R^l$ towards a continuous function $g$ if and only if for all sequences
$(n_m)_m\subset \mathbb{N}$ with $\lim_{m\to\infty}n_m=\infty$ and all sequences $(t_m)_m$ with $t_m\in B_{n_m}$ we have
$\lim_{m\to\infty}g_{n_m}(t_m)-g(t_m)=0$.

We will take $B_n:=I_N$. Let $(N_m)_m\subset\N$ be a
sequence going to
infinity and $(t_{(m)})_m$ be a sequence with $t_{(m)}\in I_{N_m}$.
Let us define $\map{W_m}{\C}{\C}$ as
\begin{align}
 W_m(z):=&C(t_{(m)})\E_{{N_m}-k,V;L}^{N_m}\exp\lee{\U_{z}(\hat{t}_{(m)},x)+O_L}\ree\nonumber\\
&-\E_{N_m,V;L}\exp\lee{\U_{z}(x)}
\ree S_k(t_{(m)}).\label{W_m}
\end{align}

Note that we suppressed some of the $m$-dependencies. 
Clearly, $(W_m)_m$ is a sequence of
analytic functions.

\begin{remark}\label{remark_Vitali}
 Vitali's theorem allows to deduce convergence in a region of the complex plane from convergence in another region. As rates of 
convergence might well be different according to the region one is looking at, it is clear that we cannot transfer the rate 
$\O(N^{-1+\e})$, valid for $z>0$, to $z=-1$ with the same technique. This seems to be  rather a technical issue, we in fact 
believe 
that the correct rate should also be (at least) $\O(N^{-1+\e})$.
\end{remark}

\begin{proof}[Proof of Theorem \ref{universality} (3), general $h$]
We will often drop the dependence on $m$ in the following. For positive $z$, we can apply the linearization procedure as described 
above, as then $h_z$ is a positive-definite function. Thus we have $W_m(z)=o(1)$ for any $z\in(0,\d)$ for some $\d$ provided $\a_Q$ 
is large enough, where $\d$ is chosen so small that the lower bound on $\a_Q$ does not depend on $\d$. Hence, to apply Vitali's 
theorem, we need to show boundedness uniform in $m$ and $z$ from a domain containing $-1$ and $(0,\d)$. This domain may 
in fact be chosen as the halfplane $\{z\in\C\,:\,\Re z<\d\}$, which can be seen as follows. Bounding $W_m$ termwise, 
\eqref{e47} shows that $\Im z$ only gives a phase which vanishes by taking absolute values, hence we can concentrate on real $z$. 
For $z<0$, \eqref{e47} is non-positive, since it is minus $1/2$ times the variance of a Gaussian random variable (cf. \eqref{e7}), 
implying that in this case the influence of \eqref{e47} in bounding $W_m$ will vanish as well, due to taking absolute values. It is 
thus sufficient to consider $z>0$. In this case we get as above that $W_m(z)$ equals
\begin{align}
\E\Big[\E_{N,V;L}
\exp\lee\sum_{j=1}^{N}f_z({x_j})\ree\Big(C(t)R_{N,V,f_z;L}^k(\hat{t}_1,\dots,\hat{t}_k)- S_k(t)\Big)\Big],\label{e11}
\end{align}
where $f_z$ is a centered Gaussian process with covariance function $-h_z$. Now, the bounds \eqref{e9} and \eqref{corrinequality} 
can be used again to show uniform boundedness in $m$. To check that these bounds are uniform in $z\in(0,\d)$ for $\d>0$ small, is 
straightforward and left to the reader. The theorem is proved.
\end{proof}

\section{Proofs of remaining statements}\label{sec_remaining_res}

\begin{proof}[Proof of Theorem \ref{thrmspacings}]
Let an interval $I_N\subset [0,N]$ be given such that\\  $\liminf_{N\to\infty}\textup{dist}(I_N,\{0,N\})/N>0$. We will apply 
Theorem 
\ref{thrmgeneral} with $P_N$ being the distribution of the \textit{unfolded} ensembles $P_{N,V,f}$ and 
$P_{N,Q}^h$, respectively. We start with the former case. Let $F_V^{[-1]}$ be a function with the following properties:
\begin{enumerate}
 \item $\map{F_V^{[-1]}}{\R}{J}$ strictly monotonically increasing, continuously differentiable,
\item $F_V^{[-1]}(\R)=J$,
\item $F_V^{[-1]}(t)=F_V^{-1}(t)$ for $t\in U$ with $U\supset \liminf_{N\to\infty}I_N/N$ open.
\end{enumerate}
Now, we define
\begin{align*}
P_N(x):=\prod_{j=1}^N {F_V^{[-1]}}'(x_j)P_{N,V,f}(F_V^{[-1]}(x_1),\dots,F_V^{[-1]}(x_N)).
\end{align*}
By properties (1) and (2), $P_N$ is a probability measure on $\R^N$. By property (3), we have 
${F_V^{[-1]}}'(t)=(\mu_V(F_V^{-1}(t)))^{-1}$ for all $t$ such that $Nt\in I_N$ for $N$ large enough.
Then the correlation function $R_{N}^k$ is (for $N$ large enough) on $I_N^k$ given by
\begin{align*}
 R_N^k(t_1/N,\dots,t_k/N)=\prod_{j=1}^k \mu_V(F_V^{-1}(t_j/N))^{-1}R_{N,V,f}^k(F_V^{-1}(t_1/N),\dots, F_V^{-1}(t_k/N)).
\end{align*}

It remains to check conditions (1) and (2) in Theorem \ref{thrmgeneral}. For condition (1), we 
may use inequality \eqref{corrinequality}, giving
\begin{align*}
\prod_{j=1}^k \mu_V(\hat{t}_j)^{-1}R_{N,V,f}^k\lbb\hat{t}_1,\dots,\hat{t}_k\rbb\leq C'\prod_{j=1}^k 
\mu_V(\hat{t}_j)^{-1}e^{2k\|f\|_\infty},
\end{align*}
which is valid also for ensembles with non-compact support. Here we use again the abbreviation $\hat{t}_j:=F_V^{-1}(t_j/N)$.
Positivity of $\mu_V$ on $(a_V,b_V)$ yields condition (1) for the 
unitary invariant ensembles. 

For condition (2), another application of Hadamard's inequality may be used, which we cite from \cite[Lemma 4.3.2]{AGZ}. For 
kernels $K_1, K_2$, defined on some locally compact $A\times A$ and all $t\in A^k$, the following inequality holds:
\begin{align}
 \left\lv \det (K_1(t_i,t_j))_{1\leq i,j\leq k}-\det (K_2(t_i,t_j))_{1\leq i,j\leq k}\right\rv\leq 
k^{k/2+1}\|K_1-K_2\|\cdot\max(\|K_1\|,\|K_2\|)^{k-1}.\label{Hadamard2}
\end{align}
Here $\|K\|:=\sup_{t,s\in A}\lv K(t,s)\rv$ denotes the sup-norm of a kernel $K$ on $A\times A$. Choosing 
$K_1(t,s):=(N\mu_V(F_V^{-1}(t/N)))^{-1}K_{N,V,f}(F_V^{-1}(t/N),F_V^{-1}(s/N))$ and $K_2$ as the sine kernel, we find with $A:=I_N$ 
that by Theorem \ref{thrmgeneral} (1)
$\|K_1-K_2\|=\O(1/N)$. Moreover, the boundedness of the sine kernel and the uniform convergence of $K_1$ imply that 
$\max(\|K_1\|,\|K_2\|)\leq C$ for some $C>1$. This proves condition (2) of Theorem \ref{thrmgeneral} for $P_{N,V,f}$.

For the repulsive particle systems $P_{N,Q}^h$, we will first truncate the ensemble to a compact $[-L,L]^N$ and apply Theorem 
\ref{thrmgeneral} to the truncated ensemble. This will be convenient as the truncation threshold $L$ in \eqref{truncation} depends 
on the order of the correlation function $k$ and in the subsequent linearization several constants depend on $L$, which would 
complicate checking conditions (1) and (2) of Theorem \ref{thrmgeneral} significantly.

For any $L>0$ we have the crude bound
\begin{align*}
  &\E_{N,Q}^h\left( \dopp{1}_{([-L,L]^N)^c}(x)\sup_{s \in \mathbb R} \left| \frac1{\lv I_N\rv}\int_0^s d\s(I_N,\tilde{x}) - G(s) 
\right|\right)\leq N P_{N,Q}^h\left(([-L,L]^N)^c\right)\\
&\leq N\int_L^\infty R_{N,Q}^{h,1}(t)dt.
\end{align*}
Now, \eqref{truncation_Q} tells us that choosing $L$ large enough, the last expression is bounded by $\exp(-N)$, hence negligible. 
This also shows that replacing the normalizing constant $Z_{N,Q}^h$ by its truncated variant $Z_{N,Q;L}^h$, obtained by replacing 
the integrations in $Z_{N,Q}^h$ by integrations over $[-L,L]$, only results in an error of at most $\exp(-N)$. Summarizing, we get
\begin{align*}
 &\E_{N,Q}^h\left(\sup_{s \in \mathbb R} \left| \frac1{\lv I_N\rv}\int_0^s d\s(I_N,\tilde{x}) - G(s) 
\right|\right)\\
&=\E_{N,Q;L}^h\left(\sup_{s \in \mathbb R} \left| \frac1{\lv I_N\rv}\int_0^s 
d\s(I_N,\tilde{x}) - G(s) 
\right|\right)+\O(e^{-N}),
\end{align*}
where $\E_{N,Q;L}^h$ denotes expectation w.r.t.~ the truncated ensemble $P_{N,Q;L}^h$. Note that both ensembles have the 
same limiting measure $\mu_Q^h$ and therefore, there is no change in the unfolding of the particles. The case of $\hat{\s}$ instead 
of $\lv I_N\rv^{-1}\s$ is completely analogous.

The same procedure as above can now be applied to fit $P_{N,Q;L}^h$ into the framework of Theorem \ref{thrmgeneral}.
Condition (1) then follows using the linearization procedure introduced in 
Section 5. Indeed, an inequality of the form
\begin{align*}
 R_{N,Q}^{h,k}\lbb\hat{t}_1,\dots,\hat{t}_k\rbb\leq C
\end{align*}
for some $C$, uniformly in $t_1,\dots,t_k$, is equivalent to 
\begin{align}
 \E_{N-k,V;L}^N\exp\lee{\U_{z}(\hat{t},x)+O_L}\ree\leq C'\label{inequU}
\end{align}
uniformly in $t_1,\dots,t_k$ for small positive $z$. By linearization, the l.h.s.~of \eqref{inequU} is equal to 
\begin{align*}
 \E\Big[\E_{N,V;L}
\exp\lee\sum_{j=1}^{N}f(x_j)\ree R_{N,V,f;L}^k(\hat{t}_1,\dots,\hat{t}_k)\Big],
\end{align*}
to which now \eqref{corrinequality} may be applied for almost all $f$. The sub-Gaussianity of $\|f\|_\infty$ gives the desired 
bound.

 For condition (2), we can use the same arguments and \eqref{Hadamard2}.
\end{proof}

\begin{proof}[Proof of Corollary \ref{cor_spacings}]
Let $I_N$ be a sequence of proper compact sub-intervals of $[0,N]$ with $I_N/N\to[0,1]$. We will use the simple inequality
\begin{align}
 &\E_N\sup_{s\in\R}\left\lv\frac{1}{N-1}\int_0^sd\s([0,N],\tilde{x})-G(s)\right\rv\nonumber\\
&\leq  
\E_N\sup_{s\in\R}\left\lv\frac{1}{N-1}\int_0^sd\s(I_N,\tilde{x})-G(s)\right\rv+\E_N\frac{1}{N-1}\int_0^\infty d\s([0,N]
\setminus I_N,\tilde{x}),\label{transitionestimate}
\end{align}
 The last term is 
\begin{align*}
 \frac{1}{N-1}\lb\E_N\#\left\{j\,:\,x_j\in F^{-1}\lb\frac{[0,N]\setminus I_N}{N}\rb\right\}-1\rb,
\end{align*}
where $F=F_V$ or $F=F_Q^h$, respectively. $F^{-1}(([0,N]\setminus I_N)/{N})$ consists of the two intervals $(-\infty,a_V+\e_N')$ 
and $(b_V-\e_N,\infty)$ for two sequences of positive numbers $\e_N, \e_N'$, both converging to 0. Let us exemplarily deal with 
$(b_V-\e_N,\infty)$. We have
\begin{align}
 \frac{1}{N-1}\E_N\#\left\{j\,:\,x_j\in (b_V-\e_N,\infty) \right\}=\frac{1}{N-1}\int_{b_V-\e_N}^\infty 
R^1_N(t)dt.\label{estimate1}
\end{align}
For $R^1_{N,V,f}$, \cite[Theorem 1.5]{KSSV} provides the following asymptotics. For a certain $c>0$ and all $t\in \hat{J}$ with
$t>1+c^{-1}N^{-2/3}$, we have
\begin{align}
 &R^1_{N,V,f}(\l_{V,f}(t))\nonumber\\
&=\frac{1}{2\pi(b_{V,f}-a_{V,f}) N}e^{-N\eta_{V,f}(t)}\lb\frac{1}{t^2-1}+\O\lb\frac{1}{N( 
t-1)^{5/2}}\rb+\O\lb\frac{1}{N}\rb\rb,\label{boundcorrelation}
\end{align}
where 
\begin{align}
 \eta_V(t):=\int_1^t\sqrt{s^2-1}G_V(s)ds\label{def_eta}
\end{align}
 and $G_V$ has been defined in \eqref{def_G}. Similarly 
to \eqref{approx1}, \eqref{approx2} and \eqref{approxrho}, we can show $\eta_{V,f}=\eta_V(1+\O(\|f\|/N))$. As $V'$ is strictly 
increasing and $\lim_{t\to\infty}V(t)=\infty$, we have for all $t$ large enough and $s\in[-1,1]$ that 
$(V\circ\l_V)'(t)-(V\circ\l_V)'(s)\geq c'$ for some $c'>0$. Thus (cf. \eqref{def_h}) $h_V(t,s)\geq \dfrac{c'}{t-s}$ and  (cf. 
\eqref{def_G})
\begin{align*}
 G_V(t)\geq \frac{c'}{\pi}\int_{-1}^1\frac{1}{(t-s)\sqrt{1-s^2}}ds=\frac{c'}{\sqrt{t^2-1}},
\end{align*}
where the last equality is due to $t>1$. Hence $\eta_V(t)\geq c't$ and we conclude that for some $c''>0$
\begin{align}
 \int_{b_V+c''N^{-2/3}}^\infty R_{N,V,f}^1(t)dt=o(1).\label{estimate2}
\end{align}
With the linearization technique, this bound can be transfered to the ensemble $P_{N,Q}^h$. We note in passing that due to the 
mixing over $f$, for $P_{N,Q}^h$ the bound for the first correlation function is somewhat worse than \eqref{boundcorrelation}, see 
\cite{KV} for details.

For the edge regime, i.e. $b_V-\e_N<t<b_V+c''N^{-2/3}$, the following asymptotics are given in \cite[Theorem 1.5]{KSSV}, valid for 
$1-c<t<1+cN^{-2/5}$ (with the same $c$ as above),
\begin{align*}
 &R^1_{N,V,f}(\l_{V,f}(t))=\frac{2N^{-1/3}\g_{V,f}}{b_{V,f}-a_{V,f}}\KAi\lb 
\g_{V,f}N^{2/3}(t-1),\g_{V,f}N^{2/3}(t-1)\rb(1+r(t)),
\end{align*}
where $\g_{V,f}:=2^{-1/3}G_{V,f}(1)^{2/3}$,
\begin{align*}
 r(t)=\begin{cases}
	\O(1-t)+\O(N^{-2/3}), &\text{ if } t\leq 1\\
	\O(N(t-1)^{5/2})+\O(N^{-2/3}),  &\text{ if } t\geq 1
      \end{cases}
\end{align*}
is uniform in $f$ as above and as before, we can neglect the $f$-dependence. Here $\KAi$ is the Airy kernel,
\begin{align*}
 \KAi(t,s):=\frac{\Ai(t)\Ai'(s)-\Ai'(t)\Ai(s)}{t-s}.
\end{align*}
On the diagonal, the 
Airy kernel is given as $\KAi(t,t)=\Ai'(t)^2-t\Ai(t)^2$. 
We are therefore left 
to estimate
\begin{align*}
 R_{N,V}^1(t)=\frac{2N^{-1/3}\g_{V}}{b_V-a_V}\KAi\lb \g_{V}N^{2/3}(\l_V^{-1}(t)-1),\g_{V}N^{2/3}(\l_V^{-1}(t)-1)\rb.
\end{align*}
  Let us first consider the case $t<b_V$, i.e. $\l_V^{-1}(t)<1$. Setting 
$\zeta(t):=\frac{2}{3}t^{3/2}$, \cite[10.4.60, 10.4.62]{AbramowitzStegun} provide the following asymptotics, valid as $t>0$,
\begin{align}
 &\Ai(-t)=\pi^{-1/2}t^{-1/4}\lb \sin\lbb \zeta(t)+\frac\pi4\rbb+\O(\zeta(t)^{-1})\rb,\label{Airyasymptotics}\\
&\Ai'(-t)=-\pi^{-1/2}t^{1/4}\lb \cos\lbb \zeta(t)+\frac\pi4\rbb+\O(\zeta(t)^{-1})\rb,
\label{Airy'asymptotics}
\end{align}
from which we conclude
\begin{align*}
 \Ai(-t)^2=\O(t^{-1/2}), \ \Ai'(-t)^2=\O(t^{1/2}).
\end{align*}
Thus we get uniformly for $b_V-\e_N<t\leq1$
\begin{align*}
 N^{-1/3}\KAi\lb \g_{V}N^{2/3}(\l_V^{-1}(t)-1),\g_{V}N^{2/3}(\l_V^{-1}(t)-1)\rb=\O(\l_V^{-1}(t)-1)=o(1).
\end{align*}
For $1\leq t\leq 1+cN^{-2/5}$, the formulae \cite[10.4.59, 10.4.61]{AbramowitzStegun} provide
\begin{align*}
  \Ai(t)^2=\O(t^{-1/2}e^{-\frac43 t^{3/2}}), \ \Ai'(t)^2=\O(t^{1/2}e^{-\frac43 t^{3/2}}), 
\end{align*}
which yields uniformly for $b_V\leq t<b_V+c''N^{-2/5}$
\begin{align*}
 R_{N,V}^1(t)=o(1)
\end{align*}
in an analogous way. This gives 
\begin{align}
 \int_{b_{V}-\e_N}^{b_V+c''N^{-2/5}}R_{N,V,f}^1(t)dt=o(1).\label{estimate3}
\end{align}
This estimate can by the now familiar procedure be extended to the ensemble $P_{N,Q}^h$. Altogether we have combining 
\eqref{estimate1}, \eqref{estimate2} and \eqref{estimate3}
\begin{align*}
 \frac{1}{N-1}\E_N\#\left\{j\,:\,x_j\in (b_V-\e_N,\infty) \right\}=o(1).
\end{align*}
Returning to \eqref{transitionestimate}, we wish to apply Theorem \ref{thrmgeneral} for an interval $I_N$ which exhausts $[0,N]$ for $N\to\infty$. It suffices to show uniform convergence of
the correlation functions towards the sine kernel, as conditions (1) and (2) of Theorem \ref{thrmgeneral} will then follow exactly 
as in \eqref{corrinequality} and \eqref{Hadamard2}. Hence our task is to extend the proof of Theorem \ref{universality} to regions 
close 
to the edges.

 Formula \eqref{bulk-formula} was valid for $r,s\in(-1+\d,1-\d)$ for some arbitrary but fixed 
$\d>0$. As now $I_N$ covers some of the left and right edge regions, we also have to consider correlations 
between particles from 
different regions.
  Here the general \cite[Theorem 1.3]{KSSV} is useful which gives 
\begin{align*}
 K_{N,V,f}(t,s)=\frac{k_1(t)k_2(s)-k_2(t)k_1(s)}{t-s}+\O(N^{-1}),
\end{align*}
where $k_1,k_2$ are bounded functions and the $\O$ term is uniform for $t,s$ from bounded subsets of $J$. This means that $K_{N,V,f}(t,s)$ is bounded if $t-s$ is bounded away from 0, which
includes the case of one particle at the left and the other at the right edge. Dividing by $N$, we see that Theorem \ref{universality} (1) and (2) hold in this case. The correlations between bulk particles and 
edge particles are more subtle, as these regions are adjacent and a transition from sine kernel to Airy kernel statistics occurs.
Without loss of generality we will consider this transition at the right edge only. 
To state the analogue of \eqref{bulk-formula} at the edge, we need some more notation. Let us remark that we restrict ourselves to 
the case $t\leq b_V$, the void region not being of interest here thanks to \eqref{estimate2} and \eqref{estimate3}.

Recalling \eqref{def_rho}, define
\begin{align*}
 \map{\xi_V}{\R}{\R}, \ \xi_V(t):=2\pi\int_t^1\rho_V(s)ds
\end{align*}
and for some $0<\d<1$
\begin{align}
 &\map{f_{N,V}}{[1-\d,1]}{\R},\ f_{N,V}(t):=-N^{2/3}\lb \frac{3}{4}\xi_V(t)\rb^{2/3},\label{def_f}\\
&\map{d_V}{[1-\d,1]}{\R},\ d_V(t):=\hat{a}(t)^{-1}\g_V^{-1/4}\lb \frac34\xi_V(t)\rb^{1/6}.\label{def_d}
\end{align}

 Abbreviating $f_N:=f_{N,V}$, \cite[Proposition 4.1]{KSSV} states that for some $\d>0$ uniformly for $r,s\in 
[1-\d,1]$
\begin{align*}
 &\frac{b_{V,f}-a_{V,f}}{2}K_{N,V,f}(\l_{V,f}(r),\l_{V,f}(s))=\KAi(f_N(r),f_N(s))\frac{f_N(r)-f_N(s)}{r-s}\\
&+\lb 
\frac{\Ai(f_N(r))\Ai'(f_N(s))}{d_V(s)}+\frac{\Ai(f_N(s))\Ai'(f_N(r))}{d_V(r)}\rb\frac{d_V(r)-d_V(s)}{r-s}+\O(1/N).
\end{align*}
Note that this formula covers a part of the bulk region.
By \eqref{Airyasymptotics} and \eqref{Airy'asymptotics},
\begin{align}
&\KAi(f_N(r),f_N(s))(f_N(r)-f_N(s))\nonumber
\\&=\Ai(f_N(r))\Ai'(f_N(s))-\Ai'(f_N(r))\Ai(f_N(s))\label{Airyproducts}\\
&=\frac1\pi\Big[\cos\lb\zeta(f_N(s))-\frac\pi4\rb\cos\lb\zeta(f_N(r))+\frac
\pi4\rb\nonumber\\
&-\cos\lb\zeta(f_N(r))-\frac\pi4\rb
\cos\lb\zeta(f_N(s))+\frac\pi4\rb\Big]+\O\lb\zeta(\lv f_N(r)\rv)^{-1}+\zeta(\lv f_N(s)\rv)^{-1}\rb\nonumber\\
&=\frac1\pi\sin(\zeta(f_N(r))- \zeta(f_N(s)))+\O\lb\zeta(\lv f_N(r)\rv)^{-1}+\zeta(\lv f_N(s)\rv)^{-1}\rb\nonumber\\
&=\frac1\pi\sin\lb 
N\pi\int_s^r\rho_V(u)du\rb+\O\lb \frac{\xi_V(r)^{-1}+\xi_V(s)^{-1}}{N}\rb.\nonumber
\end{align}
Hence we have for $r,s<1$, $r\not=s$,
\begin{align}
 &\frac{b_{V,f}-a_{V,f}}{2}K_{N,V,f}(\l_{V,f}(r),\l_{V,f}(s))=\frac1\pi\frac{\sin\lb 
N\pi\int_s^r\rho_{V,f}(u)du\rb}{r-s}\nonumber\\
&+\lb 
\frac{\Ai(f_N(r))\Ai'(f_N(s))}{d_V(s)}+\frac{\Ai(f_N(s))\Ai'(f_N(r))}{d_V(r)}\rb\frac{d_V(r)-d_V(s)}{r-s}\label{restAiry}\\
&+\O\lb \frac{\xi_{V,f}(r)^{-1}+\xi_{V,f}(s)^{-1}}{N(r-s)}\rb+\O(1/N).\nonumber
\end{align}
We have thus recovered the leading term of \eqref{bulk-formula}. However, the $\O$-term involving $\xi(r)^{-1}$ will only be small 
for $r$ and $s$ being not too close to each other. We will therefore first consider the case of $\lv 
r-s\rv\geq N^{-2+p}$ for some small $p>0$.
From
\eqref{Airyasymptotics}, 
\eqref{def_f}, \eqref{def_d} and the boundedness of the derivative of $d_V$ it follows that for $1-\d<r,s<1-\e'_N$ with $\e'_N>0$ 
converging to 0 slowly enough, we have
\begin{align*}
  &\frac{b_{V,f}-a_{V,f}}{2N}K_{N,V,f}(\l_{V,f}(r),\l_{V,f}(s))=\frac{\sin\lb 
N\pi\int_s^r\rho_{V,f}(u)du\rb}{\pi N(r-s)}+\O(N^{-\iota})
\end{align*}
for some $\iota>0$, uniformly for $r,s$ with $\lv 
r-s\rv\geq N^{-2+p}$.   Now we can proceed as in the proof of Theorem \ref{universality} (1) and (2).  We get with properly chosen 
$\e_N>0$ converging to 0, that 
uniformly in $t,s\in [(1-\d)N,(1-\e_N)N]$
 \begin{align}
\frac{1}{N \mu_V(F_V^{-1} (\frac{t}{N}))} K_{N,V,f} \left( F_V^{-1}\left(\frac{t}{N}\right),F_V^{-1}\left(\frac{s}{N} \right) 
\right)=
\frac{\sin (\pi (t-s))}{\pi (t-s)} + \mathcal O\left(N^{-\iota}\right).\label{AirytoSine}
\end{align}
Here we already replaced $F_{V,f}$ and $\mu_{V,f}$ by their counterparts $F_V$ and $\mu_V$, which has been justified in the proof of 
Theorem \ref{universality}.

For $\lv r-s\rv< N^{-2+p}$, we may use Taylor's expansion in $s$ at the point $r$ in \eqref{Airyproducts} together with $\Ai''(t)=t\Ai(t)$ to obtain
\begin{align}
 &\KAi(f_N(r),f_N(s))(f_N(r)-f_N(s))\nonumber\\
&=f_N'(r)f_N(r)\Ai(f_N(r))^2(s-r)-f_N'(r)\Ai'(f_N(r))^2(s-r)\label{Airy1}\\
&+\frac12\Ai(f_N(r))\big[f_N'(\nu)^2\Ai(f_N(\nu))+f_N(\nu)f_N''(\nu)\Ai(f_N(\nu))\nonumber\\
&\qquad \qquad \qquad+f_N(\nu)f_N'(\nu)^2\Ai'(f_N(\nu))\big](s-r)(s-\nu)\label{Airy2}\\
&+\frac12\Ai'(f_N(r))\left[f_N(\l)f_N'(\l)\Ai(f_N(\l))+f_N''(\l)\Ai'(f_N(\l))\right](s-r)(s-\l)\label{Airy3}
\end{align}
for certain $\nu,\l$ between $s$ and $r$. Using $\KAi(t,t)=\Ai'(t)^2-t\Ai(t)^2$, we see that \eqref{Airy1} equals
\begin{align*}
 f_N'(r)\KAi(f_N(r),f_N(r))(r-s).
\end{align*}
>From \eqref{Airyasymptotics}, \eqref{Airy'asymptotics}, \eqref{def_f} and  $\lv r-s\rv< N^{-2+p}$, we find 
\begin{align*}
 &\frac{b_{V,f}-a_{V,f}}{2N}K_{N,V,f}(\l_{V,f}(r),\l_{V,f}(s))=\frac{f_N'(r)}{N}\KAi(f_N(r),f_N(r))+\O(N^{-1+p})
\end{align*}
with the $\O$ term being uniform in $r,s$. Using \eqref{Airyasymptotics} and \eqref{Airy'asymptotics} again, we can for 
$t\to-\infty$ derive 
\begin{align*}
 \KAi(t,t)=\frac{1}{\pi}(-t)^{1/2}(1+\O((-t)^{-3/2})).
\end{align*}
With these asymptotics and \eqref{def_f} we see that
\begin{align*}
\frac{f_N'(r)}N\KAi(f_N(r),f_N(r))=\rho_V(r)+\O(N^{-\iota})
\end{align*}
uniformly for $1-\d\leq r<1-\e'_N$ with $\e'_N\to\infty$ slowly enough, which establishes \eqref{AirytoSine} also close to the 
diagonal.
As written above, this suffices to apply Theorem \ref{thrmgeneral} to finish the proof. The transfer to correlation functions is 
made with \eqref{determinantal_relations}  and to $P_{N,Q}^h$ with the linearization 
method.
This proves Corollary \ref{cor_spacings}.
\end{proof}

\begin{proof}[Proof of Corollary \ref{Localized Scaling}]
 The corollary follows from Theorem \ref{thrmgeneral} by setting 
\begin{align*}
 P_N(x):=\mu_V(a)^{-N}P_{N,V,f}\lb a+\frac{x_1}{\mu_V(a)},\dots,a+\frac{x_N}{\mu_V(a)}\rb.
\end{align*}
The case of repulsive particles is analogous.
By the two-fold application of Hadamard's inequality (cf. \eqref{Hadamard} and \eqref{Hadamard2}) and the linearization method, it 
suffices to show
\begin{align*}
 \frac{1}{N\mu_V(a)}K_{N,V,f}\lb a+\frac{t}{N\mu_V(a)},a+\frac{s}{N\mu_V(a)}\rb=\frac{\sin(\pi(t-s))}{\pi(t-s)}+\O\lb \frac{1+\lv 
t\rv+\lv s\rv}{N}\rb.
\end{align*}
This is precisely the statement of \cite[Theorem 1.8]{KSSV}, which can also easily be deduced from \eqref{bulk-formula}. Here we 
used again that the $f$-dependence in the scaling can be neglected.
\end{proof}

\begin{proof}[Proof of Corollary \ref{cor_intensity}]
 A careful reading of the proof of Lemma \ref{pointwise} shows that we have for any $s,L$ and any $\e>0$
\begin{align*}
 &\left\lv\int_0^s\frac{1}{\lv I_N\rv}d\E_N\s(I_N,\tilde{x}) - G(s)\right\rv\\
&\leq \sum_{k=2}^L\left[ \frac{s^kC_0^k}{(k-1)!}  \mathcal{O}\left(\frac{1}{|I_N|}\right)  +  
\frac{s^{k-1}   C_0^kk^{k/2+1}}{(k-1)!N^{1-\e}}\right]+E(s,L)+E(s,L+1),
\end{align*}
where $E(s,k)$ has been defined in \eqref{def_E}. Choosing $L\in\mathbb{N}$ such 
that $\sqrt{\log \lv I_N\rv}=o(L)$ and $L=o(\log \lv I_N\rv)$, we get similarly as in the proof of Lemma \ref{pointwise}
\begin{align*}
 &\left\lv\int_0^s\frac{1}{\lv I_N\rv}d\E_N\s(I_N,\tilde{x}) - G(s)\right\rv=\O(\lv I_N\rv^{-1+\e'})
\end{align*}
for any $\e'>0$, where the $\O$ term is uniform in $0<s=\O(\sqrt{\log \lv I_N\rv})$. Thus it remains to see that taking the 
supremum over $[0,\O(\sqrt{\log \lv I_N\rv}))$ is sufficient. It follows from the sub-Gaussian tails of $G$ that $1-G(\O(\sqrt{\log 
\lv I_N\rv}))=\O(\lv I_N\rv^{-1})$ (cf. \eqref{deltabound}). From \eqref{firstmoment} we know that the expected total mass of $\lv 
I_N\rv^{-1}\s(I_N,\tilde{x})$ is $1+\O(\lv I_N\rv^{-1+\e})$ for any $\e>0$ and hence the uniform approximation on $[0,\O(\sqrt{\log 
\lv I_N\rv}))$ gives that 
\begin{align*}
 \int_{\O(\sqrt{\log \lv I_N\rv})}^\infty \frac{1}{\lv I_N\rv}d\E_N\s(I_N,\tilde{x})=\O(\lv I_N\rv^{-1+\e'})
\end{align*}
for any $\e'>0$. This proves the corollary.
\end{proof}

\section*{Acknowledgement}
The second author would like to thank S. Jansen for pointing out that the Gaussian linearization \eqref{e6} is known in 
mathematical physics as the Sine-Gordon transformation.

\bibliographystyle{alpha}
\bibliography{bibliography}

\end{document}